\documentclass{amsart}
\usepackage{amsmath,amsfonts,amssymb,amsthm}
\usepackage{hyperref}

\usepackage{color}

\theoremstyle{plain}
\newtheorem{theorem}{Theorem}[section]
\newtheorem{proposition}{Proposition}[section]
\newtheorem{claim}{Claim}[section]

\newtheorem{corollary}{Corollary}[section]

\theoremstyle{definition}
\newtheorem{definition}{Definition}[section]

\newtheorem*{remark*}{Remark}
\newtheorem*{conjecture*}{Conjecture}
\newtheorem*{problem*}{Problem}

\newcommand{\A}{\mathcal{A}}
\newcommand{\B}{\mathcal{B}}
\newcommand{\C}{\mathcal{C}}
\newcommand{\D}{\mathcal{D}}

\newtheorem*{pullback}{Pullback Theorem}{\bfseries}{\itshape}
\newtheorem*{proposition*}{Proposition}{\bfseries}{\itshape}

\begin{document}
\title{Computable embeddings for pairs of linear orders}
% \thanks{The first author was partially supported by NSF Grant DMS \#1600625, which allowed him to visit Sofia. The second and
%   third authors were partially supported by BNSF Bilateral Grant DNTS/Russia 01/8 from 23.06.2017. This paper is an extended and
%   revised version of~\cite{BGV19CiE}.}
\thanks{The first author was partially supported by NSF Grant DMS \#1600625, which allowed him to visit Sofia University. The first author was also supported by RFBR, project number 20-31-70006. 
  The second and third authors were partially supported by the Bulgarian National Science Fund through the project “Models of computability”, DN-02-16/19.12.2016.
  This paper is an extended and revised version of~\cite{BGV19CiE}.}

\author{Nikolay Bazhenov} 
\author{Hristo Ganchev} 
\author{Stefan Vatev}

\address{Sobolev Institute of Mathematics, Novosibirsk, Russia\ \and\ 
  Novosibirsk State University, Novosibirsk, Russia}
\email{bazhenov@math.nsc.ru}

\address{Department of Mathematical Logic\\
Sofia University\\
Bulgaria}
\email{ganchev@fmi.uni-sofia.bg}

\address{Department of Mathematical Logic\\
Sofia University\\
Bulgaria}
\email{stefanv@fmi.uni-sofia.bg}

\maketitle              

\begin{abstract}
  We study computable embeddings for pairs of structures, i.e. for classes containing precisely two non-isomorphic structures.
  Surprisingly, even for some pairs of simple linear orders, computable embeddings induce a non-trivial degree structure.
  Our main result shows that $\{\omega \cdot k,\omega^\star \cdot k\}$ is computably embeddable in $\{\omega \cdot t, \omega^\star \cdot t\}$ iff $k$ divides $t$.
\end{abstract}

\section{Introduction}

We study computability-theoretic complexity for classes of countable structures. A widely used approach to investigating algorithmic complexity involves comparing different classes of structures by using a particular notion of \emph{reduction} between classes. Examples of such reductions include computable embeddings \cite{CCKM04,CKM07}, Turing computable embeddings \cite{FKMQS11,KMV07}, $\Sigma$-reducibility \cite{EPS-11,Puz-09}, computable functors \cite{HMMM-17,MPSS-18}, enumerable functors \cite{Rossegger}, etc. If a class $\mathcal{K}_0$ is reducible to a class $\mathcal{K}_1$ and there is no reduction from $\mathcal{K}_1$ into $\mathcal{K}_0$, then one can say that $\mathcal{K}_1$ is computationally ``harder'' than $\mathcal{K}_0$. In a standard com\-pu\-ta\-bi\-li\-ty-theoretic way, a particular reduction gives rise to the corresponding degree structure on classes.
Nevertheless, note that there are other ways to compare com\-pu\-ta\-bi\-li\-ty-the\-o\-re\-tic complexity of two classes of structures, see, e.g., \cite{HKSS02,Miller}.

Friedman and Stanley~\cite{FS89} introduced the notion of \emph{Borel embedding} to compare complexity of the classification problems for classes of countable structures. Calvert, Cummins, Knight, and Miller \cite{CCKM04} (see also \cite{KMV07}) developed two notions, \emph{computable embeddings} and \emph{Turing computable embeddings}, as effective counterparts of Borel embeddings. The formal definitions of these embeddings are given in Section~\ref{sect:prelim}. Note that if there is a computable embedding from $\mathcal{K}_0$ into $\mathcal{K}_1$, then there is also a Turing computable embedding from $\mathcal{K}_0$ into $\mathcal{K}_1$. The converse is not true, see Section~\ref{sect:prelim} for details.

In this paper, we follow the approach of~\cite{CCKM04} and study computable embeddings for pairs of structures, i.e. for classes $\mathcal{K}$ containing precisely two non-isomorphic structures. Our motivation for investigating pairs of structures is two-fold. First, these pairs play an important role in computable structure theory. The technique of pairs of computable structures, which was developed by Ash and Knight \cite{AK90,AK00}, found many applications in studying various computability-theoretic properties of structures (in particular, their degree spectra and effective categoricity, see, e.g., \cite{AK00,Bazh-MS,GHKMMS}). 

Second, pairs of computable structures constitute the simplest case, which is significantly different from the case of one-element classes: It is not hard to show that for any computable structures $\mathcal{A}$ and $\mathcal{B}$, the one-element classes $\{ \mathcal{A}\}$ and $\{ \mathcal{B}\}$ are equivalent with respect to computable embeddings. On the other hand, our results will show that computable embeddings induce a non-trivial degree structure for two-element classes consisting of computable structures. %{\color{blue} 
The reader is referred to Section~\ref{subsect:background} for a further discussion of our motivation.%}

In this paper, we concentrate on the pair of linear orders $\omega$  and $\omega^{\star}$.
We denote by $\deg_{tc}(\{\omega, \omega^{\star}\})$ the degree of the class $\{ \omega, \omega^{\star}\}$ under Turing computable embeddings (to be defined in Section~\ref{sect:prelim}). Quite unexpectedly, it turned out that a seemingly simple problem of studying computable embeddings for classes from $\deg_{tc}(\{\omega, \omega^{\star}\})$ requires developing new techniques. 

The outline of the paper is as follows. Section~\ref{sect:prelim} contains the necessary preliminaries. In
Section~\ref{sect:tc-deg} we give a necessary and sufficient condition for a pair of structures $\{ \mathcal{A}, \mathcal{B}\}$ to
belong to $\deg_{tc}(\{ \omega, \omega^{\star}\})$. In Section~\ref{sect:top:pair} we show that the pair $\{1+\eta,\eta + 1\}$ is
the greatest element inside $\deg_{tc}(\{ \omega, \omega^{\star}\})$ with respect to computable embeddings.
In Section~\ref{sect:infinite:chain} we prove the following result: inside $\deg_{tc}(\{ \omega, \omega^{\star}\})$, there is an infinite chain of degrees induced by computable embeddings.
In the remaining of the paper, we use the techniques from Section~\ref{sect:infinite:chain} to give a characterization of the
degrees of pairs of the form $\{\omega \cdot k,\omega^\star \cdot k\}$ induced by computable embeddings.
We show that $\{\omega \cdot k,\omega^\star \cdot k\}$ is computably embeddable in $\{\omega \cdot t, \omega^\star \cdot t\}$ iff $k$ divides $t$.

%%%%%%%%%%%%%%%%%%

\section{Preliminaries}\label{sect:prelim}

We consider only computable languages, and structures with domain contained in $\omega$. We assume that any considered class of structures $\mathcal{K}$ is closed under isomorphism, modulo the restriction on domains. In addition, we assume that all structures from $\mathcal{K}$ have the same language. For a structure $\mathcal{S}$, $D(\mathcal{S})$ denotes the atomic diagram of $\mathcal{S}$. We will often identify a structure and its atomic diagram.

Let $\mathcal{K}_0$ be a class of $L_0$-structures, and $\mathcal{K}_1$ be a class of $L_1$-structures. In the definition below, we use the following convention: An \emph{enumeration operator} $\Gamma$ is treated as a c.e. set of pairs $(\alpha,\varphi)$, where $\alpha$ is a finite set of atomic $(L_0\cup \omega)$-sen\-ten\-ces, and $\varphi$ is a atomic $(L_1\cup\omega)$-sentence.

\begin{definition}[{\cite{CCKM04,KMV07}}]
  An enumeration operator $\Gamma$ is a \emph{computable embedding} of $\mathcal{K}_0$ into $\mathcal{K}_1$, denoted by $\Gamma\colon \mathcal{K}_0 \leq_c \mathcal{K}_1$, if $\Gamma$ satisfies the following:
  \begin{enumerate}
  	\item For any $\mathcal{A}\in \mathcal{K}_0$, $\Gamma(\mathcal{A})$ is the atomic diagram of a structure from $\mathcal{K}_1$.

  	\item For any $\mathcal{A},\mathcal{B}\in \mathcal{K}_0$, we have $\mathcal{A}\cong\mathcal{B}$ if and only if $\Gamma(\mathcal{A}) \cong \Gamma(\mathcal{B})$.
  \end{enumerate}	
\end{definition}

Any computable embedding has the important property of \emph{monotonicity}: If $\Gamma\colon \mathcal{K}_0 \leq_c \mathcal{K}_1$ and $\mathcal{A}\subseteq \mathcal{B}$ are structures from $\mathcal{K}_0$, then we have $\Gamma(\mathcal{A}) \subseteq \Gamma(\mathcal{B})$ \cite[Proposition 1.1]{CCKM04}.

\begin{definition}[{\cite{CCKM04,KMV07}}]
  A Turing operator $\Phi=\varphi_e$ is a \emph{Turing computable embedding} of $\mathcal{K}_0$ into $\mathcal{K}_1$, denoted by $\Phi\colon \mathcal{K}_0 \leq_{tc} \mathcal{K}_1$, if $\Phi$ satisfies the following:
  \begin{enumerate}
  	\item For any $\mathcal{A}\in \mathcal{K}_0$, the function $\varphi^{D(\mathcal{A})}_e$ is the characteristic function of the atomic diagram of a structure from $\mathcal{K}_1$. This structure is denoted by $\Phi(\mathcal{A})$.

  	\item For any $\mathcal{A},\mathcal{B}\in \mathcal{K}_0$, we have $\mathcal{A}\cong\mathcal{B}$ if and only if $\Phi(\mathcal{A}) \cong \Phi(\mathcal{B})$.
  \end{enumerate}	
\end{definition}

\begin{proposition*}[Greenberg and, independently, Kalimullin; see \cite{Kal-18,KMV07}]
  If $\mathcal{K}_0 \leq_c \mathcal{K}_1$, then $\mathcal{K}_0 \leq_{tc} \mathcal{K}_1$. The converse is not true.
\end{proposition*}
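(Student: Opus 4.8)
The statement has two independent halves, and the plan is to treat them in turn.

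For the implication $\mathcal{K}_0 \leq_c \mathcal{K}_1 \Rightarrow \mathcal{K}_0 \leq_{tc} \mathcal{K}_1$, I would convert a given computable embedding into a Turing computable embedding by hand. Suppose $\Gamma$ is presented as a c.e.\ set of pairs $(\alpha,\varphi)$, and for $\mathcal{A}\in\mathcal{K}_0$ write $\Gamma(D(\mathcal{A})) = \{\varphi : (\alpha,\varphi)\in\Gamma \text{ for some } \alpha\subseteq D(\mathcal{A})\}$. I define a Turing operator $\Phi$ that, on oracle $D(\mathcal{A})$ and input a basic $(L_1\cup\omega)$-sentence $\psi$, enumerates $\Gamma(D(\mathcal{A}))$ (each test $\alpha\subseteq D(\mathcal{A})$ costs only finitely many oracle queries) and waits until either $\psi$ or its negation $\neg\psi$ appears, outputting $1$ in the first case and $0$ in the second. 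The point that makes $\Phi$ total is clause (1) of the definition of computable embedding: $\Gamma(D(\mathcal{A}))$ is the atomic diagram of a structure $\mathcal{S}\in\mathcal{K}_1$, hence a complete and consistent set of basic sentences, so for every $\psi$ exactly one of $\psi,\neg\psi$ is eventually enumerated and the search halts. Thus $\varphi_e^{D(\mathcal{A})}$ is the characteristic function of $D(\mathcal{S})$ and $\Phi(\mathcal{A})=\Gamma(\mathcal{A})$ as structures; clause (1) for $\Phi$ is immediate and clause (2) is inherited verbatim, since $\Phi(\mathcal{A})\cong\Phi(\mathcal{B}) \iff \Gamma(\mathcal{A})\cong\Gamma(\mathcal{B}) \iff \mathcal{A}\cong\mathcal{B}$. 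This direction is routine; the only subtlety is the completeness-gives-termination observation, together with the usual convention that the domain of $\mathcal{S}$ is read off the enumerated diagram so that sentences mentioning non-domain elements are decided as well.

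For the converse I must exhibit classes with $\mathcal{K}_0 \leq_{tc} \mathcal{K}_1$ but $\mathcal{K}_0 \not\leq_c \mathcal{K}_1$, and the whole point is the \emph{monotonicity} of computable embeddings recalled in the excerpt: a Turing operator may consult negative atomic facts and may define $\Phi(\mathcal{A})$ and $\Phi(\mathcal{B})$ incoherently, whereas $\Gamma(\mathcal{A})\subseteq\Gamma(\mathcal{B})$ is forced whenever $\mathcal{A}\subseteq\mathcal{B}$. So I would seek a source $\mathcal{K}_0=\{\mathcal{A},\mathcal{B}\}$ in which a copy of $\mathcal{A}$ is a substructure of a copy of $\mathcal{B}$, mapped into a target $\mathcal{K}_1$ whose two members admit \emph{no} substructure embedding in either direction. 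The pair $\{\omega,\omega^{\star}\}$ is ideal as the target: $\omega$ has no infinite descending chain and $\omega^{\star}$ no infinite ascending chain, so neither order-embeds into the other, i.e.\ no copy of $\omega$ is a substructure of a copy of $\omega^{\star}$ and conversely. Granting such a source, suppose $\Gamma\colon\mathcal{K}_0 \leq_c \{\omega,\omega^{\star}\}$. As $\mathcal{A}\not\cong\mathcal{B}$, clause (2) forces $\Gamma(\mathcal{A})$ and $\Gamma(\mathcal{B})$ to realize the two distinct types $\omega$ and $\omega^{\star}$; fixing copies $\mathcal{A}'\subseteq\mathcal{B}'$ with $\mathcal{A}'\cong\mathcal{A}$ and $\mathcal{B}'\cong\mathcal{B}$, monotonicity \cite{CCKM04} gives $\Gamma(\mathcal{A}')\subseteq\Gamma(\mathcal{B}')$, so a copy of one of $\omega,\omega^{\star}$ is a substructure of a copy of the other, a contradiction. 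Hence $\mathcal{K}_0\not\leq_c\{\omega,\omega^{\star}\}$, and this half is entirely self-contained.

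It remains to pin down a concrete source that embeds one way and still satisfies $\mathcal{K}_0 \leq_{tc} \{\omega,\omega^{\star}\}$, and the natural candidate is $\mathcal{K}_0=\{1+\eta,\eta+1\}$. These are non-isomorphic (one has a least and no greatest element, the other the reverse), yet each order-embeds into the other through the universal order $\eta$; in particular a copy of $1+\eta$ is realizable as a substructure of a copy of $\eta+1$, supplying exactly the nesting the obstruction above requires. The remaining, genuinely hard step is to verify $\{1+\eta,\eta+1\} \leq_{tc} \{\omega,\omega^{\star}\}$: a Turing operator must read the full diagram of a presented copy and output a copy of $\omega$ or of $\omega^{\star}$ according to whether the distinguished endpoint sits at the bottom or the top, an essentially non-monotone construction exploiting negative information that no enumeration operator can mimic, which is precisely why the two reducibilities diverge. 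I expect this tc-embedding to be the main obstacle, since the endpoint feature is not decidable from a presentation; a separation of this kind is established in \cite{KMV07,Kal-18}, and the machinery developed later in this paper for $\deg_{tc}(\{\omega,\omega^{\star}\})$ provides an alternative route to it.
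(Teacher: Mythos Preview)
The paper does not prove this proposition; it is cited as a known result due to Greenberg and Kalimullin, with no argument supplied in the text. There is therefore no ``paper's own proof'' to compare your attempt against.

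That said, your approach is correct. The forward implication is routine, and your termination-via-completeness justification is the right one. For the failure of the converse, your counterexample $\{1+\eta,\eta+1\}$ versus $\{\omega,\omega^\star\}$ works exactly as you describe: the monotonicity obstruction is sound (neither of $\omega$, $\omega^\star$ order-embeds into the other, while a copy of $1+\eta$ sits as a substructure inside a copy of $\eta+1$), and the ingredient you flag as the hard step, $\{1+\eta,\eta+1\}\leq_{tc}\{\omega,\omega^\star\}$, is delivered by Theorem~\ref{theo:description} of this very paper, since $1+\eta$ and $\eta+1$ are computable, $\exists$-equivalent, and separated by the $\Sigma_2$ sentences ``there is a least element'' and ``there is a greatest element''. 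Your choice of example is in fact especially well adapted to this paper: Section~\ref{sect:top:pair} shows $\{1+\eta,\eta+1\}$ is the $\leq_c$-greatest pair in $\deg_{tc}(\{\omega,\omega^\star\})$, and the strict chain below it produced in Section~\ref{sect:infinite:chain} gives a second, independent route to $\{1+\eta,\eta+1\}\not\leq_c\{\omega,\omega^\star\}$.
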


Both relations $\leq_c$ and $\leq_{tc}$ are preorders. If $\mathcal{K}_0 \leq_{tc} \mathcal{K}_1$ and $\mathcal{K}_1\leq_{tc} \mathcal{K}_0$, then we say that $\mathcal{K}_0$ and $\mathcal{K}_1$ are \emph{$tc$-equivalent}, denoted by $\mathcal{K}_0 \equiv_{tc} \mathcal{K}_1$. For a class $\mathcal{K}$, by $\deg_{tc}(\mathcal{K})$ we denote the family of all classes which are $tc$-equivalent to $\mathcal{K}$. Similar notations can be introduced for the $c$-reducibility.

For $L$-structures $\mathcal{A}$ and $\mathcal{B}$, we say that $\mathcal{A}\equiv_1 \mathcal{B}$ if $\mathcal{A}$ and $\mathcal{B}$ satisfy the same $\exists$-sen\-ten\-ces. Let $\alpha$ be a computable ordinal. The formal definition of a \emph{computable $\Sigma_{\alpha}$ formula} (or a \emph{$\Sigma^c_{\alpha}$ formula}, for short) can be found in \cite[Chap. 7]{AK00}. By $\Sigma^c_{\alpha}$-$Th(\mathcal{A})$, we denote the set of all $\Sigma^c_{\alpha}$ sentences which are true in $\mathcal{A}$. 

Note that in this paper we will use $\Sigma^{c}_{\alpha}$ formulas only for $\alpha \leq 2$. Informally, the class of $\Sigma^c_2$ formulas can be described as follows. A \emph{$\Pi^c_1$ formula} is a c.e. conjunction of finitary $\forall$-formulas:
\[
	\xi(\bar y) = \bigwedge_{j\in W} \forall \bar z_j \theta_j (\bar y, \bar z_j),
\]
where the set $W$ is c.e. and every $\theta_j$ is a quantifier-free $L$-formula. A \emph{$\Sigma^c_2$ formula} is a c.e. disjunction of the form:
\[
	\psi(\bar x) = \bigvee_{i \in V} \exists \bar y_i \xi_i (\bar x, \bar y_i),
\]
where $V$ is c.e. and every $\xi_i$ is a $\Pi^c_1$ formula.

\begin{pullback}[Knight, Miller, and Vanden Boom~\cite{KMV07}]
  Suppose that $\Phi: \mathcal{K}_1\leq_{tc}\mathcal{K}_2$. Then for any computable infinitary sentence $\psi_2$ in the language of $\mathcal{K}_2$, one can effectively find a computable infinitary sentence $\psi_1$ in the language of $\mathcal{K}_1$ such that for all $\mathcal{A}\in \mathcal{K}_1$, we have $\mathcal{A} \models \psi_1$ if and only if $\Phi(\mathcal{A}) \models \psi_2$. Moreover, for a non-zero $\alpha <\omega^{CK}_1$, if $\psi_2$ is a $\Sigma^c_{\alpha}$ formula, then so is $\psi_1$.
\end{pullback}

When we work with pairs of structures, we use the following convention: Suppose that $\Gamma$ is a (Turing) computable embedding
from a class $\{ \mathcal{A}, \mathcal{B}\}$ into a class $\{ \mathcal{C}, \mathcal{D}\}$. Then for convenience, we always assume
that $\mathcal{A}$ and $\mathcal{B}$ are not isomorphic, $\Gamma(\mathcal{A}) \cong \mathcal{C}$, and $\Gamma(\mathcal{B})
\cong\mathcal{D}$. Notice also that here we abuse the notations: Formally speaking, we identify the two-element class $\{ \mathcal{A}, \mathcal{B}\}$ with the family containing all isomorphic copies of $\mathcal{A}$ and $\mathcal{B}$.

\subsection{Further background} \label{subsect:background}
%{\color{blue}
	The Pullback Theorem and its consequences show that sometimes $tc$-em\-bed\-dings are too coarse: they \emph{cannot see}
finer structural distinctions between classes. One of the first examples of this phenomenon was provided by Chisholm, Knight, and
Miller~\cite{CKM07}: Let $VS$ be the class of infinite $\mathbb{Q}$-vector spaces, and let $ZS$ be the class of models of the
theory $\mathrm{Th}(\mathbb{Z},S)$, where $(\mathbb{Z},S)$ is the integers with successor. Then $VS$ and $ZS$ are equivalent with
respect to $tc$-em\-bed\-dings, but there is no computable embedding from $VS$ to $ZS$. 

Another example of this intriguing phenomenon was obtained by Ganchev, Ka\-li\-mul\-lin and Vatev~\cite{GKV}. For a structure $\A$, let $\tilde\A$ be the enrichment of $\A$ with a congruence relation $\sim$ such that every
congruence class in $\tilde\A$ is infinite and $\tilde\A /_\sim \cong \A$.
Then they showed that the class $\{\omega_S,\omega^\star_S\}$ is $tc$-equivalent to the class $\{\tilde\omega_S, \tilde\omega^\star_S\}$,
whereas $\{\tilde\omega_S, \tilde\omega^\star_S\}$ is not computably embeddable into $\{\omega_S,\omega^\star_S\}$.
Here $\omega_S$ and $\omega^\star_S$ are linear orderings of type $\omega$ and $\omega^\star$, respectively, together with the
successor relation. 

Our paper is focused on the degree $\deg_{tc}(\{ \omega,\omega^{\star}\})$. Historically speaking, the choice of this particular degree was motivated by the following open question:

\begin{problem*}[Kalimullin]
	It is easy to show that the pairs $\{ \omega, \omega^{\star}\}$ and $\{ \tilde\omega, \tilde\omega^{\star}\}$ are $tc$-equivalent. Moreover, $\{ \omega, \omega^{\star}\} \leq_c \{ \tilde\omega, \tilde\omega^{\star}\}$. Is there a computable embedding from $\{ \tilde\omega, \tilde\omega^{\star}\}$ to $\{ \omega, \omega^{\star}\}$?
\end{problem*}

One can attack the problem via employing model-theoretic properties of the structures (in a way similar to~\cite{CKM07}). In particular, a naive way to distinguish these pairs would be the following. Each of the orders $\omega$ and $\omega^{\star}$ is rigid, while both $\tilde\omega$ and $\tilde\omega^{\star}$ have continuum many automorphisms. Maybe, this fact can help us to prove that $\{ \tilde\omega, \tilde\omega^{\star}\} \nleq_c \{ \omega, \omega^{\star}\}$? Nevertheless, this is \emph{not} the case~--- one can show that $\{ \tilde\omega, \tilde\omega^{\star}\} \equiv_c \{ (\omega^2, B), (\omega\cdot\omega^{\star},B) \}$, where $B$ is the standard block relation on a linear order. Since the structures $(\omega^2, B)$ and $(\omega\cdot\omega^{\star},B)$ are both rigid, it seems that studying automorphism groups does not help in this setting. 

We note that recently, the theory of Turing computable embeddings found applications in algorithmic learning theory. Section~3.2 of~\cite{BFS-arxiv} establishes connections between $tc$-embeddings and a particular paradigm of learnability for classes of countable structures. Informally speaking, this paradigm employs a learner whose goal is, given the atomic diagram of a structure $\mathcal{A}$, to learn the isomorphism type of $\mathcal{A}$. The learner is allowed to use both positive and negative data provided by the atomic diagram. Remarkably, the family $\{\omega, \omega^{\star}\}$ is learnable by a computable learner. These facts lead to the following problem: it would be interesting to study connections between computable embeddings and algorithmic learning (specifically, topological aspects of learnability~--- see, e.g., \cite{dBY}).
%}

%%%%%%%%%%%%%%%%

\section{The $tc$-degree of $\{ \omega, \omega^{\star}\}$}\label{sect:tc-deg}

In this section, we give a characterization of the $tc$-degree for the class $\{\omega,\omega^{\star}\}$:

\begin{theorem}\label{theo:description}
	Let $\mathcal{A}$ and $\mathcal{B}$ be infinite $L$-structures such that $\mathcal{A}\not\cong\mathcal{B}$. Then the following conditions are equivalent:
	\begin{itemize}
		\item[(i)] $\{ \mathcal{A}, \mathcal{B}\} \equiv_{tc} \{ \omega, \omega^{\star}\}$.
		
		\item[(ii)] Both $\mathcal{A}$ and $\mathcal{B}$ are computably presentable, $\mathcal{A}\equiv_1 \mathcal{B}$, 
		\begin{equation}\label{equ:001-theories}
			\Sigma^c_2\text{-}Th(\mathcal{A}) \smallsetminus \Sigma^c_2\text{-}Th(\mathcal{B}) \neq \emptyset, \text{~and~} \Sigma^c_2\text{-}Th(\mathcal{B}) \smallsetminus \Sigma^c_2\text{-}Th(\mathcal{A}) \neq \emptyset.
		\end{equation}
	\end{itemize}
\end{theorem}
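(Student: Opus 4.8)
The plan is to prove both implications, with the bulk of the work in (ii)$\Rightarrow$(i). Throughout I use that $\omega$ and $\omega^\star$ have the same existential theory (any finite chain embeds into both), so that $\omega\equiv_1\omega^\star$ and in fact they agree on all $\Sigma^c_1$ sentences, whereas they first diverge one level up: the $\Sigma^c_2$ sentence $\lambda:=\exists x\,\forall y\,(x\le y)$ (``there is a least element'') holds in $\omega$ but not $\omega^\star$, and dually $\rho:=\exists x\,\forall y\,(y\le x)$ holds in $\omega^\star$ but not $\omega$.

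For (i)$\Rightarrow$(ii), fix $\Phi\colon\{\omega,\omega^\star\}\le_{tc}\{\mathcal{A},\mathcal{B}\}$ and $\Psi\colon\{\mathcal{A},\mathcal{B}\}\le_{tc}\{\omega,\omega^\star\}$, say with $\Phi(\omega)\cong\mathcal{A}$, $\Phi(\omega^\star)\cong\mathcal{B}$, and $\Psi(\mathcal{A})\cong\omega$, $\Psi(\mathcal{B})\cong\omega^\star$. Applying a Turing operator to the computable diagrams of $\omega$ and $\omega^\star$ yields computable diagrams, so $\mathcal{A}$ and $\mathcal{B}$ are computably presentable. For the other two points I use the Pullback Theorem. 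If some $\Sigma^c_1$ (in particular, some $\exists$-) sentence separated $\mathcal{A}$ from $\mathcal{B}$, pulling it back through $\Phi$ would produce a $\Sigma^c_1$ sentence separating $\omega$ from $\omega^\star$, contradicting $\omega\equiv_1\omega^\star$; hence $\mathcal{A}\equiv_1\mathcal{B}$. Finally, pulling $\lambda$ and $\rho$ back through $\Psi$ yields $\Sigma^c_2$ sentences $\lambda',\rho'$ with $\mathcal{A}\models\lambda'$, $\mathcal{B}\not\models\lambda'$ and $\mathcal{B}\models\rho'$, $\mathcal{A}\not\models\rho'$, which is exactly (\ref{equ:001-theories}).

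The heart of (ii)$\Rightarrow$(i) is the following observation. Given $\phi_{\mathcal{A}}\in\Sigma^c_2\text{-}Th(\mathcal{A})\smallsetminus\Sigma^c_2\text{-}Th(\mathcal{B})$ and $\phi_{\mathcal{B}}\in\Sigma^c_2\text{-}Th(\mathcal{B})\smallsetminus\Sigma^c_2\text{-}Th(\mathcal{A})$, the predicate ``the input is a copy of $\mathcal{A}$'' is decidable in the limit relative to the input diagram $D$. Writing each $\Sigma^c_2$ sentence as $\bigvee_k\exists\bar{x}\,\pi_k(\bar{x})$ with $\pi_k$ a $\Pi^c_1$ formula, I run, uniformly in $D$, verification attempts for all pairs (disjunct, witness tuple): an attempt stays alive as long as no counterexample to its $\Pi^c_1$ part has appeared. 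The least index of a still-alive attempt is nondecreasing; it converges to a finite value exactly when the sentence is true and tends to infinity otherwise. Racing the attempts for $\phi_{\mathcal{A}}$ against those for $\phi_{\mathcal{B}}$ then yields a $D$-computable $\{0,1\}$-valued guess $g(s)$ whose limit is $1$ when the input is $\cong\mathcal{A}$ and $0$ when it is $\cong\mathcal{B}$. The crucial point — and the reason (\ref{equ:001-theories}) demands a $\Sigma^c_2$ separation in \emph{both} directions — is that a one-sided difference only makes this predicate $\Sigma^0_2(D)$, whereas having both sides makes it $\Delta^0_2(D)$, hence approximable by a convergent guess.

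This reduces both embeddings to one limit-driven template; the symmetric fact that ``the input linear order is $\cong\omega$'' is likewise $\Delta^0_2$ (race $\lambda$ against $\rho$) handles the reverse direction. For $\{\mathcal{A},\mathcal{B}\}\le_{tc}\{\omega,\omega^\star\}$ I build a linear order by adding, at stage $s$, a new greatest element if $g(s)=1$ and a new least element if $g(s)=0$; since $g$ changes value only finitely often, a finite initial segment is followed either by an $\omega$-tower of new maxima (giving $\mathrm{finite}+\omega=\omega$ when the input is $\mathcal{A}$) or by an $\omega^\star$-tower of new minima (giving $\omega^\star+\mathrm{finite}=\omega^\star$ when the input is $\mathcal{B}$). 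For $\{\omega,\omega^\star\}\le_{tc}\{\mathcal{A},\mathcal{B}\}$ I fix computable copies of $\mathcal{A}$ and $\mathcal{B}$ and build the output by copying $\mathcal{A}$ while the $\omega$-guess reads $1$ and copying $\mathcal{B}$ while it reads $0$, re-embedding the finite configuration built so far each time the guess switches. Here $\mathcal{A}\equiv_1\mathcal{B}$ is precisely what is needed: the finite atomic diagram accumulated so far witnesses a true existential statement about the structure being copied, hence is realizable in the other one, so the switch can always be carried out. I expect the main obstacle to be this last construction: the finitely many switches must be organized so that after the final switch the \emph{entire} target structure — not merely an initial segment — is enumerated, which requires a true-stage/priority bookkeeping guaranteeing that every element of the eventual target is eventually placed into the output with all its atomic relations correctly decided.
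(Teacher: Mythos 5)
Your proposal is correct and takes essentially the same route as the paper: the (i)$\Rightarrow$(ii) direction is the paper's pullback argument verbatim, your $\Delta^0_2$-racing of verification attempts for $\phi_{\mathcal{A}}$ against $\phi_{\mathcal{B}}$ is exactly the paper's comparison of least $\varphi[s]$- and $\psi[s]$-witnesses, and your switch-and-re-embed construction for $\{\omega,\omega^\star\}\leq_{tc}\{\mathcal{A},\mathcal{B}\}$ is the paper's Proposition~\ref{lem:turing_1+eta}. The only cosmetic difference is at the obstacle you flag at the end: the paper needs no true-stage/priority bookkeeping there, since it fixes exhaustive chains $\mathcal{A}^0\subset\mathcal{A}^1\subset\cdots$ and $\mathcal{B}^0\subset\mathcal{B}^1\subset\cdots$ in advance and, using the computable embedding functions $u,v$ provided by $\mathcal{A}\equiv_1\mathcal{B}$, always extends the current finite structure along these chains, so that after the last switch the output automatically exhausts the target.
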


In order to prove the theorem, first, we establish the following useful fact:

\begin{proposition}\label{lem:turing_1+eta}
  Let $\mathcal{A}$ and $\mathcal{B}$ be infinite computable $L$-structures such that $\mathcal{A}\ncong\mathcal{B}$. If $\mathcal{A} \equiv_1 \mathcal{B}$, then $\{ \omega, \omega^{\star}\} \leq_{tc} \{ \mathcal{A},\mathcal{B}\}$.
\end{proposition}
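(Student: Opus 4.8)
The plan is to construct a single Turing operator $\Phi$ with $\Phi(\mathcal{L})\cong\mathcal{A}$ whenever $\mathcal{L}\cong\omega$ and $\Phi(\mathcal{L})\cong\mathcal{B}$ whenever $\mathcal{L}\cong\omega^{\star}$; by the convention fixed in Section~\ref{sect:prelim} this witnesses $\{\omega,\omega^{\star}\}\leq_{tc}\{\mathcal{A},\mathcal{B}\}$. The only isomorphism-invariant difference between the two admissible inputs that I will exploit is that $\omega$ has a least element while $\omega^{\star}$ does not. Concretely, from the oracle $D(\mathcal{L})$ I compute the \emph{current minimum} function $\mu(s)$, the $<_{\mathcal{L}}$-least element among the domain elements seen by stage $s$. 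If $\mathcal{L}\cong\omega$ then $\mu$ is eventually constant (it settles on the true least element), whereas if $\mathcal{L}\cong\omega^{\star}$ then $\mu$ decreases, and hence changes value, infinitely often. Thus ``$\mu$ stabilizes'' is a predicate computable in $D(\mathcal{L})$ whose truth value depends only on the isomorphism type of $\mathcal{L}$, and it is exactly the flag I use to decide which of $\mathcal{A},\mathcal{B}$ to produce.

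The role of the hypothesis $\mathcal{A}\equiv_1\mathcal{B}$ is the following transfer property: for every finite substructure $\mathcal{F}$ of $\mathcal{A}$, the existential sentence asserting the existence of a tuple with the atomic diagram of $\mathcal{F}$ holds in $\mathcal{A}$, hence in $\mathcal{B}$, so a copy of $\mathcal{F}$ can be located inside $\mathcal{B}$ by an effective search in $D(\mathcal{B})$ (and symmetrically). This lets me \emph{re-home} any finite piece already committed to the structure $\mathcal{S}=\bigcup_s\mathcal{S}_s$ that I am building from one of $\mathcal{A},\mathcal{B}$ into the other. The construction maintains a current target $T\in\{\mathcal{A},\mathcal{B}\}$ together with an embedding of $\mathcal{S}_s$ onto an initial segment of a fixed computable copy of $T$; the default target is $\mathcal{A}$, and at a non-change stage I simply extend $\mathcal{S}$ so as to cover one further element of $\mathcal{A}$. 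Each time $\mu$ changes I perform a $\mathcal{B}$-directed step: I re-home the current finite structure into $\mathcal{B}$ using the transfer property and arrange to cover one more element of $\mathcal{B}$. The intended outcome is that if $\mu$ stabilizes then only finitely many $\mathcal{B}$-steps occur and the tail of the construction completes a full copy of $\mathcal{A}$, while if $\mu$ changes infinitely often then the $\mathcal{B}$-steps recur forever and together exhaust $\mathcal{B}$.

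The main obstacle is verifying that the limit structure is a \emph{genuine} isomorphic copy of the intended target, rather than some hybrid: since a re-homing step generally moves the images of elements that were already placed, a naive alternation between the two targets need not converge to either. I expect to handle this exactly as in the Ash--Knight pairs-of-structures technique, via a movable-marker bookkeeping in which the permanence of each commitment is tied to the stabilization of $\mu$ \emph{below} the corresponding marker. In the $\omega^{\star}$ case the infinite strictly decreasing sequence of successive minima produces a nested family of regions that freeze one after another and jointly exhaust $\mathcal{B}$; in the $\omega$ case the eventual constancy of $\mu$ freezes every marker and forces the remaining construction to fill out $\mathcal{A}$. Once this convergence is established, both defining clauses of a Turing computable embedding are immediate: every admissible input yields a full copy of $\mathcal{A}$ or of $\mathcal{B}$, so $\Phi(\mathcal{L})\in\{\mathcal{A},\mathcal{B}\}$, and because the case split depends only on whether $\mathcal{L}$ has a least element it respects isomorphism, giving $\mathcal{L}\cong\omega\Leftrightarrow\Phi(\mathcal{L})\cong\mathcal{A}$ and hence $\{\omega,\omega^{\star}\}\leq_{tc}\{\mathcal{A},\mathcal{B}\}$.
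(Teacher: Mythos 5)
Your two main ingredients are the right ones and match the paper's: the isomorphism-invariant flag (stabilization of the running minimum $\mu$) and the use of $\mathcal{A}\equiv_1\mathcal{B}$ to effectively embed any finite piece already built into the other target, so that the output grows monotonically and switches never destroy work. The genuine gap is your switching rule. You perform a $\mathcal{B}$-directed step exactly at stages where $\mu$ changes and an $\mathcal{A}$-directed step otherwise. On an input of type $\omega^{\star}$, $\mu$ drops infinitely often but at most stages it does \emph{not} drop, so your construction performs infinitely many $\mathcal{A}$-steps interleaved with infinitely many $\mathcal{B}$-steps. The limit is then an increasing union that is infinitely often extended ``toward $\mathcal{A}$'' and infinitely often re-homed ``toward $\mathcal{B}$,'' and nothing in the sketch shows this union is a copy of $\mathcal{B}$. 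This is precisely the hybrid problem you acknowledge, and the movable-marker gesture does not close it: $\equiv_1$ gives you \emph{some} embedding of the current finite structure into $\mathcal{B}$, but not one extending the embedding chosen at the previous $\mathcal{B}$-step, so successive re-homings need not cohere and the $\mathcal{B}$-embeddings need not converge pointwise. (The Ash--Knight pairs machinery that would handle an honest $\Sigma^0_2$ guess of this kind requires a $\leq_2$ back-and-forth hypothesis between the two structures; here you only have $\equiv_1$, so that theorem is not available as a black box.)

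The paper's construction repairs exactly this point by making the current target a \emph{persistent mode} with a two-sided trigger, rather than a default with a one-sided trigger: while building $\mathcal{A}$, it switches to $\mathcal{B}$ only when the running minimum drops; while building $\mathcal{B}$, it switches back to $\mathcal{A}$ only when the running \emph{maximum} rises. On input $\omega$ the minimum drops only finitely often, and any sojourn in $\mathcal{B}$-mode is ended by a maximum rise (which must occur, since $\omega$ is unbounded above); dually on input $\omega^{\star}$. Hence on every valid input there are only finitely many switches, and after the last one the construction just extends a single canonical chain $\mathcal{A}^{t}\subset\mathcal{A}^{t+1}\subset\cdots$ (or $\mathcal{B}^{t}\subset\mathcal{B}^{t+1}\subset\cdots$), whose union is a copy of the intended structure. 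With finitely many switches no coherence between re-homings is ever needed --- each switch is a single embedding of the current finite structure into the new target, supplied by the computable functions $u,v$ with $\mathcal{A}^{s}\hookrightarrow\mathcal{B}^{u(s)}$ and $\mathcal{B}^{s}\hookrightarrow\mathcal{A}^{v(s)}$ --- and correctness of the limit is immediate. Your argument becomes correct if you replace ``default $\mathcal{A}$, do a $\mathcal{B}$-step at each change of $\mu$'' by this persistent-mode rule.
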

\begin{proof} 
	We build a Turing operator $\Phi\colon \{ \omega, \omega^{\star}\} \leq_{tc} \{ \mathcal{A}, \mathcal{B}\}$.

	We may assume that $dom(\mathcal{A})=dom(\mathcal{B})=\mathbb{N}$. Choose computable sequences of finite structures $(\mathcal{A}^s)_{s\in\omega}$ and $(\mathcal{B}^s)_{s\in\omega}$ such that
	$\mathcal{A}^s\subset \mathcal{A}^{s+1}$, $\mathcal{B}^s\subset \mathcal{B}^{s+1}$, $dom(\mathcal{A}^s) = dom(\mathcal{B}^s) = \{0,1,\dots,s\}$,  $\bigcup_s \mathcal{A}^s = \mathcal{A}$, and $\bigcup_s \mathcal{B}^s = \mathcal{B}$.	 Note that here we use the following convention: If the language $L$ is infinite, then we fix a strongly computable sequence of finite languages $(L^s)_{s\in\omega}$ such that $\bigcup_{s\in\omega} L^s = L$ and $L^s \subset L^{s+1}$ for all $s$. The structures $\mathcal{A}^s$ and $\mathcal{B}^s$ are treated as $L^s$-structures.
        Moreover, since $\mathcal{A} \equiv_1 \mathcal{B}$, we can choose total computable functions $u(s)$ and $v(s)$ such that $\mathcal{A}^s$ can be embedded into $\mathcal{B}^{u(s)}$, and $\mathcal{B}^s$ can be embedded into $\mathcal{A}^{v(s)}$.
        % Since $\mathcal{A} \equiv_1 \mathcal{B}$, we know that such functions exist.
	
	Using the objects described above, one can construct total computable functions $\xi$ and $\psi$ acting from $2^{<\omega}$ to $2^{<\omega}$ such that:
	\begin{itemize}
		\item If a string $\sigma\in 2^{<\omega}$ encodes (the atomic diagram of) a finite structure $\mathcal{C}$ such that $\mathcal{C}\cong \mathcal{A}^s$ and $dom(\mathcal{C}) = \{0,1,\dots,s\}$, then $\xi(\sigma)$ encodes a finite structure $\mathcal{D}$ such that $\mathcal{C}\subseteq \mathcal{D}$, $\mathcal{D}\cong \mathcal{B}^{u(s)}$, and $dom(\mathcal{D}) = \{ 0, 1, \dots,u(s)\}$.
		
		\item The function $\psi$ satisfies a similar condition, where $\mathcal{A}$ and $\mathcal{B}$ need to switch places, and $u(s)$ is replaced by $v(s)$.
	\end{itemize}
	
	Consider a structure $\mathcal{S}$, with domain a subset of $\mathbb{N}$, which is isomorphic either to $\omega$ or to $\omega^{\star}$, but we do not know to which one.
        % The main observation here is that we need to see just a finite part of $\mathcal{S}$ to guess correctly whether $\mathcal{S}$ has a least or a greatest element.
        Assume that $dom(\mathcal{S}) = \{ d_0 <_{\mathbb{N}} d_1 <_{\mathbb{N}} d_2 <_{\mathbb{N}} \dots\}$, where $\leq_{\mathbb{N}}$ is the standard order of natural numbers.
        Our strategy for building the structure $\Phi(\mathcal{S})=\mathcal{C}$ is as follows:
        
        At a stage $s$, we define a finite structure $\mathcal{C}^s$ and a \emph{guess} $g[s] = (\mathcal{G}[s], t[s],$ $l[s], r[s])$, where $\mathcal{G}[s] \in \{ \mathcal{A}, \mathcal{B}\}$, $t[s]\in\omega$, and $l[s],r[s]\in dom(\mathcal{S})$. The meaning behind $g[s]$ is as follows. Assume that $g[s]$ is equal to, say, $(\mathcal{A}, 3, d_2,d_4)$. Then at the stage $s$ we have $\mathcal{C}^s\cong \mathcal{A}^3$. Since we are currently building $\mathcal{A}$, we think that $\mathcal{S}$ is isomorphic to $\omega$, and $d_2$ is the least element of $\mathcal{S}$. If our guess is equal to $(\mathcal{B}, 20, d_2,d_4)$, then our current beliefs are the following: We are building $\mathcal{B}$, $\mathcal{C}^s$ is a copy of $\mathcal{B}^{20}$, $\mathcal{S}\cong \omega^{\star}$, and $d_4$ is the greatest element in $\mathcal{S}$.
	
        At stage $0$, let $\mathcal{C}^0 = \mathcal{A}^0$, and our current guess $g[0]$ is equal to $(\mathcal{A},0,d_0,d_0)$.
	Consider a stage $s+1$ of the construction. Define $l[s+1]$ and $r[s+1]$ as the $\leq_{\mathcal{S}}$-least and the $\leq_{\mathcal{S}}$-greatest elements from the set $\{ d_0,d_1,\dots,d_{s+1}\}$, respectively. Consider two cases.
	
	\emph{Case 1.} Suppose that $\mathcal{G}[s] = \mathcal{A}$. If $l[s+1] <_{\mathcal{S}} l[s]$, then we switch to building $\mathcal{B}$: Using the function $\xi$, we find a structure $\mathcal{C}^{s+1}$ such that $\mathcal{C}^{s}\subseteq \mathcal{C}^{s+1}$ and $\mathcal{C}^{s+1} \cong \mathcal{B}^{u(t[s])}$. We define $\mathcal{G}[s+1] = \mathcal{B}$ and $t[s+1] =  u(t[s])$.
        
	If $l[s+1] \geq_{\mathcal{S}} l[s]$, then extend $\mathcal{C}^s$ to $\mathcal{C}^{s+1}$ such that $\mathcal{C}^{s+1}\cong \mathcal{A}^{t[s]+1}$ and $dom(\mathcal{C}^{s+1}) = \{0,1,\dots,t[s]+1\}$. Set $\mathcal{G}[s+1] = \mathcal{A}$ and $t[s+1] =  t[s] + 1$.
	
	\emph{Case 2.} Suppose that $\mathcal{G}[s] = \mathcal{B}$. If $r[s+1] >_{\mathcal{S}} r[s]$, then switch to building $\mathcal{A}$. Otherwise, continue building $\mathcal{B}$, mutatis mutandis.
	
	It is not hard to show that the described construction gives a Turing operator. If $\mathcal{S}$ is a copy of $\omega$, then consider the number $s_0$ such that $d_{s_0}$ is the $\leq_{\mathcal{S}}$-least element. If $\mathcal{G}[s_0]=\mathcal{A}$, then it is easy to see that $\mathcal{G}[t]=\mathcal{A}$ for all $t\geq s_0$, and $\Phi(\mathcal{S})$ is a copy of $\mathcal{A}$. If $\mathcal{G}[s_0]=\mathcal{B}$, then find the least $s_1>s_0$ such that $d_{s_1} >_{\mathcal{S}} r[s_0]$. Clearly, we have $\mathcal{G}[t]=\mathcal{A}$ for all $t\geq s_1$, and $\Phi(\mathcal{S})\cong \mathcal{A}$. If $\mathcal{S}$ is a copy of $\omega^{\star}$, then, mutatis mutandis, $\Phi(\mathcal{S})$ is isomorphic to $\mathcal{B}$. Proposition~\ref{lem:turing_1+eta} is proved.
\end{proof}

\begin{proof}[Proof of Theorem~\ref{theo:description}] \emph{(i)$\Rightarrow$(ii).} Since $\{ \omega, \omega^{\star}\} \leq_{tc}\{ \mathcal{A}, \mathcal{B}\}$, both $\mathcal{A}$ and $\mathcal{B}$ have computable copies. 
	
	Suppose that $\mathcal{A} \not\equiv_1 \mathcal{B}$. W.l.o.g., one may assume that there is an $\exists$-sentence $\psi$ which is true in $\mathcal{A}$, but not true in $\mathcal{B}$. By applying Pullback Theorem to the reduction $\Phi\colon \{ \omega, \omega^{\star}\} \leq_{tc}\{ \mathcal{A}, \mathcal{B}\}$, we obtain a $\Sigma^c_1$ sentence $\psi^{\star}$ such that $\psi^{\star}$ is true in $\omega$ and false in $\omega^{\star}$. This gives a contradiction, since $\omega \equiv_1 \omega^{\star}$.
	Hence, $\mathcal{A} \equiv_1 \mathcal{B}$.
	
	Consider $\exists\forall$-sentences $\xi_0$ and $\xi_1$ in the language of linear orders which say the following: ``there is a least element'' and ``there is a greatest element,'' respectively. Since $\{\mathcal{A},\mathcal{B}\} \leq_{tc} \{ \omega, \omega^{\star}\}$, one can apply Pullback Theorem to $\xi_0$ and $\xi_1$, and obtain condition~(\ref{equ:001-theories}).

	\emph{(ii)$\Rightarrow$(i).} Proposition~\ref{lem:turing_1+eta} implies that $\{ \omega, \omega^{\star}\} \leq_{tc} \{\mathcal{A},\mathcal{B}\}$. Now we need to build a Turing operator $\Phi\colon \{ \mathcal{A},\mathcal{B}\}\leq_{tc} \{\omega,\omega^{\star}\}$. 
	
	Fix $\Sigma^c_2$ sentences $\varphi$ and $\psi$ such that $\mathcal{A}\models \varphi\ \&\ \neg\psi$ and $\mathcal{B}\models \neg\varphi\ \&\ \psi$. W.l.o.g., one may assume that
	\[
		\varphi = \exists \bar x \bigwedge_{i\in \omega} \forall \bar y_i \varphi_i(\bar x,\bar y_i), \ \ \ \psi = \exists \bar u \bigwedge_{j\in \omega} \forall \bar v_j \psi_j(\bar u,\bar v_j).
	\]
	
	Let $\mathcal{S}$ be a copy of one of the structures $\mathcal{A}$ or $\mathcal{B}$. We give an informal description of how to build the structure $\Phi(\mathcal{S})$. Formal details can be recovered from the proof of Proposition~\ref{lem:turing_1+eta}, mutatis mutandis.
	
	Suppose that $dom(\mathcal{S}) = \{ d_0 <_{\mathbb{N}} d_1 <_{\mathbb{N}} d_2 <_{\mathbb{N}} \dots\}$, where $\leq_{\mathbb{N}}$ is the standard order of natural numbers. By $dom(\mathcal{S})[s]$ we denote the set $\{d_0,d_1,\dots, d_s\}$. We say that a tuple $\bar d$ from $dom(\mathcal{S})[s]$ is a \emph{$\varphi[s]$-witness} if for any $i\leq s$ and any tuple $\bar y_i$ from $dom(\mathcal{S})[s]$, we have $\mathcal{S}\models \varphi_i(\bar d,\bar y_i)$. The notion of a \emph{$\psi[s]$-witness} is defined in a similar way. The order of witnesses is induced by their G{\"o}del numbers.
	
	At a stage $s+1$, we consider the following four cases.
	
	\emph{Case 1.} There are no $\varphi[s+1]$-witnesses and no $\psi[s+1]$-witnesses. Then extend $\Phi(\mathcal{S})[s]$ to $\Phi(\mathcal{S})[s+1]$ by copying (a finite part of) $\omega$. In particular, put into $\Phi(\mathcal{S})[s+1]$ an element which is $\leq_{\Phi(\mathcal{S})}$-greater than every element from $\Phi(\mathcal{S})[s]$.
	
	\emph{Case 2.} There is a $\varphi[s+1]$-witness, and there are no $\psi[s+1]$-witnesses. Proceed as in the previous case.
	
	\emph{Case 3.} There is a $\psi[s+1]$-witness, and there are no $\varphi[s+1]$-witnesses. Extend $\Phi(\mathcal{S})[s]$ to $\Phi(\mathcal{S})[s+1]$ by copying $\omega^{\star}$.
	
	\emph{Case 4.} There are both $\varphi[s+1]$-witnesses and $\psi[s+1]$-witnesses. If the least $\varphi[s+1]$-witness is $\leq_{\mathbb{N}}$-less than the least $\psi[s+1]$-witness, then copy $\omega$. Otherwise, copy $\omega^{\star}$.
	
	It is not difficult to show that the construction gives a Turing operator $\Phi$ with the following properties. If $\mathcal{S}$ is a copy of $\mathcal{A}$, then $\Phi(\mathcal{S})\cong \omega$. If $\mathcal{S}\cong \mathcal{B}$, then $\Phi(\mathcal{S})\cong \omega^{\star}$. Theorem~\ref{theo:description} is proved.
\end{proof}

%%%%%%%%%%%%%%%%%%

\section{The top pair}
\label{sect:top:pair}

Our goal in this section is to prove that among all pairs of linear orders, which are $tc$-equivalent to $\{\omega,\omega^\star\}$, there is a greatest pair
under computable embeddings, namely $\{1+\eta, \eta + 1\}$.

Let us denote by $\mathcal{E}$ the equivalence structure with infinitely many equivalence classes of infinite size and no classes of other size.
By $\mathcal{E}_k$ we will denote the equivalence structure with infinitely many equivalence classes of infinite size and {\em exactly one} equivalence class of size $k$,
and $\hat{\mathcal{E}}_k$ denotes the equivalence structure with infinitely many equivalence classes of infinite size and {\em infinitely many} equivalence classes of size $k$.
It is straightforward to see that $\{\mathcal{E}_1,\mathcal{E}_2\} \leq_c \{\hat{\mathcal{E}}_1, \hat{\mathcal{E}}_2\}$.

\begin{proposition}
  \label{prop:order-to-equivalence}
  Let $\mathcal{L}_1$ and $\mathcal{L}_2$ be two linear orders such that 
  $\mathcal{L}_1$ has a least element and no greatest element, and $\mathcal{L}_2$ has a greatest element and no least element.
  Then $\{\mathcal{L}_1, \mathcal{L}_2\} \leq_c \{\mathcal{E}_1,\mathcal{E}_2\}$.
\end{proposition}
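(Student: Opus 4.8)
The plan is to define a single enumeration operator $\Gamma$ that, uniformly in a copy $\mathcal{S}$ of either $\mathcal{L}_1$ or $\mathcal{L}_2$, builds an equivalence structure by attaching one equivalence class $C_a$ to each element $a$ of $\mathcal{S}$, always using fresh output points so that distinct classes are disjoint. Since $\Gamma$ is an enumeration operator, monotonicity comes for free, so it will suffice to arrange $\Gamma(\mathcal{L}_1)\cong\mathcal{E}_1$ and $\Gamma(\mathcal{L}_2)\cong\mathcal{E}_2$; as $\mathcal{L}_1\not\cong\mathcal{L}_2$ and $\mathcal{E}_1\not\cong\mathcal{E}_2$, both defining clauses of a computable embedding will then follow. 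The whole idea is to make the size of $C_a$ detect, enumerably, whether $a$ is extremal in $\mathcal{S}$ and, if so, of which kind.

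The key observation I would use is that the predicates ``$a$ has an element below it'' and ``$a$ has an element above it'' are each enumerable from the atomic diagram of $\mathcal{S}$ (they are existential facts), hence available to an enumeration operator, whereas extremality itself is not. In $\mathcal{L}_1$ exactly one element (the least) lacks a below-witness while every element has an above-witness; in $\mathcal{L}_2$ exactly one element (the greatest) lacks an above-witness while every element has a below-witness; every other element in either order has both witnesses. I would therefore govern $C_a$ by the following rule: put one point into $C_a$ when $a$ enters the domain; add a second point the first time a below-witness for $a$ appears; and, once both a below-witness and an above-witness for $a$ have appeared, add a new point to $C_a$ at every subsequent stage, ``pumping'' it to infinity. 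Crucially, an above-witness alone never enlarges $C_a$.

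Tracing the three patterns then yields the result. A non-extremal $a$ eventually acquires both witnesses, so $C_a$ is pumped and becomes infinite; the least element of $\mathcal{L}_1$ never receives a below-witness, so $C_a$ is never pumped and, having only its initial point, has size $1$; the greatest element of $\mathcal{L}_2$ never receives an above-witness, so it is never pumped but does collect the below-triggered point, giving size $2$. Because the extremal element is unique in each order and every other element is non-extremal, $\Gamma(\mathcal{L}_1)$ consists of infinitely many infinite classes together with a single class of size $1$, i.e.\ a copy of $\mathcal{E}_1$, while $\Gamma(\mathcal{L}_2)$ consists of infinitely many infinite classes together with a single class of size $2$, i.e.\ a copy of $\mathcal{E}_2$.

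Finally I would verify that $\Gamma$ really is an enumeration operator producing a genuine equivalence-structure diagram: the class-membership facts are enumerated positively, and since classes are never merged, the negations $\neg E(x,y)$ across distinct classes are permanent and may be enumerated monotonically as well. The step requiring the most care — and the place where the argument could break — is the asymmetry between the below- and above-witnesses: one must check that the final size of each finite class is well defined independently of the enumeration order (for an extremal element only one kind of witness ever appears, so the count is forced), and that ``pumping on both witnesses'' makes precisely the non-extremal classes infinite, so that in each case exactly one finite class, of the correct size, survives.
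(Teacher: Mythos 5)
Your proof is correct and is essentially the paper's own construction: one equivalence class attached to each element of the input order, with class size $1$, $2$, or infinite determined by positively enumerable extremality information, so that $\Gamma(\mathcal{L}_1)\cong\mathcal{E}_1$ and $\Gamma(\mathcal{L}_2)\cong\mathcal{E}_2$. The paper phrases the rule by the position (minimum, maximum, or middle) of an element in each finite subdiagram seen so far, which coincides extensionally with your witness rule: a second point appears exactly when a below-witness shows up, and a class is pumped to infinity exactly when both a below- and an above-witness have appeared.
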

\begin{proof}
  Suppose that the input structure $\mathcal{S}$ have domain $\{x_i \mid i < \omega\}$.
  The output structure will have domain a subset of $\{y_{i,j} \mid i,j < \omega \}$.
  The enumeration operator $\Gamma$ works as follows.
  On input the finite atomic diagram
  \[x_{k_0} <_{\mathcal{S}} x_{k_1} <_{\mathcal{S}}\cdots <_{\mathcal{S}} x_{k_n},\] it outputs an \emph{infinite} part of the basic diagram of the  output structure saying 
  \begin{itemize}
  \item 
    The class ${[y_{k_0,0}]}_{\sim}$ has size at least one witnessed by the element $y_{k_0,0}$;
  \item
    The class ${[y_{k_n,0}]}_\sim$ has size at least two witnessed by $y_{k_n,0}$ and $y_{k_n,1}$;
  \item
    For $i=1,\dots,n-1$, the classes ${[y_{k_i,0}]}_{\sim} = \{y_{k_i,j} \mid j < \omega \}$.
  \end{itemize}
  If the input structure has a $<_{\mathcal{S}}$-least element, say $x_{k_0}$, then the equivalence class of $y_{k_0,0}$ will stay with exactly one element,
  and $\Gamma$ will enumerate infinitely many elements in the equivalence class of $y_{k_n,0}$ when it sees as input
  a finite atomic diagram of the form
  $x_{k_0} <_{\mathcal{S}} x_{k_1} <_{\mathcal{S}} \cdots <_{\mathcal{S}} x_{k_n} <_{\mathcal{S}} x_{k_{n+1}}$.
  Similarly, if the input structure has a $<_{\mathcal{S}}$-greatest element, say $x_{k_n}$, then the equivalence class of
  $y_{k_n,0}$ will stay with exactly two elements,
  and $\Gamma$ will enumerate infinitely many elements in the equivalence class of $y_{k_0,0}$ when it sees as input
  a finite atomic diagram of the form
  $x_{k_{n+1}} <_{\mathcal{S}} x_{k_0} <_{\mathcal{S}} x_{k_1} <_{\mathcal{S}} \cdots <_{\mathcal{S}} x_{k_n}$.
\end{proof}
By transitivity of $\leq_c$, we immediately get the following corollary.
\begin{corollary}
  Let $\mathcal{L}_1$ and $\mathcal{L}_2$ be two linear orders such that 
  $\mathcal{L}_1$ has a least element and no greatest element, and $\mathcal{L}_2$ has a greatest element and no least element.
  Then $\{\mathcal{L}_1,\mathcal{L}_2\} \leq_c \{\hat{\mathcal{E}}_1, \hat{\mathcal{E}}_2\}$.
\end{corollary}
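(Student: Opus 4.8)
The plan is to obtain the corollary purely by composing two computable embeddings and invoking transitivity of $\leq_c$. Since both $\leq_c$ and $\leq_{tc}$ are preorders (as recorded in Section~\ref{sect:prelim}), it suffices to factor the desired reduction through a convenient intermediate class, and the natural choice here is $\{\mathcal{E}_1,\mathcal{E}_2\}$.

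First I would apply Proposition~\ref{prop:order-to-equivalence} verbatim. The hypotheses imposed on $\mathcal{L}_1$ and $\mathcal{L}_2$ in the corollary are exactly those of the proposition: $\mathcal{L}_1$ has a least but no greatest element, and $\mathcal{L}_2$ has a greatest but no least element. Hence the proposition yields $\{\mathcal{L}_1,\mathcal{L}_2\}\leq_c\{\mathcal{E}_1,\mathcal{E}_2\}$ with no further work.

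Second, I would use the observation recorded just before the proposition, namely $\{\mathcal{E}_1,\mathcal{E}_2\}\leq_c\{\hat{\mathcal{E}}_1,\hat{\mathcal{E}}_2\}$. Although it is stated as straightforward, for completeness I would justify it via the following enumeration operator: place the output domain in bijection with $\dom(\mathcal{S})\times\omega$ and, on reading the input atomic facts $x\sim y$ and $x\not\sim y$, enumerate $(x,n)\sim(y,n)$ for every $n$, together with $(x,n)\not\sim(y,m)$ whenever $x\not\sim y$ or $n\neq m$. This sends an input class of size $s$ to $\omega$ many output classes each of size $s$. Applied to $\mathcal{E}_k$, which has exactly one finite class (of size $k$) and all other classes infinite, it produces infinitely many finite classes of size $k$ together with infinitely many infinite classes, i.e. a copy of $\hat{\mathcal{E}}_k$; one checks directly that this operator satisfies both clauses of the definition of a computable embedding.

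Composing these two reductions and applying transitivity of $\leq_c$ then gives $\{\mathcal{L}_1,\mathcal{L}_2\}\leq_c\{\hat{\mathcal{E}}_1,\hat{\mathcal{E}}_2\}$, as required. I expect no genuine obstacle in this argument: all the real content lives in Proposition~\ref{prop:order-to-equivalence}, and the corollary is a formal consequence of it via the preorder property. The only points that demand any care are verifying that the corollary's hypotheses coincide exactly with those of the proposition and confirming that the intermediate embedding behaves as claimed on both $\mathcal{E}_1$ and $\mathcal{E}_2$.
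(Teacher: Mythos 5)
Your proposal is correct and matches the paper's own argument exactly: the paper derives this corollary by transitivity of $\leq_c$ from Proposition~\ref{prop:order-to-equivalence} together with the remark that $\{\mathcal{E}_1,\mathcal{E}_2\} \leq_c \{\hat{\mathcal{E}}_1, \hat{\mathcal{E}}_2\}$ (which the paper leaves as ``straightforward''). Your explicit operator duplicating each equivalence class $\omega$ many times is a valid way to fill in that remark.
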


In the end of this section, we will need the following special cases.
\begin{corollary}
  $ \{1 + \eta,\eta + 1\} \leq_c \{\mathcal{E}_1, \mathcal{E}_2\}$ and
  $\{1 + \eta,\eta + 1\} \leq_c \{\hat{\mathcal{E}}_1, \hat{\mathcal{E}}_2\}$.
\end{corollary}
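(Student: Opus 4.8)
The plan is to recognize that both embeddings are immediate instances of Proposition~\ref{prop:order-to-equivalence} and its corollary, so the only genuine task is to verify that the linear orders $1+\eta$ and $\eta+1$ satisfy the endpoint hypotheses required there, taking $\mathcal{L}_1 = 1+\eta$ and $\mathcal{L}_2 = \eta+1$.

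First I would unwind the order types. The order $1+\eta$ consists of a single point placed below a copy of the dense order $\eta$ without endpoints; hence it has a least element (the adjoined point), while it inherits from $\eta$ the absence of a greatest element. Dually, $\eta+1$ consists of $\eta$ followed by a single point, so it has a greatest element (the adjoined point) and, inheriting the lack of a minimum from $\eta$, no least element. Thus $\mathcal{L}_1 = 1+\eta$ has a least element and no greatest element, and $\mathcal{L}_2 = \eta+1$ has a greatest element and no least element, which is exactly the hypothesis common to both earlier statements.

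With the hypotheses verified, the first conclusion $\{1+\eta,\eta+1\} \leq_c \{\mathcal{E}_1,\mathcal{E}_2\}$ follows at once from Proposition~\ref{prop:order-to-equivalence}, and the second conclusion $\{1+\eta,\eta+1\} \leq_c \{\hat{\mathcal{E}}_1,\hat{\mathcal{E}}_2\}$ follows from the preceding corollary, which in turn composes that proposition with the reduction $\{\mathcal{E}_1,\mathcal{E}_2\} \leq_c \{\hat{\mathcal{E}}_1,\hat{\mathcal{E}}_2\}$ already observed in the text.

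Since the statement is a pure specialization of results proved just above, I do not expect any real obstacle; the only point requiring a moment's care is confirming that $1+\eta$ and $\eta+1$ really fall on the correct sides of the endpoint dichotomy (least-but-no-greatest versus greatest-but-no-least) rather than being accidentally swapped, so that the images under the operator $\Gamma$ land in the intended classes.
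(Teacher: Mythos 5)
Your proposal is correct and matches the paper's own (implicit) argument exactly: the paper states this corollary as an immediate special case of Proposition~\ref{prop:order-to-equivalence} and the transitivity corollary following it, and your verification that $1+\eta$ has a least but no greatest element while $\eta+1$ has a greatest but no least element is precisely the specialization needed.
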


The next result is not so useful in itself, because isomorphic copies of the input structure produce non-isomorphic
copies of the output structure, but when we replace every element of the output structure by a copy of $\eta$,
we will get isomorphic structures.

\begin{proposition}
  \label{prop:equivalence:reduction:1}
  Let $\mathcal{K}_1$ be the class of linear orders, which have a least element and no greatest element,
  and let $\mathcal{K}_2$ be the class of linear orders, which have no least element and no greatest element.
  Then $\{\hat{\mathcal{E}}_1, \hat{\mathcal{E}}_2\} \leq_c \{\mathcal{K}_1,\mathcal{K}_2\}$,
  which means that there is an enumeration operator $\Gamma$ such that
  for any copy $\hat{\mathcal{S}}_i$ of $\hat{\mathcal{E}}_i$, $\Gamma(\hat{\mathcal{S}}_i) \in \mathcal{K}_i$, for $i = 0,1$.
\end{proposition}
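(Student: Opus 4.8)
The plan is to construct a single enumeration operator $\Gamma$ that, reading an enumeration of the atomic diagram of the input equivalence structure $\hat{\mathcal{S}}$, outputs (the atomic diagram of) a linear order built up by monotonically adding points at each stage. The guiding observation is that the two structures differ exactly in the following $\Pi_2$-type feature: $\hat{\mathcal{E}}_1$ possesses equivalence classes of size $1$, whereas in $\hat{\mathcal{E}}_2$ every class has size at least $2$. Accordingly, I will arrange that $\Gamma$ produces a linear order with no greatest element in both cases, while the presence of a least element is made to track the existence of a size-$1$ class.

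Concretely, at each stage $s$ I look at the finite part of the diagram enumerated so far and record, for every class that has appeared, the number of its elements seen up to stage $s$ (a nondecreasing, positively enumerable quantity). The operator performs two kinds of monotone additions. First, at every stage it appends a new point above all points produced so far (right-growth); since this happens infinitely often, the resulting order is cofinal with an ascending $\omega$-chain and hence has no greatest element, as required for both $\mathcal{K}_1$ and $\mathcal{K}_2$. Second, I maintain an \emph{active bottom candidate} $\beta$, namely the least-indexed class (say, by least G\"odel number of an enumerated element) that currently has exactly one enumerated element; whenever $\beta$ is seen to acquire a second element, so that our guess that $\beta$ has size $1$ is refuted, I add a new point below all points produced so far (left-growth) and reselect $\beta$ as the next least-indexed class of current size $1$. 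Only positive information (appearances of elements and of $\sim$-facts) is ever used, and every added order relation places the new point above-all or below-all the existing ones, so the construction is consistent and monotone and thus genuinely defines an enumeration operator.

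The verification then splits according to the input. If $\hat{\mathcal{S}} \cong \hat{\mathcal{E}}_1$, let $C$ be the least-indexed size-$1$ class; every class of smaller index is then infinite and so is eventually observed to reach size $\geq 2$ and never returns to size $1$. Hence after finitely many refutations the candidate $\beta$ stabilizes on $C$ and is never refuted again, so only finitely many left-growth steps occur and the lowest point produced is a genuine least element; thus $\Gamma(\hat{\mathcal{S}}) \in \mathcal{K}_1$. If $\hat{\mathcal{S}} \cong \hat{\mathcal{E}}_2$, then every class eventually reaches size $\geq 2$, so whatever class is chosen as $\beta$ is eventually refuted; since infinitely many classes appear, a fresh candidate is always available and the refutations occur infinitely often, producing an infinite strictly descending sequence of left-points with no least element, whence $\Gamma(\hat{\mathcal{S}}) \in \mathcal{K}_2$.

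I expect the main obstacle to be precisely that ``being a size-$1$ class'' and ``every class has size $\geq 2$'' are not positively decidable, so $\Gamma$ cannot detect finiteness directly; the whole weight of the argument rests on the guessing-and-refutation bookkeeping and on showing that the active candidate $\beta$ stabilizes exactly when a genuine size-$1$ class exists. A secondary point to check carefully is that adding points only at the very top or the very bottom keeps every previously committed order fact valid, so that monotonicity of the enumeration operator is never violated, and that right-growth at every stage guarantees the output is infinite and has no greatest element in either case.
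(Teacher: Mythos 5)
There is a genuine gap: your construction does not define an enumeration operator, which is what $\leq_c$ requires. An enumeration operator is a c.e.\ set of axioms $(\alpha,\varphi)$, where $\alpha$ is a finite set of basic input sentences, and its output $\Gamma(X)=\{\varphi : (\exists \alpha\subseteq X)\,[(\alpha,\varphi)\in\Gamma]\}$ depends only on the input diagram \emph{as a set}, monotonically in $\subseteq$. Your procedure is a stage-by-stage construction whose decisions depend on the \emph{order} in which the diagram is enumerated and, worse, on \emph{absence} of information: the active bottom candidate $\beta$ is ``the least-indexed class that currently has exactly one enumerated element,'' which is a property of the finite portion seen so far, not of the structure. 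Concretely, if in one enumeration an element $a_0$ appears alone (so its class becomes the candidate) and the fact $a_0\sim a_1$ arrives later (triggering a left-growth point), while in another enumeration $a_0$ and $a_0\sim a_1$ arrive together (no refutation event), the two runs commit to different output sentences for the same input set. Equivalently, if you try to make the procedure canonical by running it on each finite subset $\alpha$ sorted by G\"odel number, monotonicity fails: for $\alpha\subseteq\beta$ the run on $\beta$ need not extend the run on $\alpha$, because the candidate history changes, so the emitted order facts are not preserved under $\subseteq$. Your remark that points are only ever added at the very top or bottom guarantees consistency \emph{along a single enumeration}, but that is not the issue; the issue is well-definedness and monotonicity across enumerations and across finite subsets. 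As written, your argument establishes at most a Turing computable embedding $\{\hat{\mathcal{E}}_1,\hat{\mathcal{E}}_2\}\leq_{tc}\{\mathcal{K}_1,\mathcal{K}_2\}$ (where stage-based, history-dependent constructions are legitimate because the operator reads a characteristic function in a fixed order), and $\leq_{tc}$ is strictly weaker than the $\leq_c$ claimed in the proposition and needed for this section of the paper.

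The paper's proof avoids dynamics entirely by giving a \emph{static}, positive definition of the output order: its domain consists of the strictly $<_{\mathbb{N}}$-increasing tuples $(x_0,\dots,x_n)$ of input elements with $|[x_i]_\sim|\geq 2$ for all $i<n$ (a $\Sigma_1$ condition, witnessed by finitely many positive facts), ordered Kleene--Brouwer style: $\bar{x}\prec\bar{y}$ iff $\bar{x}$ properly extends $\bar{y}$, or $x_i<y_i$ at the first index where they differ. Every clause is enumerable from positive data, so this genuinely is an enumeration operator. Downward movement in the output corresponds to \emph{extending} a tuple, which requires the last coordinate's class to have size $\geq 2$: in $\hat{\mathcal{E}}_2$ every tuple extends, so there is no least element, while in $\hat{\mathcal{E}}_1$ the increasing tuple ending at the $<_{\mathbb{N}}$-least size-one element can neither be extended nor preceded, and is the least element. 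If you want to rescue your approach, this is the shape the repair must take: the ``guess'' that a class has size one must be encoded structurally (so that a wrong guess is automatically pushed upward/absorbed when a second element appears), rather than by history-dependent bookkeeping.
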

\begin{proof}
  Given as input a structure $\mathcal{S}$ in the language of equivalence structures, the enumeration operator $\Gamma$ will output a linear order with domain $D$
  consisting of tuples of elements from $\mathcal{S}$, where
  \[D = \{(x_0,\dots,x_n) \mid \bigwedge_{i<n}(x_i <_{\mathbb{N}} x_{i+1}\ \&\ |[x_i]_\sim| \geq 2)\},\]
  and for two such tuples $\bar{x} = (x_0,\dots,x_n)$ and $\bar{y} = (y_0,\dots,y_k)$, we will say that $\bar{x} \prec \bar{y}$ if
  \begin{itemize}
  \item
    $\bar{x}$ is a proper extension of $\bar{y}$;
  \item
    otherwise, if the first index where $\bar{x}$ and $\bar{y}$ differ is $i$, then $x_i < y_i$.
  \end{itemize}
  
  Now we consider the linear orders $\mathcal{L}_1 = \Gamma(\hat{\mathcal{E}}_1)$ and $\mathcal{L}_2 = \Gamma(\hat{\mathcal{E}}_2)$.
  First we will show that $\mathcal{L}_1 \in \mathcal{K}_1$.
  Given the structure $\hat{\mathcal{E}}_1$, let $\hat{x}$ be the least element in the domain of $\hat{\mathcal{E}}_1$, in the order of natural numbers, describing an equivalence class with exactly one element in $\hat{\mathcal{E}}_1$ and let $\bar{x} = (x_0,\dots,x_n)$ be all elements of $\hat{\mathcal{E}}_1$, ordered as natural numbers, less that $\hat{x}$ such that
  \[x_0 <_{\mathbb{N}} x_1 <_{\mathbb{N}} \cdots <_{\mathbb{N}} x_{n-1} <_{\mathbb{N}} x_n = \hat{x}.\]
  It is easy to see that $\bar{x}$ cannot have proper extensions in the domain of $\mathcal{L}_1$ because the equivalence class of $x_n$ has size one.
  Then if $(y_0,\dots,y_k) \prec (x_0,\dots,x_n)$, we have $y_i < x_i$, for some $i$, but this is impossible since $\bar{x}$ contains all elements less that $\hat{x}$ ordered in a strictly increasing order.
  We conclude that $\bar{x}$ is the least element of $\mathcal{L}_1$.
  It is easy to see that $\mathcal{L}_1$ does not have a greatest element. It follows that $\mathcal{L}_1 \in \mathcal{K}_1$.

  Similarly, it is clear that $\mathcal{L}_2 = \Gamma(\hat{\mathcal{E}}_2)$ does not have a greatest element.
  We will show that $\mathcal{L}_2$ does not have a least element.
  Consider an arbitrary $\bar{x} = (x_0,\dots,x_n)$ in the domain of $\mathcal{L}_2$.
  Define $\bar{x}' = (x_0,\dots,x_n,x')$, for some $x'$ such that $x_n <_{\mathbb{N}} x'$. Then $\bar{x}' \prec \bar{x}$, because $\bar{x}'$ is a proper extension of $\bar{x}$. 
\end{proof}

\begin{corollary}
  \label{cor:equivalence:eta:1}
  $\{\hat{\mathcal{E}}_1,\hat{\mathcal{E}}_2\} \leq_c \{1+\eta, \eta\}$.
\end{corollary}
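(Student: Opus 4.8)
The plan is to realize the informal idea stated in the remark preceding Proposition~\ref{prop:equivalence:reduction:1}: take the operator $\Gamma$ produced there and postcompose it with an operator $\Delta$ that replaces every point of a linear order by a copy of $1+\eta$. Concretely, $\Delta$ is the enumeration operator which, on a linear order $\mathcal{M}$, outputs the order on $\{(x,q) : x \in \dom(\mathcal{M}),\ q \in \mathbb{Q}\}$ defined by $(x,q) \prec (x',q')$ iff either $x <_{\mathcal{M}} x'$, or $x = x'$ and $q <_{1+\eta} q'$, where $<_{1+\eta}$ is a fixed computable copy of $1+\eta$ on $\mathbb{Q}$. The cross-block comparisons are read off (positively and negatively) from the atomic diagram of $\mathcal{M}$, while the within-block comparisons are fixed in advance; hence $\Delta$ is monotone and is given by a c.e.\ set of axioms, so it is a genuine enumeration operator. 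Being a composition of enumeration operators, $\Delta\circ\Gamma$ is again one.

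The heart of the argument is the verification that $\Delta$ \emph{normalizes} the isomorphism type of its input, which is exactly what repairs the defect noted before Proposition~\ref{prop:equivalence:reduction:1}. First I would check that $\Delta(\mathcal{M})$ is always a \emph{dense} linear order: within a block density is inherited from $1+\eta$, while for $(x,q)\prec(x',q')$ with $x <_{\mathcal{M}} x'$ one uses that the block over $x$ has no greatest element to produce an intermediate point. Next, $\Delta(\mathcal{M})$ never has a greatest element (since $1+\eta$ has none), and it has a least element if and only if $\mathcal{M}$ does, the global least point being the least element of the least block, which exists because $1+\eta$ has a least element.

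Combining this with Proposition~\ref{prop:equivalence:reduction:1}, for any copy $\hat{\mathcal{S}}_1$ of $\hat{\mathcal{E}}_1$ we have $\Gamma(\hat{\mathcal{S}}_1)\in\mathcal{K}_1$, so $\Delta(\Gamma(\hat{\mathcal{S}}_1))$ is a countable dense linear order with a least element and no greatest element; by Cantor's theorem it is isomorphic to $1+\eta$. Likewise, for any copy $\hat{\mathcal{S}}_2$ of $\hat{\mathcal{E}}_2$ we have $\Gamma(\hat{\mathcal{S}}_2)\in\mathcal{K}_2$, whence $\Delta(\Gamma(\hat{\mathcal{S}}_2))$ is a countable dense linear order with no endpoints, hence isomorphic to $\eta$. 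Thus $\Delta\circ\Gamma$ sends every copy of $\hat{\mathcal{E}}_1$ to a copy of $1+\eta$ and every copy of $\hat{\mathcal{E}}_2$ to a copy of $\eta$; since $1+\eta\not\cong\eta$, both conditions in the definition of a computable embedding hold, yielding $\{\hat{\mathcal{E}}_1,\hat{\mathcal{E}}_2\}\leq_c\{1+\eta,\eta\}$.

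The only real obstacle is conceptual rather than computational: one must appreciate that although $\Gamma$ alone fails condition~(2) of a computable embedding (distinct copies of $\hat{\mathcal{E}}_i$ may yield non-isomorphic orders within $\mathcal{K}_i$), the blow-up by $1+\eta$ erases all information about the input beyond the presence or absence of a least element, and Cantor's characterization of countable dense linear orders collapses each class $\mathcal{K}_i$ to a single isomorphism type. The remaining points, namely the monotonicity of $\Delta$ and the density computation, are routine.
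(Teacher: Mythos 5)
Your proposal is correct and matches the paper's proof in essence: the paper also takes the operator $\Gamma$ from Proposition~\ref{prop:equivalence:reduction:1} and replaces each element of its output by an interval of rationals of the form $[p,q)$, which is exactly your blow-up by $1+\eta$. Your write-up merely makes explicit the verification (density, endpoint behaviour, and Cantor's back-and-forth theorem) that the paper leaves implicit.
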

\begin{proof}
  Given the enumeration operator $\Gamma$ from the proof of Proposition \ref{prop:equivalence:reduction:1},
  we produce a new enumeration operator, where every element of $\Gamma(\hat{\mathcal{E}}_i)$ is replaced by an interval of rational numbers of the form $[p,q)$.
\end{proof}

\begin{proposition}
  \label{prop:equivalence:reduction:2}
  Let $\mathcal{K}_1$ be the class of infinite linear orders,  which have a least element and no greatest element,
  and let $\mathcal{K}_2$ be the class of infinite linear orders, which have no least element and no greatest element.
  Then $\{\hat{\mathcal{E}}_1, \hat{\mathcal{E}}_2\} \leq_c \{\mathcal{K}_2,\mathcal{K}_1\}$.
\end{proposition}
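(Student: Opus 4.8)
The plan is to reuse the tuple-based enumeration operator from the proof of Proposition~\ref{prop:equivalence:reduction:1}, keeping the ordering $\prec$ on increasing tuples exactly as there (a proper extension is $\prec$-smaller, and otherwise two tuples are compared lexicographically at the first coordinate where they differ), but modifying the output domain so that the roles of $\hat{\mathcal{E}}_1$ and $\hat{\mathcal{E}}_2$ are interchanged: now the classes of size two, rather than the singletons, must be the ones producing a least element. Concretely, I would take as output domain
\[
  D = \{(x_0,\dots,x_n) \mid x_0 <_{\mathbb{N}} \dots <_{\mathbb{N}} x_n,\ |[x_i]_\sim| \geq 3 \text{ for } i<n,\ \text{and } |[x_n]_\sim|\geq 2\},
\]
so that every coordinate lies in a class of size at least two, and every coordinate except possibly the last lies in a class of size at least three. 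Both are $\Sigma^0_1$ conditions on the input diagram, so, exactly as in Proposition~\ref{prop:equivalence:reduction:1}, realizing tuples and enumerating the computable relation $\prec$ between realized tuples yields a genuine monotone enumeration operator $\Gamma$. As with Proposition~\ref{prop:equivalence:reduction:1}, $\Gamma$ is only required to send a copy of $\hat{\mathcal{E}}_i$ into the appropriate class (isomorphic inputs may yield non-isomorphic outputs), which suffices for the intended blow-up by $\eta$.

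For a copy of $\hat{\mathcal{E}}_1$ the two thresholds ``$\geq 2$'' and ``$\geq 3$'' pick out exactly the same elements, namely those lying in an infinite class, since $\hat{\mathcal{E}}_1$ has no class of size two. Hence a tuple is realized precisely when all of its coordinates lie in infinite classes, and every such tuple can be properly extended by appending a larger element of some infinite class. As a proper extension is $\prec$-smaller, this shows $\Gamma(\hat{\mathcal{E}}_1)$ has no least element; moreover every tuple lies $\prec$-below its own one-element prefix, and the one-element tuples have no $\prec$-greatest member, so there is no greatest element either. Thus $\Gamma(\hat{\mathcal{E}}_1)\in\mathcal{K}_2$.

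For a copy of $\hat{\mathcal{E}}_2$ the threshold ``$\geq 2$'' is met by every element, while ``$\geq 3$'' again isolates the infinite classes. A realized tuple can be extended exactly when its last coordinate lies in an infinite class, so the non-extendable tuples are precisely those ending in an element of a class of size two. Mirroring the argument of Proposition~\ref{prop:equivalence:reduction:1}, I would let $e^\ast$ be the $<_{\mathbb{N}}$-least element lying in a class of size two (note every element below $e^\ast$ then lies in an infinite class, hence qualifies as an intermediate coordinate) and check that the tuple listing, in increasing order, all elements $<_{\mathbb{N}} e^\ast$ followed by $e^\ast$ is the $\prec$-least realized tuple; the same prefix remark as above rules out a greatest element. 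This places $\Gamma(\hat{\mathcal{E}}_2)$ in $\mathcal{K}_1$.

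The step I expect to be the main obstacle is choosing the domain correctly, which is exactly where a naive dualization of Proposition~\ref{prop:equivalence:reduction:1} breaks down. An enumeration operator sees only positive information, so any size threshold that a singleton of $\hat{\mathcal{E}}_1$ can pass is automatically passed by the larger classes of $\hat{\mathcal{E}}_2$; one therefore cannot make singletons extendable while keeping size-two elements terminal, and any threshold-based rule leaves the singletons of $\hat{\mathcal{E}}_1$ as terminal tuples, wrongly creating a least element. The resolution is to refuse to place singletons into tuples at all---this is the force of the extra clause $|[x_n]_\sim|\geq 2$ on the last coordinate---so that in $\hat{\mathcal{E}}_1$ only infinite classes contribute and no terminal tuple ever arises, whereas in $\hat{\mathcal{E}}_2$ the size-two classes survive as the terminal tuples that yield the least element. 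Once the domain is pinned down this way, the remaining bookkeeping is routine and parallels Proposition~\ref{prop:equivalence:reduction:1}.
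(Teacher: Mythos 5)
Your proposal is correct and coincides with the paper's own proof: the paper defines $\Gamma$ with exactly the same domain $D$ (coordinates $x_i$ with $|[x_i]_\sim|\geq 3$ for $i<n$ and $|[x_n]_\sim|\geq 2$) and the same ordering $\prec$, and then simply says to repeat the argument of Proposition~\ref{prop:equivalence:reduction:1}. Your verification that $\Gamma(\hat{\mathcal{E}}_1)$ has neither endpoint while $\Gamma(\hat{\mathcal{E}}_2)$ acquires a least element from the $<_{\mathbb{N}}$-least size-two class is precisely the detail the paper leaves implicit.
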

\begin{proof}
  Given as input a structure $\mathcal{S}$ in the language of equivalence structures, the enumeration operator $\Gamma$ will output a linear order with domain $D$
  consisting of tuples of elements from $\mathcal{S}$, where
  \[D = \{(x_0,\dots,x_n) \mid \bigwedge_{i<n} (x_i <_{\mathbb{N}} x_{i+1}\ \&\ |[x_i]_\sim| \geq 3)\ \&\ |[x_n]_\sim| \geq 2 \}.\]
  Now we essentially repeat the proof of Proposition \ref{prop:equivalence:reduction:1}.
\end{proof}

\begin{corollary}
  \label{cor:equivalence:eta:2}
  $\{\hat{\mathcal{E}}_1,\hat{\mathcal{E}}_2\} \leq_c \{\eta, \eta+1\}$.
\end{corollary}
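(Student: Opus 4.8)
The plan is to obtain the reduction by post-processing the enumeration operator $\Gamma$ furnished by Proposition~\ref{prop:equivalence:reduction:2}. By our convention, that $\Gamma$ satisfies $\Gamma(\hat{\mathcal{E}}_1) \in \mathcal{K}_2$ (no least and no greatest element) and $\Gamma(\hat{\mathcal{E}}_2) \in \mathcal{K}_1$ (a least element but no greatest element). Note that the distinguished endpoint produced here is a \emph{least} element, whereas the target order $\eta+1$ keeps its distinguished point on the right, as a \emph{greatest} element. Accounting for this mismatch is exactly what forces the extra reversal step below, and is the conceptual crux of the argument; no point-replacement alone can manufacture a greatest element out of an order that has none.

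First I would define an intermediate operator $\Gamma_0$ precisely as in the proof of Corollary~\ref{cor:equivalence:eta:1}, replacing every element of $\Gamma(\hat{\mathcal{E}}_i)$ by a rational interval of the form $[p,q)$, whose order type is $1+\eta$. The routine density-and-endpoints bookkeeping already used in Proposition~\ref{prop:equivalence:reduction:1} then shows that blowing up a member of $\mathcal{K}_2$ in this way produces a countable dense order without endpoints, so $\Gamma_0(\hat{\mathcal{E}}_1) \cong \eta$; and blowing up a member of $\mathcal{K}_1$ preserves its least element (the left endpoint of the first interval) while creating no greatest element, so $\Gamma_0(\hat{\mathcal{E}}_2) \cong 1+\eta$. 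Hence $\Gamma_0$ witnesses $\{\hat{\mathcal{E}}_1,\hat{\mathcal{E}}_2\} \leq_c \{\eta, 1+\eta\}$.

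Finally I would reverse the order. Let $R$ be the enumeration operator that outputs the given linear order with its ordering relation reversed; it is clearly a computable embedding of $\{\eta, 1+\eta\}$ into $\{\eta, \eta+1\}$, since $\eta^\star \cong \eta$ and $(1+\eta)^\star \cong \eta+1$. Transitivity of $\leq_c$ then yields $\{\hat{\mathcal{E}}_1,\hat{\mathcal{E}}_2\} \leq_c \{\eta, \eta+1\}$, as required. The hard part is really just the verification that the two order-type computations in the middle step hold uniformly for \emph{every} copy of $\hat{\mathcal{E}}_i$ simultaneously; everything else reduces to the density arguments already established and to the easy observation that reversal is itself an enumeration operator respecting the isomorphism relation.
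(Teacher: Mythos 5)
Your proof is correct and follows essentially the same route as the paper: both post-process the operator from Proposition~\ref{prop:equivalence:reduction:2} by an order reversal together with a point-by-interval blow-up. The only (immaterial) difference is the order of the two steps -- the paper reverses the output of Proposition~\ref{prop:equivalence:reduction:2} first and then replaces points by intervals of type $(p,q]$, whereas you blow up with $[p,q)$ first to land in $\{\eta, 1+\eta\}$ and then reverse.
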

\begin{proof}
  First we reverse the relation in the construction of $\Gamma$ from Proposition \ref{prop:equivalence:reduction:2} to produce a linear order with a greatest element.
  Then we produce a new enumeration operator, where every element of the linear order is replaced by an interval of rational numbers of the form $(p,q]$.
\end{proof}

\begin{corollary}
  \label{cor:equivalence-to-eta}
  $\{\hat{\mathcal{E}}_1,\hat{\mathcal{E}}_2\} \leq_c \{1+\eta, \eta+1\}$.
\end{corollary}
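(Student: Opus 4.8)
The plan is to glue together the two reductions supplied by Corollaries~\ref{cor:equivalence:eta:1} and~\ref{cor:equivalence:eta:2} by taking the ordered sum of their outputs. Write $\Gamma_1$ for the enumeration operator of Corollary~\ref{cor:equivalence:eta:1}, so that $\Gamma_1(\hat{\mathcal{E}}_1)\cong 1+\eta$ and $\Gamma_1(\hat{\mathcal{E}}_2)\cong\eta$, and write $\Gamma_2$ for the operator of Corollary~\ref{cor:equivalence:eta:2}, so that $\Gamma_2(\hat{\mathcal{E}}_1)\cong\eta$ and $\Gamma_2(\hat{\mathcal{E}}_2)\cong\eta+1$. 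I would define a new operator $\Gamma$ which, on input $\mathcal{S}$, produces the linear order $\Gamma_1(\mathcal{S})+\Gamma_2(\mathcal{S})$, i.e.\ the concatenation placing every element coming from $\Gamma_1$ strictly below every element coming from $\Gamma_2$.

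Concretely, I would tag the domain of the first copy by $0$ and that of the second by $1$ to keep them disjoint, let the order inside each tagged block be inherited from $\Gamma_1$ and $\Gamma_2$ respectively, and enumerate the cross-relation $(0,a)<(1,b)$ for every pair of elements $a,b$ that $\Gamma_1$ and $\Gamma_2$ put into their respective domains. Since $\Gamma_1$ and $\Gamma_2$ are enumeration operators and the cross-relations form a c.e.\ set depending monotonically on the input, $\Gamma$ is again an enumeration operator; monotonicity is preserved because larger inputs only add elements and order facts to each block, never removing any.

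It then remains to compute the isomorphism types. On any copy $\mathcal{S}$ of $\hat{\mathcal{E}}_1$ we get $\Gamma(\mathcal{S})\cong(1+\eta)+\eta\cong 1+\eta$, using the absorption $\eta+\eta\cong\eta$; on any copy of $\hat{\mathcal{E}}_2$ we get $\Gamma(\mathcal{S})\cong\eta+(\eta+1)\cong\eta+1$. Thus $\Gamma(\hat{\mathcal{E}}_1)\cong 1+\eta$ and $\Gamma(\hat{\mathcal{E}}_2)\cong\eta+1$, as required. Condition~(2) in the definition of a computable embedding is then immediate, since $1+\eta$ has a least but no greatest element while $\eta+1$ has a greatest but no least element, so the two outputs are non-isomorphic precisely when the two inputs are. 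I expect the only genuinely non-routine point to be confirming that the ordered sum of two enumeration operators is itself realized by an enumeration operator, via the tagging and the c.e.\ enumeration of the cross-relations; the order arithmetic $\eta+\eta\cong\eta$ is a standard consequence of Cantor's characterization of $\eta$ as the countable dense order without endpoints.
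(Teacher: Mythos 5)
Your proof is correct and follows essentially the same route as the paper: concatenating the outputs of the operators from Corollaries~\ref{cor:equivalence:eta:1} and~\ref{cor:equivalence:eta:2} and invoking the absorption laws $1+\eta+\eta = 1+\eta$ and $\eta+\eta+1 = \eta+1$. The tagging/cross-relation details you spell out are exactly the routine implementation of ``concatenation of enumeration operators'' that the paper leaves implicit.
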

\begin{proof}
  We concatenate the results of the enumeration operators $\Gamma_0$ from Corollary \ref{cor:equivalence:eta:1} and $\Gamma_1$ from Corollary \ref{cor:equivalence:eta:2}. More formally, given an input structure $\mathcal{A}$, our concatenation operator $\Delta$ copies $\Gamma_0(\mathcal{A})$ on the set of even numbers, and $\Delta$ copies $\Gamma_1(\mathcal{A})$ on the odd numbers. In addition, $\Delta$ declares that in the output structure, every even number is strictly less than every odd number. We observe that $1 + \eta + \eta = 1 + \eta$ and $\eta + \eta + 1 = \eta + 1$; hence, $\Delta$ has the desired properties.
\end{proof}

\begin{theorem}
  \label{th:eta:equivalence}
  $\{1 + \eta, \eta + 1\} \equiv_c \{\hat{\mathcal{E}}_1, \hat{\mathcal{E}}_2\} \equiv_c \{\mathcal{E}_1, \mathcal{E}_2\}$.
\end{theorem}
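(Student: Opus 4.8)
The plan is to deduce the theorem purely by assembling the reductions established above and invoking the transitivity of $\leq_c$; no new enumeration operator needs to be built. I would organize the argument around the two $\equiv_c$ claims separately, each of which splits into the two inequalities witnessing it.

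For $\{1+\eta,\eta+1\} \equiv_c \{\hat{\mathcal{E}}_1,\hat{\mathcal{E}}_2\}$, one direction, $\{\hat{\mathcal{E}}_1,\hat{\mathcal{E}}_2\} \leq_c \{1+\eta,\eta+1\}$, is exactly Corollary~\ref{cor:equivalence-to-eta}, obtained by concatenating the operators of Corollaries~\ref{cor:equivalence:eta:1} and~\ref{cor:equivalence:eta:2}. The reverse direction, $\{1+\eta,\eta+1\} \leq_c \{\hat{\mathcal{E}}_1,\hat{\mathcal{E}}_2\}$, was already recorded among the special cases of the Corollary to Proposition~\ref{prop:order-to-equivalence} (the one asserting both $\{1+\eta,\eta+1\}\leq_c\{\mathcal{E}_1,\mathcal{E}_2\}$ and $\{1+\eta,\eta+1\}\leq_c\{\hat{\mathcal{E}}_1,\hat{\mathcal{E}}_2\}$). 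Since $\leq_c$ is a preorder, these two inequalities together yield the first equivalence immediately.

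For $\{\hat{\mathcal{E}}_1,\hat{\mathcal{E}}_2\} \equiv_c \{\mathcal{E}_1,\mathcal{E}_2\}$, the easy direction $\{\mathcal{E}_1,\mathcal{E}_2\}\leq_c\{\hat{\mathcal{E}}_1,\hat{\mathcal{E}}_2\}$ is the straightforward reduction noted when $\mathcal{E}_k$ and $\hat{\mathcal{E}}_k$ were introduced (realized, e.g., by outputting $\omega$ many disjoint copies of the input, which turns the unique size-$k$ class into infinitely many size-$k$ classes while keeping infinitely many infinite classes). The remaining direction $\{\hat{\mathcal{E}}_1,\hat{\mathcal{E}}_2\}\leq_c\{\mathcal{E}_1,\mathcal{E}_2\}$ is not available head-on, so I would route it through the dense-order pair: by Corollary~\ref{cor:equivalence-to-eta} we have $\{\hat{\mathcal{E}}_1,\hat{\mathcal{E}}_2\}\leq_c\{1+\eta,\eta+1\}$, and by the special-cases Corollary above $\{1+\eta,\eta+1\}\leq_c\{\mathcal{E}_1,\mathcal{E}_2\}$; transitivity then gives the desired reduction.

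Combining the four inequalities yields the two $\equiv_c$ statements, and hence the theorem. The only mildly delicate point is the routing in the last step: the reverse collapse of $\hat{\mathcal{E}}$ down to $\mathcal{E}$ is obtained indirectly, by passing through $\{1+\eta,\eta+1\}$, rather than by a direct construction. All the substantive work---encoding a linear order into equivalence structures and back, and the device of replacing each point of an intermediate output by a copy of $\eta$ so that a priori non-isomorphic outputs become isomorphic---has already been carried out in Propositions~\ref{prop:order-to-equivalence}, \ref{prop:equivalence:reduction:1}, and~\ref{prop:equivalence:reduction:2}, so I expect no further obstacle.
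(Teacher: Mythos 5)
Your proposal is correct and takes essentially the same route as the paper: the paper simply observes the cycle $\{\mathcal{E}_1,\mathcal{E}_2\} \leq_c \{\hat{\mathcal{E}}_1,\hat{\mathcal{E}}_2\} \leq_c \{1+\eta,\eta+1\} \leq_c \{\mathcal{E}_1,\mathcal{E}_2\}$ (using the straightforward reduction, Corollary~\ref{cor:equivalence-to-eta}, and Proposition~\ref{prop:order-to-equivalence}) and concludes all three pairs are equivalent by transitivity. Your four-inequality decomposition, including the indirect routing of $\{\hat{\mathcal{E}}_1,\hat{\mathcal{E}}_2\} \leq_c \{\mathcal{E}_1,\mathcal{E}_2\}$ through $\{1+\eta,\eta+1\}$, is just a slightly more verbose unpacking of that same cycle.
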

\begin{proof}
  By Corollary \ref{cor:equivalence-to-eta}, we have that $\{\mathcal{E}_1, \mathcal{E}_2\} \leq_c \{\hat{\mathcal{E}}_1, \hat{\mathcal{E}}_2\} \leq_c \{1 + \eta, \eta + 1\}$
  and by Proposition \ref{prop:order-to-equivalence} we have that $\{1 + \eta, \eta + 1\} \leq_c \{\mathcal{E}_1, \mathcal{E}_2\}$.
\end{proof}

Recall that the structure $\mathcal{E}$ has infinitely many classes of infinite size, and no classes of other sizes.

\begin{proposition}
  \label{prop:formula-equivalence}
  Suppose that $\mathcal{A}$ and $\mathcal{B}$ are structures in the same language, for which there exists a $\Sigma^c_2$ sentence $\phi$ such that
  $\mathcal{A} \models \phi$ and $\mathcal{B} \models \neg \phi$.
  Then $\{\mathcal{A}, \mathcal{B}\} \leq_c \{\hat{\mathcal{E}}_1,\mathcal{E}\}$.
\end{proposition}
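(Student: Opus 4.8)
The plan is to construct an enumeration operator $\Gamma$ directly, mirroring the witness-tracking bookkeeping from the proof of Theorem~\ref{theo:description}, so that a copy of $\mathcal{A}$ (which satisfies $\phi$) is sent to a copy of $\hat{\mathcal{E}}_1$ and a copy of $\mathcal{B}$ (which satisfies $\neg\phi$) is sent to a copy of $\mathcal{E}$. Write $\phi$ in the normal form
\[
  \phi = \exists \bar{x}\, \bigwedge_{i\in\omega} \forall \bar{y}_i\, \varphi_i(\bar{x},\bar{y}_i),
\]
with each $\varphi_i$ finitary and quantifier-free (the general $\Sigma^c_2$ case, where $\phi$ is a c.e.\ disjunction of such blocks, is handled analogously by letting a witness range over pairs consisting of a disjunct index and a tuple $\bar{c}$). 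Call a tuple $\bar{c}$ from the input structure $\mathcal{S}$ a \emph{genuine witness} if $\mathcal{S}\models \bigwedge_i \forall\bar{y}_i\,\varphi_i(\bar{c},\bar{y}_i)$. The key observation is that failing to be a genuine witness is a positive event: $\bar{c}$ is \emph{not} a witness exactly when the diagram of $\mathcal{S}$ eventually confirms $\neg\varphi_i(\bar{c},\bar{y}_i)$ for some $i$ and some tuple $\bar{y}_i$, and since $\varphi_i$ is quantifier-free this is a $\Sigma^c_1$ condition that an enumeration operator can detect. Dually, confirming that $\bar{c}$ \emph{is} a genuine witness is not a positive event --- but we never need to do so, as we shall only ever react to detected counterexamples.

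The operator $\Gamma$ will build an equivalence structure as a disjoint union of classes of two kinds. First, for every $m\in\omega$ we put a class $G_m$ into which $\Gamma$ will unconditionally enumerate infinitely many elements; these guarantee infinitely many infinite classes in every output. Second, for every tuple $\bar{c}$ and every $n\in\omega$ we reserve a class $C_{\bar c, n}$, into which $\Gamma$ will immediately enumerate a single element and keep it a singleton \emph{until} (if ever) a counterexample to $\bar{c}$ being a witness is detected, at which point $\Gamma$ will enumerate infinitely many further elements into $C_{\bar c, n}$, making it infinite. Since the classes never merge and membership of each element is decided as soon as it appears, $\Gamma$ can consistently enumerate the required negated atomic sentences, so the output is genuinely (the atomic diagram of) an equivalence structure; monotonicity is automatic for enumeration operators.

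It remains to verify the isomorphism types. If $\mathcal{S}\cong\mathcal{A}$, then $\mathcal{S}\models\phi$, so $\mathcal{S}$ has at least one genuine witness $\bar{a}$; the classes $C_{\bar a, n}$ for $n\in\omega$ then stay singletons forever, yielding infinitely many size-$1$ classes, while every $G_m$ and every $C_{\bar c, n}$ with $\bar c$ a non-witness is infinite, yielding infinitely many infinite classes, and no class is ever finite of size $\geq 2$; hence $\Gamma(\mathcal{S})\cong\hat{\mathcal{E}}_1$. If $\mathcal{S}\cong\mathcal{B}$, then $\mathcal{S}\models\neg\phi$, so \emph{no} tuple is a genuine witness, every $C_{\bar c, n}$ is eventually made infinite, and together with the $G_m$ this gives infinitely many infinite classes and no singleton class, so $\Gamma(\mathcal{S})\cong\mathcal{E}$. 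The main point requiring care --- and the reason for the two auxiliary devices --- is to pin down the output isomorphism type uniformly: the family $\{G_m\}$ secures infinitely many infinite classes even in the degenerate situation where $\mathcal{A}\models\forall\bar{x}\,\bigwedge_i\forall\bar y_i\,\varphi_i(\bar x,\bar y_i)$ (so that every tuple is a witness), while attaching infinitely many classes $C_{\bar c, n}$ to each candidate witness secures infinitely many singletons even when $\mathcal{A}$ has only finitely many witness tuples. Since $\hat{\mathcal{E}}_1\not\cong\mathcal{E}$, conditions (1) and (2) of a computable embedding follow.
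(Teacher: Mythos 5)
Your proof is correct and follows essentially the same route as the paper's: equivalence classes indexed by candidate witnesses (disjunct index plus tuple), kept as singletons until a counterexample literal---a positive, c.e.\ event in the input atomic diagram---is detected, at which point the class is made infinite, with infinitely many copies of each class (your index $n$) used to pin down the output isomorphism types. The one point where you go beyond the paper is the family of unconditionally infinite classes $G_m$: the paper's closing step (``simply produce infinitely many copies of each equivalence class'') tacitly assumes that on input a copy of $\mathcal{A}$ some class becomes infinite, which can fail in the degenerate case where every tuple is a genuine witness, and your $G_m$ padding closes exactly that gap, guaranteeing infinitely many infinite classes so that the output is genuinely $\hat{\mathcal{E}}_1$.
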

\begin{proof}
  Suppose that we have an effective listing
  $\{\alpha_{i,j}(x,y)\}_{i,j<\omega}$ of atomic formulas such that 
  \[\mathcal{A} \models \bigvee_i \exists x \bigwedge_j \forall y \alpha_{i,j}(x,y) \mbox{ and } \mathcal{B} \models \bigwedge_i \forall x \bigvee_j \exists y \neg \alpha_{i,j}(x,y).\]

  Suppose we are given an arbitrary structure $\mathcal{C}$ in the language of $\mathcal{A}$ and $\mathcal{B}$.
  We describe an enumeration operator $\Gamma$ which is provided with $\mathcal{C}$ as input.
  For every $c \in dom(\mathcal{C})$ the operator $\Gamma$ enumerates $c_{\langle{i,0}\rangle}$, for $i<\omega$, in the output structure $\Gamma(\mathcal{C})$, together
  with basic sentences saying that all of these $c_{\langle{i,0}\rangle}$ are in different equivalence classes.
  Moreover, if $\Gamma$ sees in the input structure a basic sentence of the form $\neg \alpha_{i,j}(c,d)$, it enumerates in the output structure
  the elements $c_{\langle{i,k}\rangle}$, for $k < \omega$, and basic sentences saying that they all belong to the equivalence class of $c_{\langle{i,0}\rangle}$.

  If $\mathcal{C} \cong \mathcal{A}$, then there is some $i < \omega$ and $c \in \mathcal{C}$ such that
  $\mathcal{C} \models \bigwedge_j \forall y \alpha_{i,j}(c,y)$.
  Thus, $c_{\langle{i,0}\rangle}$ forms an equivalence class with exactly one element.
  It follows that $\Gamma(\mathcal{A})$ produces an equivalence structure with \emph{at least} one equivalence class with exactly one element.

  If $\mathcal{C} \cong \mathcal{B}$, then for every natural number $i$ and element $c \in \mathcal{C}$, there is
  some $j$ and $d$ such that $\mathcal{C} \models \neg \alpha_{i,j}(c,d)$.
  By the construction of $\Gamma$, it follows that all $c_{\langle{i,k}\rangle}$, for $k < \omega$, form an equivalence class with infinitely many elements.
  Then $\Gamma(\mathcal{B})$ produces an equivalence structure in which every equivalence class contains infinitely many elements.

  Now it is easy to modify $\Gamma$ so that $\Gamma(\mathcal{A}) \cong \hat{\mathcal{E}}_1$ and $\Gamma(\mathcal{B}) \cong \mathcal{E}$.
  The new modified $\Gamma$ simply produces infinitely many copies of each equivalence class.
\end{proof}

\begin{corollary}
  \label{cor:formulas-equivalence}
  Suppose that $\mathcal{A}$ and $\mathcal{B}$ are structures for which there exist $\Sigma^c_2$ sentences $\phi$ and $\psi$ such that
  $\mathcal{A} \models \phi\ \&\ \neg\psi$ and $\mathcal{B} \models \neg \phi\ \&\ \psi$. Then
  \[\{\mathcal{A}, \mathcal{B}\} \leq_c \{\hat{\mathcal{E}}_1,\hat{\mathcal{E}}_{2}\}.\] 
\end{corollary}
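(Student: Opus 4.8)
The plan is to obtain the reduction by combining two applications of Proposition~\ref{prop:formula-equivalence}, one for each of the two sentences $\phi$ and $\psi$, and then taking a disjoint union of the resulting equivalence structures. The key point is that Proposition~\ref{prop:formula-equivalence} is genuinely one-sided: it only exploits that $\phi$ separates $\mathcal{A}$ from $\mathcal{B}$ in one direction. Since we now have sentences separating the two structures in \emph{both} directions, we can run that construction twice, with opposite orientations.

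First I would apply Proposition~\ref{prop:formula-equivalence} to the pair $(\mathcal{A},\mathcal{B})$ using $\phi$ (true in $\mathcal{A}$, false in $\mathcal{B}$), obtaining an enumeration operator $\Gamma_1$ with $\Gamma_1(\mathcal{A})\cong\hat{\mathcal{E}}_1$ and $\Gamma_1(\mathcal{B})\cong\mathcal{E}$. Second, I would apply a trivial variant of the same proposition to the pair $(\mathcal{B},\mathcal{A})$ using $\psi$ (true in $\mathcal{B}$, false in $\mathcal{A}$); the only change to the construction is that each equivalence class is started with \emph{two} witnessing elements rather than one, so that the distinguished classes which survive without being blown up to infinite size have size $2$. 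This yields an operator $\Gamma_2$ with $\Gamma_2(\mathcal{B})\cong\hat{\mathcal{E}}_2$ and $\Gamma_2(\mathcal{A})\cong\mathcal{E}$.

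Next I would define $\Gamma$ on an input structure $\mathcal{C}$ to be the disjoint union $\Gamma_1(\mathcal{C})\sqcup\Gamma_2(\mathcal{C})$, realized concretely by placing the two outputs on domains tagged with $0$ and $1$ respectively and never relating elements carrying different tags. This is plainly again an enumeration operator, and it inherits monotonicity from $\Gamma_1$ and $\Gamma_2$. The verification then reduces to the two absorption identities $\hat{\mathcal{E}}_1\sqcup\mathcal{E}\cong\hat{\mathcal{E}}_1$ and $\mathcal{E}\sqcup\hat{\mathcal{E}}_2\cong\hat{\mathcal{E}}_2$, each of which is immediate by counting equivalence classes according to size: forming a disjoint union with $\mathcal{E}$ only adds further classes of infinite size and leaves the collection of finite-size classes unchanged. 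Hence $\Gamma(\mathcal{A})\cong\hat{\mathcal{E}}_1$ and $\Gamma(\mathcal{B})\cong\hat{\mathcal{E}}_2$, and since $\hat{\mathcal{E}}_1\not\cong\hat{\mathcal{E}}_2$ the operator $\Gamma$ is a computable embedding.

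I do not expect a genuinely hard step here; the only point that requires care is the choice of class sizes. I must use size-$1$ classes for the $\phi$-operator and size-$2$ classes for the $\psi$-operator: were I to use size $1$ for both, the disjoint union would be isomorphic to $\hat{\mathcal{E}}_1$ on both $\mathcal{A}$ and $\mathcal{B}$, collapsing the two images and destroying the separation. Using two distinct finite sizes is exactly what survives the disjoint union with the all-infinite structure $\mathcal{E}$, and this is what makes the combination succeed.
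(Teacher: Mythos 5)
Your proposal is correct and takes essentially the same route as the paper: apply Proposition~\ref{prop:formula-equivalence} to $\phi$, apply its trivial modification (classes seeded with two elements) to $\psi$, and combine the two operators by taking a disjoint union of their outputs. The only difference is that you make explicit the absorption identities $\hat{\mathcal{E}}_1\sqcup\mathcal{E}\cong\hat{\mathcal{E}}_1$ and $\mathcal{E}\sqcup\hat{\mathcal{E}}_2\cong\hat{\mathcal{E}}_2$ and the need for two distinct finite class sizes, which the paper leaves implicit.
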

\begin{proof}
  For the formula $\phi$, we apply Proposition \ref{prop:formula-equivalence} and produce an enumeration operator $\Gamma_1$ such that
  $\{\mathcal{A},\mathcal{B}\} \leq_c \{\hat{\mathcal{E}}_1,\mathcal{E}\}$.
  It is trivial to modify the proof of Proposition \ref{prop:formula-equivalence} and apply it for the formula $\psi$ to obtain
  an enumeration operator $\Gamma_2$ such that
  $\{\mathcal{A},\mathcal{B}\} \leq_c \{\mathcal{E},\hat{\mathcal{E}}_2\}$.
  Then we combine the two operators into one by simply taking a disjoint union of their outputs.
\end{proof}

\begin{theorem}
  The pair $\{1+\eta, \eta+1\}$ is the greatest one under computable embedding in the $tc$-equivalence class of the pair $\{\omega,\omega^\star\}$.
\end{theorem}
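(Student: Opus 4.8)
The plan is to establish two facts: that $\{1+\eta,\eta+1\}$ genuinely lies in $\deg_{tc}(\{\omega,\omega^\star\})$, and that every pair in this $tc$-degree is $\leq_c$-below it; together these say it is the greatest element.

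For membership, I would invoke Theorem~\ref{theo:description} and verify condition~(ii) for $\mathcal{A}=1+\eta$ and $\mathcal{B}=\eta+1$ (these are infinite and non-isomorphic, as one has a least but no greatest element and the other has a greatest but no least element). Both orders are visibly computably presentable. To see $1+\eta\equiv_1\eta+1$, note that an $\exists$-sentence in the language of linear orders merely asserts that some finite configuration of points, with prescribed strict inequalities and equalities, is realized; since both $1+\eta$ and $\eta+1$ contain a dense suborder isomorphic to $\eta$, they realize exactly the same finite configurations and hence agree on all $\exists$-sentences. Finally, the $\Sigma^c_2$ sentence ``there is a least element'', namely $\exists x\,\forall y\,(x\le y)$, holds in $1+\eta$ but fails in $\eta+1$, while the symmetric sentence ``there is a greatest element'' separates them the other way; this gives~(\ref{equ:001-theories}). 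Hence the implication (ii)$\Rightarrow$(i) of Theorem~\ref{theo:description} yields $\{1+\eta,\eta+1\}\equiv_{tc}\{\omega,\omega^\star\}$.

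For the upper bound, I would take an arbitrary pair $\{\mathcal{A},\mathcal{B}\}$ with $\{\mathcal{A},\mathcal{B}\}\equiv_{tc}\{\omega,\omega^\star\}$ and apply the implication (i)$\Rightarrow$(ii) of Theorem~\ref{theo:description}. Condition~(ii) supplies $\Sigma^c_2$ sentences $\varphi,\psi$ with $\mathcal{A}\models\varphi\ \&\ \neg\psi$ and $\mathcal{B}\models\neg\varphi\ \&\ \psi$, obtained by reindexing the witnesses of the two non-empty set differences in~(\ref{equ:001-theories}). This is exactly the hypothesis of Corollary~\ref{cor:formulas-equivalence}, so $\{\mathcal{A},\mathcal{B}\}\leq_c\{\hat{\mathcal{E}}_1,\hat{\mathcal{E}}_2\}$. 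Composing with Corollary~\ref{cor:equivalence-to-eta}, which gives $\{\hat{\mathcal{E}}_1,\hat{\mathcal{E}}_2\}\leq_c\{1+\eta,\eta+1\}$, and using transitivity of $\leq_c$, I conclude $\{\mathcal{A},\mathcal{B}\}\leq_c\{1+\eta,\eta+1\}$.

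The genuine technical work has already been done in the preceding propositions, so the theorem is essentially an assembly of Theorem~\ref{theo:description}, Corollary~\ref{cor:formulas-equivalence} and Corollary~\ref{cor:equivalence-to-eta}. The one point demanding care is the bookkeeping of the structure-to-structure correspondence under each reduction: one must check that $\varphi$ and $\psi$ are oriented consistently with the convention $\Gamma(\mathcal{A})\cong\mathcal{C}$, $\Gamma(\mathcal{B})\cong\mathcal{D}$, so that the composite operator is a genuine computable embedding, preserving non-isomorphism in both directions, rather than merely a map into the target class. I expect this alignment, together with the short $\equiv_1$ verification above, to be the only non-mechanical part of the argument.
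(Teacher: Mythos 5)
Your proof is correct and takes essentially the same route as the paper: Theorem~\ref{theo:description} supplies the separating $\Sigma^c_2$ sentences, Corollary~\ref{cor:formulas-equivalence} reduces the pair to $\{\hat{\mathcal{E}}_1,\hat{\mathcal{E}}_2\}$, and Corollary~\ref{cor:equivalence-to-eta} (which the paper invokes through Theorem~\ref{th:eta:equivalence}) completes the embedding into $\{1+\eta,\eta+1\}$. Your explicit verification that $\{1+\eta,\eta+1\}$ itself belongs to $\deg_{tc}(\{\omega,\omega^\star\})$ is left implicit in the paper's proof, and including it is a sound (if minor) addition.
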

\begin{proof}
  Consider $\{\mathcal{A},\mathcal{B}\} \equiv_{tc} \{\omega,\omega^\star\}$.
  By Theorem~\ref{theo:description}, there exist $\Sigma^c_2$ sentences $\phi$ and $\psi$ such that $\mathcal{A} \models \phi\ \&\ \neg \psi$ and $\mathcal{B} \models \neg\phi\ \&\ \psi$.
  By combining Corollary \ref{cor:formulas-equivalence} with Theorem \ref{th:eta:equivalence}, we conclude that
  $\{\mathcal{A},\mathcal{B}\} \leq_{c} \{1 + \eta , \eta + 1 \}$.
\end{proof}

\section{An infinite chain of pairs}
\label{sect:infinite:chain}

In this section we work only with structures in the language of linear orders.
We denote by $\alpha$, $\beta$, $\gamma$ finite linear orders.
For an enumeration operator $\Gamma$, a finite linear order $\alpha$,
and an atomic formula $\varphi(\bar{x})$, and a tuple $\bar{a}$, we define
\[\alpha \Vdash_\Gamma \phi(\bar{a})\ \stackrel{\text{def}}{\iff}\ \bar{a} \in \Gamma(\alpha)\ \&\ \neg(\exists \beta \supseteq \alpha)[\  \Gamma(\beta) \models \neg\phi(\bar{a})\ ].\]
Here we write $\bar{a} \in \Gamma(\alpha)$ for $\Gamma(\alpha) \models \bigwedge^n_{i=1} a_i = a_i$, or in other
words, the sentences $a_i = a_i$ are enumerated by $\Gamma(\alpha)$.

Moreover, we will say that $\alpha$ {\em decides} $x$ and $y$ if $\Gamma(\alpha)\models x < y$ or $\Gamma(\alpha) \models y \leq x$.
For two finite linear orders $\alpha$ and $\beta$ with disjoint domains, we will write $\alpha + \beta$ for the finite linear order
obtained by merging $\alpha$ and $\beta$ so that the greatest element of $\alpha$ is less than the least element of $\beta$.
Following Rosenstein~\cite{Ros82}, we will use the notation $\sum_{i\in\omega} \alpha_i$ for the linear order $\alpha_0 + \alpha_1 + \cdots + \alpha_n + \cdots$,
and the notation $\sum_{i\in\omega^\star} \alpha_i$ for the linear order $\cdots + \alpha_n + \cdots + \alpha_1 + \alpha_0$.

\begin{proposition}
  \label{prop:force:either}
  Suppose $x,y \in \Gamma(\alpha)$ and $x \neq y$. Then
  \[\alpha \Vdash_\Gamma x < y \text{ or } \alpha \Vdash_\Gamma y < x.\]
\end{proposition}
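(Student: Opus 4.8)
The plan is to argue by contradiction, exploiting the monotonicity of $\Gamma$ together with the fact that every finite linear order extends to a member of the input class, whose $\Gamma$-image is a genuine linear order. So I would suppose that neither $\alpha \Vdash_\Gamma x<y$ nor $\alpha \Vdash_\Gamma y<x$ holds. Since $x,y \in \Gamma(\alpha)$, unfolding the definition of $\Vdash_\Gamma$ produces finite linear orders $\beta \supseteq \alpha$ and $\gamma \supseteq \alpha$ with $\Gamma(\beta) \models \neg(x<y)$ and $\Gamma(\gamma) \models \neg(y<x)$. Because $x \neq y$ and the relevant outputs are linear orders, $\neg(x<y)$ amounts to $y<x$ and $\neg(y<x)$ amounts to $x<y$; thus $\Gamma(\beta)$ enumerates $y<x$ while $\Gamma(\gamma)$ enumerates $x<y$ (note that $x,y \in \Gamma(\beta)\cap\Gamma(\gamma)$ by monotonicity, so these sentences speak about elements that are already present).

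The heart of the argument is to merge $\beta$ and $\gamma$ into a single finite linear order $\delta$ with $\beta \subseteq \delta$ and $\gamma \subseteq \delta$. Granting such a $\delta$, monotonicity gives $\Gamma(\beta) \subseteq \Gamma(\delta)$ and $\Gamma(\gamma) \subseteq \Gamma(\delta)$, so $\Gamma(\delta)$ enumerates both $x<y$ and $y<x$. I would then extend $\delta$ to an infinite linear order $\mathcal{S}$ lying in the class on which $\Gamma$ is a computable embedding; since $\Gamma(\delta) \subseteq \Gamma(\mathcal{S})$ and $\Gamma(\mathcal{S})$ is the atomic diagram of an actual linear order, this is impossible, as a linear order cannot satisfy both $x<y$ and $y<x$. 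This contradiction would establish the proposition.

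The delicate step, and where I expect the main difficulty, is the construction of the common extension $\delta$. Amalgamation is immediate when $\beta$ and $\gamma$ share no elements beyond those of $\alpha$: one only has to interleave, inside each interval determined by $\dom(\alpha)$, the finitely many new points of $\beta$ with those of $\gamma$, and any such interleaving respects both orders, so $\delta$ exists. The obstacle is that a priori $\beta$ and $\gamma$ may reuse the same natural number outside $\dom(\alpha)$ in incompatible positions, in which case no common extension preserving element identities exists. I would therefore first try to reduce to the disjoint case by renaming the points of $\gamma$ outside $\dom(\alpha)$ via an order-isomorphism fixing $\alpha$ pointwise and sending them to fresh values disjoint from $\dom(\beta)$. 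The point requiring care is that, after this renaming, $\Gamma$ must still enumerate $x<y$ for the \emph{same} output elements $x,y \in \Gamma(\alpha)$. An alternative, possibly cleaner, route is to work not with $\beta$ and $\gamma$ themselves but with the finite subdiagrams $D \subseteq D(\beta)$ and $D' \subseteq D(\gamma)$ that actually trigger the enumeration of $y<x$ and $x<y$, and to amalgamate $D \cup D' \cup D(\alpha)$ directly into the diagram of a finite linear order $\delta$; here the only way amalgamation can fail is a cycle through a shared new element, so the whole proof rests on securing disjointness of the new elements of the two witnesses.
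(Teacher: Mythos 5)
Your overall strategy (negate both forcings, extract witnesses, amalgamate, invoke monotonicity, and locate the contradiction inside the diagram of an actual linear order obtained by extending to a member of the class) is the same as the paper's, but the step you yourself flag as delicate is a genuine gap, and neither of your proposed repairs closes it. If the two witnesses $\beta$ and $\gamma$ share elements outside $\dom(\alpha)$, a common extension $\delta$ may simply fail to exist, and renaming cannot rescue this: an enumeration operator is a c.e.\ set of pairs consisting of a finite set of input sentences and an output sentence about \emph{concrete} natural numbers, so it has no equivariance under permutations of the input domain. After replacing $\gamma$ by an isomorphic copy $\gamma^{*}$ with fresh points, the finite subdiagram $D'\subseteq D(\gamma)$ that triggered the enumeration of $x<y$ is no longer contained in $D(\gamma^{*})$, and nothing forces $\Gamma(\gamma^{*})$ to enumerate $x<y$ (or indeed anything) about the same output elements $x,y$. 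Your alternative route, amalgamating the trigger subdiagrams $D\cup D'\cup D(\alpha)$ directly, runs into exactly the same shared-element obstruction, which you acknowledge (``the whole proof rests on securing disjointness'') but do not resolve.

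The paper circumvents this with one extra idea that is missing from your proposal: do not amalgamate the two witnesses with each other at all. Instead, construct a \emph{third} extension $\beta_{0}\supseteq\alpha$ whose new elements are fresh, i.e., disjoint from both witnesses outside $\dom(\alpha)$, and which \emph{decides} $x$ and $y$. Such a $\beta_{0}$ exists because $x,y\in\Gamma(\alpha)$: extend $\alpha$ by fresh elements to a copy $\mathcal{S}$ of one of the two input structures; then $\Gamma(\mathcal{S})$ is the diagram of a linear order containing $x$ and $y$, so by the finite-use property of enumeration operators some finite $\beta_{0}\subseteq\mathcal{S}$ already enumerates $x<y$ or $y<x$. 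Whichever way $\beta_{0}$ decides, it conflicts with one of the two given witnesses, and that single amalgamation is always possible because $\beta_{0}$ meets that witness exactly in $\alpha$; monotonicity then puts both $x<y$ and $y<x$ into one output, yielding the contradiction. With this modification your argument becomes precisely the paper's proof.
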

\begin{proof}
  Towards a contradiction, assume that
  \[\alpha \not\Vdash_\Gamma x < y\text{ and }\alpha \not\Vdash_\Gamma y < x.\]
  This means that there is some $\alpha' \supset \alpha$ such that $\Gamma(\alpha') \models y < x$ and
  some $\alpha'' \supset \alpha$ such that $\Gamma(\alpha'') \models x < y$.
  Since $x,y \in \Gamma(\alpha)$, for any copy $\A$ of $\omega$ which extends $\alpha$ we have $\Gamma(\A) \models x < y$ or $\Gamma(\A) \models y < x$.
  Choose such a copy $\A$ of $\omega$ so that $\A \cap \alpha' = \A \cap \alpha'' = \alpha$.
  By compactness of enumeration operators, there is some finite part $\beta$ of $\A$, such that $\alpha'\cap \beta = \alpha'' \cap \beta = \alpha$, which decides $x$ and $y$.
  Without loss of generality, suppose that $\Gamma(\beta) \models x < y$.
  Then let $\gamma = \beta \cup \alpha'$. By monotonicity of $\Gamma$,
  since $\alpha' \subset \gamma$, we have $\Gamma(\gamma)\models y < x$ and since $\beta \subset \gamma$, we have $\Gamma(\gamma) \models x < y$.
  We reach a contradiction.
\end{proof}

\begin{corollary}
  \label{cor:force:either:general}
  Let $x_0,\dots,x_n$ be distinct elements and $x_0,\dots,x_n \in \Gamma(\alpha)$.
  Then there is some permutation $\pi$ of $(0,\dots,n)$ such that
  \[ \alpha \Vdash_\Gamma x_{\pi(0)} < x_{\pi(1)} < \cdots < x_{\pi(n)}.\]
\end{corollary}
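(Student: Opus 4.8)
The plan is to read the forcing relation as a binary relation on the finite set $\{x_0,\dots,x_n\}$ and show that it is a strict linear order; the desired permutation $\pi$ is then simply the one listing the elements in increasing order. Concretely, I would define $x_i \prec x_j$ to mean $\alpha \Vdash_\Gamma x_i < x_j$. Proposition~\ref{prop:force:either} says exactly that $\prec$ is \emph{total}: for every pair of distinct indices $i,j$, at least one of $x_i \prec x_j$, $x_j \prec x_i$ holds. It remains to verify that $\prec$ is antisymmetric and transitive, after which a finite total, antisymmetric, transitive relation can be enumerated in increasing order, yielding $\pi$ with $\alpha \Vdash_\Gamma x_{\pi(i)} < x_{\pi(j)}$ for all $i<j$, in particular for the consecutive pairs displayed in the chain.

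The one auxiliary fact I would record first is that if $\alpha \Vdash_\Gamma x<y$ and $\beta \supseteq \alpha$ \emph{decides} $x$ and $y$, then $\Gamma(\beta) \models x<y$: by the definition of forcing no extension of $\alpha$ can satisfy $\neg(x<y)$, so the only option left for a deciding $\beta$ is $\Gamma(\beta)\models x<y$. Antisymmetry is then immediate. If both $\alpha \Vdash_\Gamma x<y$ and $\alpha \Vdash_\Gamma y<x$ held, pick any $\beta \supseteq \alpha$ deciding the pair $x,y$; the auxiliary fact forces both $\Gamma(\beta)\models x<y$ and $\Gamma(\beta)\models y<x$, contradicting that $\Gamma(\beta)$ is (part of) the atomic diagram of a linear order.

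For transitivity, suppose $\alpha \Vdash_\Gamma x<y$ and $\alpha \Vdash_\Gamma y<z$ but, towards a contradiction, $\alpha \not\Vdash_\Gamma x<z$. Since $x,z \in \Gamma(\alpha)$, the failure of forcing means there is some $\gamma \supseteq \alpha$ with $\Gamma(\gamma)\models z<x$. Now extend $\gamma$ to some $\gamma' \supseteq \gamma$ that decides all three pairs among $x,y,z$, exactly as in the proof of Proposition~\ref{prop:force:either}. Because $\gamma' \supseteq \alpha$ and $\gamma'$ decides $x,y$ and $y,z$, the auxiliary fact gives $\Gamma(\gamma')\models x<y$ and $\Gamma(\gamma')\models y<z$, hence $\Gamma(\gamma')\models x<z$ by transitivity of the order $\Gamma(\gamma')$; but $\gamma' \supseteq \gamma$ together with monotonicity of $\Gamma$ keeps $\Gamma(\gamma')\models z<x$, a contradiction. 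Thus $\alpha \Vdash_\Gamma x<z$, and $\prec$ is transitive.

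I expect the transitivity step to be the only real content, and its crux is the same \emph{extend-to-decide} maneuver combined with monotonicity of $\Gamma$ that already drives Proposition~\ref{prop:force:either}; everything else is bookkeeping about finite linear orders. Once $\prec$ is known to be a strict linear order on $\{x_0,\dots,x_n\}$, listing its elements in increasing order produces the permutation $\pi$ and the whole forced chain.
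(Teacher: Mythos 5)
Your proposal is correct, but it follows a genuinely different route from the paper's. The paper proves this corollary by contradiction using only Proposition~\ref{prop:force:either}: it assumes no permutation yields a forced chain and asserts that comparing suitable permutations produces a single pair with $\alpha \not\Vdash_\Gamma x_i < x_j$ and $\alpha \not\Vdash_\Gamma x_j < x_i$, contradicting totality; that step is in essence the combinatorial fact that every finite tournament has a Hamiltonian path, and the paper leaves it almost entirely implicit. You instead prove that the relation $x \prec y \iff \alpha \Vdash_\Gamma x < y$ is a strict linear order: totality is Proposition~\ref{prop:force:either}; your ``auxiliary fact'' and the antisymmetry it yields are exactly Proposition~\ref{prop:force:excluded-middle}, which the paper only establishes \emph{after} this corollary; and transitivity, which the paper never isolates, you obtain by the extend-to-decide maneuver plus monotonicity. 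Two small touch-ups to your write-up: failure of $\alpha \Vdash_\Gamma x < z$ gives an extension $\gamma$ with $\Gamma(\gamma) \models z \leq x$ (equivalent here, since $x \neq z$); and since $\Gamma(\gamma')$ is merely a finite set of atomic sentences rather than an honest order, the final contradiction is best phrased by extending $\gamma'$ to a full copy of $\mathcal{A}$ or $\mathcal{B}$, whose $\Gamma$-image is a genuine linear order that would have to contain $x<y$, $y<z$, and $z \leq x$ simultaneously --- impossible. What your route buys is a strictly stronger conclusion: the sorted permutation forces $x_{\pi(i)} < x_{\pi(j)}$ for \emph{all} $i<j$, not merely consecutive pairs, which is the form implicitly relied on later (e.g., in Corollary~\ref{cor:force:excluded-middle:general} and the chaining arguments of Propositions~\ref{prop:finite:intersection} and~\ref{prop:ascending:chain}); what the paper's route buys is brevity, needing only totality --- at the cost of a suppressed combinatorial step that your argument renders unnecessary.
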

\begin{proof}
  Assume that for all permutations $\pi$,
  \[ \alpha \not\Vdash_\Gamma x_{\pi(0)} < x_{\pi(1)} < \cdots < x_{\pi(n)}.\]
  Consider two distinct permutations which produce linear orders which differ at some two positions, say
  $i$ and $j$. Then we have
  \[\alpha \not\Vdash_\Gamma x_i < x_j\text{ and }\alpha \not\Vdash_\Gamma x_j < x_i.\]
  Now we apply Proposition \ref{prop:force:either} and we reach a contradiction.
\end{proof}

\begin{proposition}
  \label{prop:force:excluded-middle}
  Suppose $x, y \in \Gamma(\alpha)$ and $x \neq y$. Then
  \[\alpha \Vdash_\Gamma x < y \iff \alpha \not\Vdash_\Gamma y < x.\]
\end{proposition}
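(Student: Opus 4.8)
The plan is to split the biconditional into the two elementary pieces into which an ``exclusive or'' decomposes. Abbreviate by $A$ the statement $\alpha \Vdash_\Gamma x < y$ and by $B$ the statement $\alpha \Vdash_\Gamma y < x$. Then the asserted equivalence $A \iff \neg B$ is propositionally the same as the conjunction of $A \vee B$ (``at least one of the two forcings holds'') and $\neg(A \wedge B)$ (``not both hold''). The first conjunct $A \vee B$ is exactly the content of Proposition~\ref{prop:force:either}, which I may invoke directly. So the entire argument reduces to establishing $\neg(A \wedge B)$.

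To prove $\neg(A \wedge B)$, I would argue by contradiction, reusing the deciding-extension device from the proof of Proposition~\ref{prop:force:either}. Assume both $A$ and $B$ hold, and choose a finite $\beta \supseteq \alpha$ that \emph{decides} $x$ and $y$; such a $\beta$ exists for the same reason as in Proposition~\ref{prop:force:either}, namely $\alpha$ completes to a linear order on which $\Gamma$ outputs a genuine linear order, and that order commits to the position of $x$ relative to $y$ at some finite stage. By the definition of deciding, either $\Gamma(\beta) \models x < y$ or $\Gamma(\beta) \models y \leq x$. In the first case, since the output approximates the atomic diagram of a linear order, $\Gamma(\beta) \models \neg(y < x)$, so $\beta$ witnesses $\alpha \not\Vdash_\Gamma y < x$, contradicting $B$. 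In the second case $\Gamma(\beta) \models \neg(x < y)$ by the very meaning of $y \leq x$, so $\beta$ witnesses $\alpha \not\Vdash_\Gamma x < y$, contradicting $A$. Either branch is contradictory, which yields $\neg(A \wedge B)$.

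Combining the two pieces delivers both implications at once: if $A$ holds then $\neg(A \wedge B)$ forces $\neg B$, and if $\neg B$ holds then $A \vee B$ forces $A$. The only step carrying any real weight is the production of the deciding extension $\beta$, which quietly relies on $\Gamma$ being a computable embedding into a class of linear orders together with the finiteness of enumeration; since this is the identical move already made inside the proof of Proposition~\ref{prop:force:either}, I would phrase it in exactly the same way to keep the two arguments consistent. Everything else is purely propositional bookkeeping.
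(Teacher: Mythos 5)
Your proof is correct and is essentially the paper's own argument: the paper establishes the forward implication by taking a deciding extension $\beta \supseteq \alpha$ (which, under $\alpha \Vdash_\Gamma x < y$, must satisfy $\Gamma(\beta) \models x < y$ and hence witnesses $\alpha \not\Vdash_\Gamma y < x$), and the backward implication by citing Proposition~\ref{prop:force:either}. Your repackaging of the biconditional as the conjunction of $A \vee B$ and $\neg(A \wedge B)$ is only a propositional rearrangement of the same two ingredients (the deciding-extension device and Proposition~\ref{prop:force:either}), so no substantive difference or gap exists.
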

\begin{proof}
  $(1) \to (2)$. Since $x,y \in \Gamma(\alpha)$, there is some $\beta \supseteq \alpha$ which decides $x$ and $y$.
  Since $\alpha \Vdash_\Gamma x < y$, it follows that we must have $\Gamma(\beta)\models x < y$ and hence $\alpha \not \Vdash_\Gamma y < x$.

  $(2) \to (1)$. Suppose that $\alpha \not\Vdash_\Gamma y < x$. By Proposition \ref{prop:force:either}, we have $\alpha \Vdash_\Gamma x < y$.
\end{proof}

\begin{corollary}\label{cor:force:excluded-middle:general}
  Let $x_0,\dots,x_n$ be distinct elements and $x_0,\dots,x_n \in \Gamma(\alpha)$.
  Then there exists \emph{exactly one} permutation $\pi$ of $(0,\dots,n)$ for which
  \[ \alpha \Vdash_\Gamma x_{\pi(0)} < x_{\pi(1)} < \cdots < x_{\pi(n)}.\]
\end{corollary}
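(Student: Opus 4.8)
The plan is to show that the binary relation $x \prec y \stackrel{\text{def}}{\iff} \alpha \Vdash_\Gamma x < y$ is a \emph{strict linear order} on the finite set $\{x_0,\dots,x_n\}$; once this is established, the set has a unique $\prec$-increasing enumeration, and the permutation realizing it is the unique $\pi$ sought. Existence of at least one such $\pi$ is already provided by Corollary~\ref{cor:force:either:general}, so the whole weight of the argument falls on uniqueness, which I would reduce to a single structural fact about $\prec$. First I would record that $\prec$ satisfies trichotomy on distinct elements: by Proposition~\ref{prop:force:either} at least one of $x \prec y$, $y \prec x$ holds, and by Proposition~\ref{prop:force:excluded-middle} never both. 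It is also irreflexive, since $x < x$ can never be forced.

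The main obstacle, and the only genuinely nontrivial step, is \emph{transitivity} of $\prec$: if $\alpha \Vdash_\Gamma x < y$ and $\alpha \Vdash_\Gamma y < z$, then $\alpha \Vdash_\Gamma x < z$. I would argue by contradiction. If $x = z$, then $\alpha \Vdash_\Gamma x < y$ and $\alpha \Vdash_\Gamma y < x$ already contradict Proposition~\ref{prop:force:excluded-middle}, so $x \neq z$; and if $\alpha \not\Vdash_\Gamma x < z$, then Proposition~\ref{prop:force:either} yields $\alpha \Vdash_\Gamma z < x$. Now, since $x,y,z \in \Gamma(\alpha)$, I would extend $\alpha$ step by step to a single finite linear order $\beta \supseteq \alpha$ that decides all three pairs $(x,y)$, $(y,z)$, $(z,x)$ simultaneously. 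Because each of the three relations is forced, $\Gamma(\beta)$ must contain $x < y$, $y < z$, and $z < x$, i.e.\ a $3$-cycle inside the finite diagram $\Gamma(\beta)$. This is impossible, since every finite output of $\Gamma$ on a finite linear order is consistent with a genuine linear order — the same consistency already used implicitly in the proof of Proposition~\ref{prop:force:either}, where a $2$-cycle is ruled out. The delicate points are exactly that one can decide several pairs within one common extension $\beta$ and that finite approximations never enumerate a cycle; these are what make the step go through.

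Finally I would assemble the pieces. By trichotomy, irreflexivity, and transitivity, $\prec$ is a strict linear order on the finite set $\{x_0,\dots,x_n\}$, hence there is exactly one enumeration $x_{\pi(0)} \prec x_{\pi(1)} \prec \dots \prec x_{\pi(n)}$, that is, a unique permutation $\pi$. To match this with the statement of the corollary, I would note that forcing the chain $x_{\pi(0)} < \dots < x_{\pi(n)}$ is the same as forcing each consecutive inequality $x_{\pi(k)} < x_{\pi(k+1)}$ (forcing a conjunction of atomic sentences amounts to forcing each conjunct), which says precisely that $\pi$ lists the elements $\prec$-increasingly on consecutive pairs; transitivity then propagates this to all pairs. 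Thus any permutation $\pi'$ whose chain is forced must enumerate $\{x_0,\dots,x_n\}$ in $\prec$-increasing order and therefore coincides with $\pi$, which gives the desired uniqueness.
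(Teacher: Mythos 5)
Your proof is correct, and its skeleton matches the paper's: existence comes from Corollary~\ref{cor:force:either:general}, and uniqueness is ultimately reduced to Proposition~\ref{prop:force:excluded-middle}. The difference lies in how that reduction is made. The paper argues directly: if two distinct permutations were both forced, their chains would order some pair $x_i,x_j$ oppositely, whence $\alpha \Vdash_\Gamma x_i < x_j$ and $\alpha \Vdash_\Gamma x_j < x_i$, contradicting Proposition~\ref{prop:force:excluded-middle}. That is shorter, but it silently treats forcing of a chain as forcing of \emph{every} pairwise inequality the chain entails; if the disagreeing pair is consecutive in neither chain (e.g.\ chains $x_1 < x_2 < x_3$ and $x_3 < x_1 < x_2$, whose disagreeing pairs $(x_1,x_3)$ and $(x_2,x_3)$ are each non-consecutive in one of the two chains), the literal definition of forcing a conjunction of consecutive inequalities does not immediately yield this. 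Your transitivity lemma for the relation $x \prec y \iff \alpha \Vdash_\Gamma x < y$ is exactly what licenses that step, so your route is a rigorous completion of the paper's terse argument: with trichotomy (Propositions~\ref{prop:force:either} and~\ref{prop:force:excluded-middle}) and transitivity, $\prec$ is a strict linear order on a finite set, and uniqueness of the increasing enumeration is immediate. Your proof of transitivity is also sound and uses only tools the paper already relies on: a common finite extension $\beta \supseteq \alpha$ deciding all three pairs exists by iterating the ``decides'' step from the proof of Proposition~\ref{prop:force:excluded-middle} and invoking monotonicity, and the resulting $3$-cycle in $\Gamma(\beta)$ is impossible because $\beta$ extends to a copy of an input structure whose $\Gamma$-image is a linear order --- the same consistency fact that underlies the contradiction in Proposition~\ref{prop:force:either}. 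The only cost of your approach is length; what it buys is an explicit treatment of non-consecutive pairs, a point the paper's two-line proof glosses over.
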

\begin{proof}
  By Corollary \ref{cor:force:either:general}, there is some permutation $\pi$ such that
  \[ \alpha \Vdash_\Gamma x_{\pi(0)} < x_{\pi(1)} < \cdots < x_{\pi(n)}.\]
  Assume that there is another permutation $\rho$ for which
  \[ \alpha \Vdash_\Gamma x_{\rho(0)} < x_{\rho(1)} < \cdots < x_{\rho(n)}.\]
  Let us say that these two permutation produce linear orders which differ at positions $i$ and $j$.
  It follows that
  \[ \alpha \Vdash_\Gamma x_i < x_j\text{ and } \alpha \Vdash_\Gamma x_j < x_i.\]
  Now we reach a contradiction with Proposition \ref{prop:force:excluded-middle}.
\end{proof}

\begin{proposition}
  \label{prop:alpha:finite}
  Suppose that $\Gamma : \{\mathcal{A},\mathcal{B}\} \leq_c \{\mathcal{C},\mathcal{D}\}$, where
  $\mathcal{C}$ has no infinite descending chains, and $\mathcal{D}$ has no infinite ascending chains.
  Then $\Gamma(\alpha)$ is finite for any finite linear order $\alpha$.
\end{proposition}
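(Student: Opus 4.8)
The plan is to argue by contradiction: I would assume that $\Gamma(\alpha)$ is infinite for some finite linear order $\alpha$ and derive that the sub-order of $\mathcal{C}$ (equivalently, of $\mathcal{D}$) carried by $\Gamma(\alpha)$ is an infinite linear order possessing neither an infinite ascending chain nor an infinite descending chain, which is impossible. Since the language contains only $<$ and $=$ and our structures have domains in $\omega$ with genuine equality, $\Gamma(\alpha)$ being infinite forces its underlying set $X = \{a : (a=a) \in \Gamma(\alpha)\}$ to be infinite.

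First I would place $\Gamma(\alpha)$ simultaneously inside both output structures. Because $\mathcal{A}$ and $\mathcal{B}$ are infinite linear orders, the finite linear order $\alpha$ embeds into each of them, so by choosing copies appropriately I obtain $\mathcal{S}_A \cong \mathcal{A}$ and $\mathcal{S}_B \cong \mathcal{B}$ with $\alpha \subseteq \mathcal{S}_A$ and $\alpha \subseteq \mathcal{S}_B$. By the monotonicity of computable embeddings \cite[Proposition 1.1]{CCKM04}, together with the convention $\Gamma(\mathcal{A}) \cong \mathcal{C}$ and $\Gamma(\mathcal{B}) \cong \mathcal{D}$, this yields $\Gamma(\alpha) \subseteq \Gamma(\mathcal{S}_A) \cong \mathcal{C}$ and $\Gamma(\alpha) \subseteq \Gamma(\mathcal{S}_B) \cong \mathcal{D}$. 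Thus $X$ embeds as a sub-linear-order into both $\mathcal{C}$ and $\mathcal{D}$.

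The subtlety to clear up is that $\mathcal{C}$ and $\mathcal{D}$ could, a priori, order $X$ differently, since $\Gamma(\alpha)$ need not decide every pair of its elements. Here I would invoke the forcing machinery: for distinct $x,y \in X$, Proposition \ref{prop:force:either} gives $\alpha \Vdash_\Gamma x < y$ or $\alpha \Vdash_\Gamma y < x$, and a forced inequality persists to every finite extension of $\alpha$, hence to the infinite outputs $\Gamma(\mathcal{S}_A)$ and $\Gamma(\mathcal{S}_B)$ (if, say, $\Gamma(\mathcal{S}_A) \models y < x$, then some finite $\beta \subseteq \mathcal{S}_A$ with $\beta \supseteq \alpha$ witnesses $\Gamma(\beta) \models \neg(x<y)$, contradicting the forcing). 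Consequently $\mathcal{C}$ and $\mathcal{D}$ induce one and the same linear order on $X$, namely the order forced by $\alpha$, which is genuinely a linear order by Corollary \ref{cor:force:excluded-middle:general}. Equivalently, any infinite ascending chain found in $\mathcal{C}$ is forced by $\alpha$ and so reappears as an ascending chain in $\mathcal{D}$, and symmetrically for descending chains.

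The final and genuinely essential ingredient is the combinatorial fact that \emph{an infinite linear order always contains an infinite ascending chain or an infinite descending chain}: if it had no infinite descending chain it would be a well-order, and an infinite well-order contains an initial segment of order type $\omega$, i.e.\ an infinite ascending chain. Applying this to the infinite linear order $X$: an infinite descending chain in $X$ is an infinite descending chain of $\mathcal{C}$, contradicting that $\mathcal{C}$ has no infinite descending chains; an infinite ascending chain in $X$ is an infinite ascending chain of $\mathcal{D}$, contradicting that $\mathcal{D}$ has no infinite ascending chains. Either alternative is impossible, so $\Gamma(\alpha)$ must be finite. I expect the main obstacle to be precisely the reconciliation in the third paragraph — verifying via forcing that the two output orders agree on $X$, so that the chain-freeness hypotheses on $\mathcal{C}$ and $\mathcal{D}$ may be applied to a single linear order — after which the combinatorial lemma delivers the contradiction immediately.
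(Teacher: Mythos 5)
Your proof is correct and takes essentially the same route as the paper's: both arguments use the forcing machinery (Proposition~\ref{prop:force:either} and Corollary~\ref{cor:force:excluded-middle:general}) to pin down a single linear order on the infinitely many elements of $\Gamma(\alpha)$, extract an infinite ascending or descending chain, and then use monotonicity to transport that chain into $\Gamma(\hat{\mathcal{B}})\cong\mathcal{D}$ (ascending case) or $\Gamma(\hat{\mathcal{A}})\cong\mathcal{C}$ (descending case), contradicting the chain conditions. The only difference is presentational: you spell out two steps the paper leaves implicit, namely the persistence of forced facts to infinite extensions of $\alpha$ and the dichotomy that every infinite linear order contains an infinite monotone chain.
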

\begin{proof}
  Towards a contradiction, assume the opposite and choose distinct elements $x_i \in \Gamma(\alpha)$, for $i < \omega$.
  By Corollary~\ref{cor:force:excluded-middle:general}, we have either 
  \[ (\forall i < \omega)[\ \alpha \Vdash_\Gamma x_{i} < x_{i+1}\ ] \text{ or } (\forall i < \omega)[\ \alpha \Vdash_\Gamma x_{i+1} < x_i\ ].\]
  In the first case, we extend $\alpha$ to a copy $\hat{\mathcal{B}}$ of $\mathcal{B}$. Then
  $\Gamma(\hat{\mathcal{B}}) \models \bigwedge_{i<\omega} x_i < x_{i+1}$,
  which is a contradiction. In the second case, we extend $\alpha$ to a copy $\hat{\mathcal{A}}$ of $\mathcal{A}$ and again reach a contradiction.
\end{proof}

For any $\Gamma$ satisfying the conditions of Proposition~\ref{prop:alpha:finite}, there are infinitely many finite linear orders $\alpha$ such that $\Gamma(\alpha)\neq \emptyset$.
In what follows, we will always suppose that we consider only such finite linear orders $\alpha$.

\begin{proposition}
  \label{prop:alpha:beta}
  Suppose that $\alpha$ and $\beta$ are finite linear orders with disjoint domains,
  and $x,y \in \Gamma(\alpha) \cap \Gamma(\beta)$, $x \neq y$. Then
  \[\alpha \Vdash_\Gamma x < y\ \text{iff}\ \beta \Vdash_\Gamma x < y.\]
\end{proposition}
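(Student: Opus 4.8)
The plan is to reduce both sides to a common finite extension of $\alpha$ and $\beta$ and then exploit the fact that, once an order relation is forced, this persists to larger finite orders. Since $\alpha$ and $\beta$ have disjoint domains, I first form the finite linear order $\gamma = \alpha + \beta$. As an atomic diagram $\gamma$ contains both $\alpha$ and $\beta$ as substructures, so $\alpha \subseteq \gamma$ and $\beta \subseteq \gamma$, and by monotonicity of $\Gamma$ we get $\Gamma(\alpha) \cup \Gamma(\beta) \subseteq \Gamma(\gamma)$; in particular $x,y \in \Gamma(\gamma)$. Note that any common extension of $\alpha$ and $\beta$ would serve equally well; $\alpha + \beta$ is simply the most convenient choice.

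The key auxiliary observation is \emph{upward persistence of forcing}: if $\sigma \Vdash_\Gamma x < y$ and $\tau \supseteq \sigma$ is a finite linear order with $x,y \in \Gamma(\tau)$, then $\tau \Vdash_\Gamma x < y$. This is immediate from the definition of $\Vdash_\Gamma$: every finite extension $\delta \supseteq \tau$ is also an extension of $\sigma$, so none of them can satisfy $\Gamma(\delta) \models \neg(x<y)$, and $x,y \in \Gamma(\tau)$ by hypothesis.

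With these pieces the argument runs as follows. By the symmetry between $\alpha$ and $\beta$ it suffices to prove one implication, say that $\alpha \Vdash_\Gamma x < y$ entails $\beta \Vdash_\Gamma x < y$. Assuming $\alpha \Vdash_\Gamma x < y$, upward persistence applied to $\alpha \subseteq \gamma$ gives $\gamma \Vdash_\Gamma x < y$. Now I descend to $\beta$: since $x,y \in \Gamma(\beta)$ and $x \neq y$, Proposition~\ref{prop:force:either} yields $\beta \Vdash_\Gamma x < y$ or $\beta \Vdash_\Gamma y < x$. If the second alternative held, then upward persistence applied to $\beta \subseteq \gamma$ would give $\gamma \Vdash_\Gamma y < x$, contradicting $\gamma \Vdash_\Gamma x < y$ by Proposition~\ref{prop:force:excluded-middle}. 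Hence $\beta \Vdash_\Gamma x < y$, as required, and exchanging the names of $\alpha$ and $\beta$ gives the reverse implication.

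I expect the only genuine subtlety to be the downward step, from $\gamma$ back to $\beta$: forcing of an order relation does not transfer to substructures by a simple restriction, so one cannot descend directly. The device that makes it work is that forcing of a strict order statement is \emph{decided}, via Propositions~\ref{prop:force:either} and~\ref{prop:force:excluded-middle}; consequently, ruling out $\beta \Vdash_\Gamma y < x$ is enough to conclude $\beta \Vdash_\Gamma x < y$, and that ruling-out is exactly what upward persistence combined with the excluded-middle behaviour at $\gamma$ provides. Everything else—the construction of $\gamma$ and the persistence lemma—is routine and relies only on monotonicity of $\Gamma$ and the definition of $\Vdash_\Gamma$.
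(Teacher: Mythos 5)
Your proof is correct and is essentially the paper's own argument: both merge the disjoint orders $\alpha$ and $\beta$ into a common finite extension $\gamma$ and derive a contradiction between $\gamma \Vdash_\Gamma x < y$ and $\gamma \Vdash_\Gamma y < x$ via Propositions~\ref{prop:force:either} and~\ref{prop:force:excluded-middle}. The only difference is presentational: the paper leaves the upward persistence of forcing implicit in its ``it follows that,'' whereas you isolate it as an explicit lemma, which is a reasonable clarification rather than a new route.
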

\begin{proof}
  Let $\alpha \Vdash_\Gamma x < y$ and, by Proposition \ref{prop:force:either}, assume that $\beta \Vdash_\Gamma y < x$.
  Let $\gamma$ be a finite linear order such that $\alpha,\beta \subseteq \gamma$.
  By monotonicity, since $\alpha \Vdash_\Gamma x < y$, then $\gamma \Vdash_\Gamma x < y$, and
  since $\beta \Vdash_\Gamma y < x$, then $\gamma \Vdash_\Gamma y < x$.
  We reach a contradiction by Proposition \ref{prop:force:excluded-middle}.
\end{proof}

\begin{proposition}\label{prop:finite:intersection}
  Suppose that $\Gamma : \{\mathcal{A},\mathcal{B}\} \leq_c \{\mathcal{C},\mathcal{D}\}$, where
  $\mathcal{A}$, $\mathcal{C}$ have no infinite descending chains, and $\mathcal{B}$, $\mathcal{D}$
  have no infinite ascending chains.
  There are at most finitely many elements $x$ with the property that there exist $\alpha$ and $\beta$ with disjoint domains and
  $x \in \Gamma(\alpha) \cap \Gamma(\beta)$.
\end{proposition}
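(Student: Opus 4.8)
The plan is to argue by contradiction: assuming infinitely many elements are \emph{shared} (i.e.\ lie in $\Gamma(\alpha)\cap\Gamma(\beta)$ for some disjoint $\alpha,\beta$), I will manufacture a single copy of $\mathcal{A}$ whose $\Gamma$-image contains an infinite descending chain, or a single copy of $\mathcal{B}$ whose image contains an infinite ascending chain. A linear order has no infinite descending (resp.\ ascending) chain exactly when it is well-ordered (resp.\ reverse well-ordered), so the hypotheses say that $\mathcal{A},\mathcal{C}$ are well-orders and $\mathcal{B},\mathcal{D}$ are reverse well-orders; hence such a chain inside $\Gamma(\mathcal{A})\cong\mathcal{C}$ or inside $\Gamma(\mathcal{B})\cong\mathcal{D}$ is the desired contradiction. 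Observe that this is precisely where both chain hypotheses get used, in a symmetric way.

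First I would organize the shared elements by the forcing relation. Fix, for each shared element $x$, a pair of disjoint witnesses $\alpha_x,\beta_x$ with $x\in\Gamma(\alpha_x)\cap\Gamma(\beta_x)$. By Proposition~\ref{prop:force:excluded-middle} and Proposition~\ref{prop:alpha:beta}, whenever two shared elements $x,y$ appear together in $\Gamma(\gamma)$ for some finite $\gamma$, exactly one of $x<y$, $y<x$ is forced, and this verdict is absolute: disjoint $\gamma$'s never disagree. The first real task is \emph{comparability} --- placing any two shared elements into a common $\Gamma(\gamma)$. Here one uses the two disjoint witnesses of each element: from the four finite orders $\alpha_x,\beta_x,\alpha_y,\beta_y$ one must extract a mergeable (compatible) pair, which can then be combined into a single $\gamma$ with $x,y\in\Gamma(\gamma)$ by monotonicity. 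Granting comparability, the shared elements carry a consistent linear order $\prec$; since there are infinitely many of them, a Ramsey argument on pairs yields an infinite $\prec$-ascending or $\prec$-descending sequence $x_0,x_1,x_2,\dots$.

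Treat the descending case $x_0\succ x_1\succ x_2\succ\cdots$ (the ascending case is symmetric, with $\mathcal{B},\mathcal{D}$ replacing $\mathcal{A},\mathcal{C}$). I would build an increasing chain of finite linear orders $\gamma_0\subseteq\gamma_1\subseteq\cdots$ so that $x_i\in\Gamma(\gamma_i)$ and $\gamma_i$ forces $x_i<x_{i-1}<\cdots<x_0$, and whose union is arranged to be a copy $\hat{\mathcal{A}}$ of $\mathcal{A}$. At stage $i+1$ I append to $\gamma_i$ a witness for $x_{i+1}$ whose domain is disjoint from $\dom(\gamma_i)$; disjoint domains guarantee the merge is a genuine finite linear order, and by the absoluteness above the forced order among $x_0,\dots,x_{i+1}$ remains the descending one. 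Because $\mathcal{A}$ is an infinite well-order, the countably many finite blocks can be distributed (with filler inserted) so that $\bigcup_i\gamma_i\cong\mathcal{A}$. Then $x_0,x_1,\dots\in\Gamma(\hat{\mathcal{A}})$ and $\Gamma(\hat{\mathcal{A}})\models x_0>x_1>x_2>\cdots$.

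Finally, $\Gamma(\hat{\mathcal{A}})\cong\mathcal{C}$ by the definition of a computable embedding, so $\mathcal{C}$ contains an infinite descending chain, contradicting that $\mathcal{C}$ is well-ordered. I expect the main obstacle to be the merging, and it is exactly here that the disjointness in the statement is indispensable: both the comparability step and the single-copy construction require turning separately-obtained witnesses into sub-orders of one structure, and only the availability of disjoint witnesses makes these merges consistent. The most delicate point to nail down is that, at each stage, a witness for the next element can be chosen disjoint from the finite part already constructed --- to be handled by exploiting that each shared element carries \emph{two} disjoint witnesses, together with the freedom to pass to an infinite subsequence.
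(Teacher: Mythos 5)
Your overall route is the same as the paper's: organize the shared elements by the forcing relation, extract an infinite monotone sequence, and realize witnesses inside a single copy of $\mathcal{A}$ or $\mathcal{B}$ to contradict a chain condition. But your comparability step has a genuine gap. You claim that from the four witnesses $\alpha_x,\beta_x,\alpha_y,\beta_y$ ``one must extract a mergeable (compatible) pair.'' Nothing you cite guarantees this, and it is false in general: all four cross pairs can overlap incompatibly. For instance, take $\alpha_x$ to be $0<1<2<3$, $\beta_x$ to be $4<5<6<7$, $\alpha_y$ the order $1<0<5<4$ on $\{0,1,4,5\}$, and $\beta_y$ the order $3<2<7<6$ on $\{2,3,6,7\}$; then each of the four cross pairs shares two points that are ordered oppositely, so no cross pair can be merged into a finite linear order, while $\alpha_x\cap\beta_x=\alpha_y\cap\beta_y=\emptyset$ as required. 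The fix is exactly the paper's opening move: using Proposition~\ref{prop:alpha:finite}, first thin out to infinitely many shared elements $x_i$ whose witness pairs $(\alpha_i,\beta_i)$ are \emph{mutually} disjoint; on that subfamily comparability is trivial (concatenate disjoint witnesses), and your relation $\prec$, the Ramsey step, and the block-by-block construction all make sense. Defining $\prec$ globally first, via merges that may not exist, and only invoking disjointness later (as your ordering of steps does) leaves the foundation of the argument unsupported.

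There is a second, subtler issue that your ``symmetric'' ascending case glosses over: you need the forced verdict on a pair to be independent not only of \emph{which} disjoint witnesses are used (that is Proposition~\ref{prop:alpha:beta}) but also of the \emph{arrangement} in which they are concatenated. In the ascending case you must place the witness of $x_{i+1}$ \emph{below} that of $x_i$ to build a copy of $\mathcal{B}$, so you need $\delta_{i+1}+\delta_i$ to force the same verdict as $\delta_i+\delta_{i+1}$; Proposition~\ref{prop:alpha:beta} does not apply to these two orders, since they have the same domain. This is precisely the point where sharedness (two disjoint witnesses per element) is indispensable: $\alpha_i+\alpha_j$ and $\alpha_j+\alpha_i$ each agree with the disjoint order $\beta_i+\beta_j$, hence with each other. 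The paper handles this by establishing the verdicts inside $\Gamma(\hat{\mathcal{A}})$ for $\hat{\mathcal{A}}\supseteq\sum_{i\in\omega}\alpha_i$ (where well-foundedness of $\mathcal{C}$ forces an ascending subsequence, so no Ramsey is needed), and then transferring each verdict via Proposition~\ref{prop:alpha:beta} to $\beta_{i+1}+\beta_i$, so that $\hat{\mathcal{B}}\supseteq\sum_{i\in\omega^\star}\beta_i$ produces an ascending chain in $\Gamma(\hat{\mathcal{B}})\cong\mathcal{D}$. Once you prepend the mutual-disjointness thinning and make this transfer explicit, your two-case argument is correct and becomes essentially the paper's proof, with Ramsey's theorem substituting for the use of $\mathcal{C}$'s chain condition.
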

\begin{proof}
  Towards a contradiction, assume the opposite.
  By Proposition~\ref{prop:alpha:finite}, there is an infinite sequence of mutually disjoint finite linear orders $\alpha_i$ and $\beta_i$, $i < \omega$,
  and distinct elements $x_i \in \Gamma(\alpha_i) \cap \Gamma(\beta_i)$.
  Consider a copy $\mathcal{\hat{A}}$ of $\mathcal{A}$ extending $\sum_{i\in\omega} \alpha_i$.
  Clearly every $x_i \in \Gamma(\mathcal{\hat{A}})$ and hence
  \[\Gamma(\mathcal{\hat{A}}) \models \bigwedge_{i<\omega} x_{\pi(i)} < x_{\pi(i+1)},\]
  for some permutation $\pi$ of $\omega$.
  For simplicity, suppose that $\pi$ is the identity function.
  Then we have that for any index $i$, $\alpha_i + \alpha_{i+1} \Vdash_\Gamma x_i < x_{i+1}$.
  Now, since $x_i \in \Gamma(\beta_i)$ and $x_{i+1} \in \Gamma(\beta_{i+1})$, by Proposition~\ref{prop:alpha:beta}, we have
  $\beta_{i+1} + \beta_i \Vdash_\Gamma x_i < x_{i+1}$.
  In this way we can build a copy $\mathcal{\hat{B}}$ of $\mathcal{B}$ extending $\sum_{i\in\omega^\star} \beta_i$,
  and obtain
  \[\Gamma(\mathcal{\hat{B}}) \models \bigwedge_{i<\omega} x_i < x_{i+1},\]
  which is a contradiction, because $\Gamma(\hat{\mathcal{B}})$ is a copy of $\mathcal{D}$, which has no infinite ascending chains.
\end{proof}

Let us call $(x,\alpha)$ a $\Gamma$-pair if $x \in \Gamma(\alpha)$.
In view of Proposition~\ref{prop:finite:intersection}, for any sequence of $(\alpha_i)_{i<\omega}$ such that $\Gamma(\alpha_i) \neq \emptyset$,
we may assume that there is an infinite subsequence of $\Gamma$-pairs $(x_i,\alpha_{k_i})_{i<\omega}$, where all $x_i$ are distinct elements.

\begin{proposition}
  \label{prop:ascending:chain}
  For any two sequences of $\Gamma$-pairs $(x_i,\alpha_i)_{i\in\omega}$ and $(y_i,\beta_i)_{i\in\omega}$, the following are equivalent:
  \begin{itemize}
  \item[(i)]
    $\Gamma(\sum_{i\in\omega} \alpha_i + \sum_{i\in\omega}\beta_i) \models \bigwedge_{i<\omega} x_i < y_i < x_{i+1}$;
  \item[(ii)]
    $\Gamma(\sum_{i\in\omega^\star}\alpha_i + \sum_{i\in\omega^\star}\beta_i) \models \bigwedge_{i<\omega} x_i < y_i < x_{i+1}$.
  \end{itemize}
\end{proposition}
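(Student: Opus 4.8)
The plan is to show that each of (i) and (ii) is equivalent to one and the same \emph{finitary} forcing condition, and then to observe that this condition is literally the same finite datum in the two cases. Write $\mathcal{L} = \sum_{i\in\omega}\alpha_i + \sum_{i\in\omega}\beta_i$ and $\mathcal{L}^{\star} = \sum_{i\in\omega^{\star}}\alpha_i + \sum_{i\in\omega^{\star}}\beta_i$. The starting point is a structural remark: in both $\mathcal{L}$ and $\mathcal{L}^{\star}$ every block $\alpha_i$ lies entirely below every block $\beta_j$, and the two orders differ \emph{only} in the internal arrangement of the $\alpha$-blocks among themselves and of the $\beta$-blocks among themselves. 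Now every conjunct of $\bigwedge_i x_i < y_i < x_{i+1}$ compares an element $x_i \in \Gamma(\alpha_i)$ of an $\alpha$-block with an element $y_i \in \Gamma(\beta_i)$ of a $\beta$-block (the conjunct $x_i < y_i$), or $y_i \in \Gamma(\beta_i)$ with $x_{i+1} \in \Gamma(\alpha_{i+1})$ (the conjunct $y_i < x_{i+1}$). In either order the induced suborder on $\alpha_i \cup \beta_i$ is $\alpha_i + \beta_i$ and the one on $\alpha_{i+1} \cup \beta_i$ is $\alpha_{i+1} + \beta_i$; these finite orders, together with the elements they carry, do not depend on whether we are in the $\omega$- or the $\omega^{\star}$-case.

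With this in mind I would prove that each of (i) and (ii) is equivalent to
\[
(\ast)\qquad (\forall i)\,\bigl[\ \alpha_i+\beta_i \Vdash_\Gamma x_i < y_i \ \text{ and }\ \alpha_{i+1}+\beta_i \Vdash_\Gamma y_i < x_{i+1}\ \bigr].
\]
For the direction from a model statement to $(\ast)$, fix a conjunct, say $x_i < y_i$. By monotonicity $x_i, y_i \in \Gamma(\alpha_i+\beta_i)$, so by Proposition~\ref{prop:force:excluded-middle} exactly one of $\alpha_i+\beta_i \Vdash_\Gamma x_i < y_i$ and $\alpha_i+\beta_i \Vdash_\Gamma y_i < x_i$ holds. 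If the second held, then by definition of forcing no finite order extending $\alpha_i+\beta_i$ would have $x_i < y_i$ in its $\Gamma$-image; but $\Gamma(\mathcal{L}) \models x_i < y_i$ is witnessed, by continuity of $\Gamma$, by a finite suborder $F \subseteq \mathcal{L}$, which we may enlarge to contain $\alpha_i+\beta_i$ (still a suborder of $\mathcal{L}$, since $\alpha_i$ lies below $\beta_i$ there), yielding such an extension and a contradiction. Hence $\alpha_i+\beta_i \Vdash_\Gamma x_i < y_i$, and the conjuncts $y_i < x_{i+1}$ are handled identically. The same argument applies verbatim to $\mathcal{L}^{\star}$.

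For the converse, from $(\ast)$ back to the model statements, the key point is that $\Gamma$ applied to $\mathcal{L}$ (and likewise to $\mathcal{L}^{\star}$) yields a \emph{total} linear order on its domain: completing the order to a copy of $\mathcal{A}$, respectively $\mathcal{B}$, the output is a copy of $\mathcal{C}$, respectively $\mathcal{D}$, which decides every pair of its elements. Each pair $x_i, y_i$ already lies in $\Gamma(\alpha_i+\beta_i) \subseteq \Gamma(\mathcal{L})$, so $\Gamma(\mathcal{L})$ decides it; by $(\ast)$ it cannot decide $y_i < x_i$, so it enumerates $x_i < y_i$, and the conjuncts $y_i < x_{i+1}$ are treated the same way. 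Since the finite conditions in $(\ast)$ are \emph{identical} for $\mathcal{L}$ and $\mathcal{L}^{\star}$, (i) and (ii) hold or fail together, which is the assertion.

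The delicate step, which I expect to be the main obstacle, is exactly this converse locality claim. The naive strategy would be to amalgamate a finite suborder of $\mathcal{L}$ witnessing $x_i < y_i$ with a finite suborder of $\mathcal{L}^{\star}$ witnessing $y_i < x_i$, in the style of the proof of Proposition~\ref{prop:finite:intersection}; but this fails outright, because two such witnesses typically share blocks $\alpha_j, \beta_j$ that $\mathcal{L}$ and $\mathcal{L}^{\star}$ order in opposite ways, so they cannot be merged into a single linear order. The resolution is to avoid amalgamation entirely and instead exploit that the relative order of $x_i$ and $y_i$ is forced by the \emph{single} block-pair $\alpha_i+\beta_i$ (Propositions~\ref{prop:force:either} and~\ref{prop:alpha:beta}), a datum that is insensitive to the surrounding arrangement and that embeds identically into $\mathcal{L}$ and $\mathcal{L}^{\star}$. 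This locality is precisely what lets one and the same finite forcing condition govern the comparison in both sums, and hence forces (i) and (ii) to agree.
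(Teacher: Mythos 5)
Your proof is correct and follows essentially the same route as the paper's: both arguments derive the two-block forcing conditions $\alpha_i+\beta_i \Vdash_\Gamma x_i < y_i$ and $\alpha_{i+1}+\beta_i \Vdash_\Gamma y_i < x_{i+1}$ from one of the model statements (via monotonicity and Propositions~\ref{prop:force:either} and~\ref{prop:force:excluded-middle}) and then transfer them to the other sum — the paper by assembling them into the single condition $\alpha_{i+1}+\alpha_i+\beta_{i+1}+\beta_i$, which is a suborder of $\sum_{i\in\omega^\star}\alpha_i + \sum_{i\in\omega^\star}\beta_i$, you by noting that the two-block conditions already embed identically in both sums. That assembly step is cosmetic, so your symmetric factorization through $(\ast)$ is the same argument, just organized so that both directions are read off at once.
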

\begin{proof}
  The two directions are symmetrical. Without loss of generality, suppose that
  \begin{equation}
    \label{eq:1}
    \Gamma(\sum_{i\in\omega} \alpha_i + \sum_{i\in\omega}\beta_i) \models \bigwedge_{i<\omega} x_i < y_i < x_{i+1}.
  \end{equation}
  It is enough to show that for an arbitrary $i$,
  \[\alpha_{i+1} + \alpha_{i} + \beta_{i+1} + \beta_{i} \Vdash_\Gamma x_i < y_i < x_{i+1} < y_{i+1}.\]
  Since $x_i \in \Gamma(\alpha_{i})$ and $y_i \in \Gamma(\beta_{i})$, by the monotonicity of $\Gamma$ and (\ref{eq:1}), we have
  $\alpha_{i} + \beta_{i} \Vdash_\Gamma x_i < y_i$.
  Similarly, we have
  \[\alpha_{i+1} + \beta_{i} \Vdash_\Gamma y_i < x_{i+1}\mbox{ and } \alpha_{i+1} + \beta_{i+1} \Vdash_\Gamma x_{i+1} < y_{i+1}.\]

  Since all these four finite linear orders are disjoint, we can place $\alpha_{i+1}$ before $\alpha_{i}$
  and $\beta_{i+1}$ before $\beta_{i}$ to obtain
  \[\alpha_{i+1} + \alpha_{i} + \beta_{i+1} + \beta_{i} \Vdash_\Gamma x_i < y_i < x_{i+1} < y_{i+1}.\]
\end{proof}

Analogous to the relation $\subseteq^\star$ between sets, for two infinite sequences of elements $\overline{x}$ and $\overline{y}$,
let us denote by $\overline{x} <^\star \overline{y}$ the following sentence
\[\bigvee_{q\in\omega}\bigwedge_{i,j>q}x_i < y_j.\]
Similarly, if $\A$, $\B$ are linear orders, we will slightly abuse the notation and write $\A +^\star \B$
for a suitable structure obtained from placing the elements of $\B$ just after the elements of $\A$,
with the exception of finitely many elements of $\A$ and $\B$, which may be mixed together.

\begin{proposition}\label{prop:no-chains}
  Suppose $\Gamma : \{\omega \cdot 2, \omega^\star \cdot 2\} \leq_c \{\mathcal{C},\mathcal{D}\}$, where
  $\mathcal{C}$ is a linear order without infinite descending chains and $\mathcal{D}$ is an infinite order without infinite ascending chains.
  For any two sequences of $\Gamma$-pairs $(x_i,\alpha_i)_{i\in\omega}$ and $(y_i,\beta_i)_{i\in\omega}$,
  \begin{equation}
    \label{eq:5}
    \Gamma(\sum_{i\in\omega} \alpha_i + \sum_{i\in\omega} \beta_i) \models \overline{x} <^\star \overline{y}\ \lor\ \overline{y} <^\star \overline{x},
  \end{equation}
  \begin{equation}
    \label{eq:6}
    \Gamma(\sum_{i\in\omega^\star} \alpha_i + \sum_{i\in\omega^\star} \beta_i) \models \overline{x} <^\star \overline{y}\ \lor\ \overline{y} <^\star \overline{x},
  \end{equation}  
\end{proposition}
\begin{proof}
  For (\ref{eq:5}), assume that there exist two sequences of $\Gamma$-pairs $(x_i,\alpha_{i})_{i\in\omega}$ and $(y_i,\beta_{i})_{i\in\omega}$ that witness the opposite.
  We will show that we can build two infinite subsequences $(x_{s_i},\alpha_{s_i})_{i\in\omega}$ and $(y_{t_i},\beta_{t_i})_{i\in\omega}$, such that
  \[\Gamma(\sum_{i\in\omega} \alpha_{s_i} + \sum_{i\in\omega} \beta_{t_i}) \models \bigwedge_{i} x_{s_i} < y_{t_i} < x_{s_{i+1}}.\]
  Then we will apply Proposition~\ref{prop:ascending:chain} to reach a contradiction.
  Suppose we have built the subsequences up to index $\ell$, i.e. we have the finite subsequences of $\Gamma$-pairs
  $(x_{s_i},\alpha_{s_i})_{i\leq\ell}$ and $(y_{t_i},\beta_{t_i})_{i<\ell}$, such that
  \[\alpha_{s_{0}} + \alpha_{s_1} + \cdots + \alpha_{s_{\ell}} + \beta_{t_{0}} + \beta_{t_1} + \cdots + \beta_{t_{\ell-1}} \Vdash_\Gamma \bigwedge_{i<\ell} x_{s_i} < y_{t_i} < x_{s_{i+1}}.\]
  Given indices $s_\ell$ and $t_{\ell-1}$, we start with some indices $i$ and $j$ such that $s_\ell < i$, $t_{\ell-1} < j$, and
  \[\Gamma(\sum_{k\in\omega} \alpha_k + \sum_{k\in\omega} \beta_k) \models x_{s_\ell} < x_{i} < y_{j}.\]
  Now we find some indices $j'$ and $i'$ such that $i < i'$, $j < j'$, and
  \[\Gamma(\sum_{i\in\omega} \alpha_i + \sum_{i\in\omega} \beta_i) \models y_{j} < y_{j'} < x_{i'}.\]
  Since $\Gamma(\sum_{i\in\omega} \alpha_i + \sum_{i\in\omega} \beta_i)$ does not contain an infinite descending chain, and by our assumption, we know that we can find such indices.
  We let $s_{\ell+1} = i'$ and $t_{\ell} = j'$. By the properties of $\Vdash_\Gamma$, it is clear that
  \[\alpha_{s_{0}} + \alpha_{s_1} + \cdots + \alpha_{s_{\ell+1}} + \beta_{t_{0}} + \beta_{t_1} + \cdots + \beta_{t_\ell} \Vdash_\Gamma \bigwedge_{i<\ell+1} x_{s_i} < y_{t_i} < x_{s_{i+1}}.\]
  Now by Proposition~\ref{prop:ascending:chain}, we have the following:
  \[\Gamma(\sum_{i\in\omega^\star} \alpha_i + \sum_{i\in\omega^\star} \beta_i ) \models \bigwedge_{i\in\omega} x_{s_i} < y_{t_i} < x_{s_{i+1}},\]
  which is a contradiction with the fact that $\Gamma(\sum_{i\in\omega^\star} \alpha_i + \sum_{i\in\omega^\star} \beta_i)$ does not contain an infinite ascending chain.

  The proof of (\ref{eq:6}) is symmetrical to that of (\ref{eq:5}).
\end{proof}

\begin{theorem}\label{th:include-omega-k}
  Fix some $k \geq 2$ and suppose that $\Gamma : \{\omega \cdot k, \omega^\star \cdot k\} \leq_c \{\mathcal{D}_0,\mathcal{D}_1\}$, where
  $\mathcal{D}_0$ is a linear order without infinite descending chains and $\mathcal{D}_1$ is an infinite order without infinite ascending chains.
  Then $\mathcal{D}_0$ contains $\omega \cdot k$ as a substructure, and $\mathcal{D}_1$ contains $\omega^\star \cdot k$ as a substructure.
\end{theorem}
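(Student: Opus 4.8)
The plan is to prove that $\mathcal{D}_0\cong\Gamma(\omega\cdot k)$ contains a copy of $\omega\cdot k$; the inclusion of $\omega^\star\cdot k$ into $\mathcal{D}_1$ will then follow by the symmetry that reverses every order, interchanging $\omega\cdot k$ with $\omega^\star\cdot k$, $\mathcal{D}_0$ with $\mathcal{D}_1$, and the two chain conditions. Since $\mathcal{D}_0$ has no infinite descending chain it is a well-order, so it is enough to exhibit $k$ infinite subsets $G_0,\dots,G_{k-1}$ of $\mathcal{D}_0$ with every element of $G_j$ below every element of $G_{j+1}$: an infinite subset of a well-order has order type at least $\omega$, so I can thin each $G_j$ to an ascending $\omega$-chain, and the resulting union is a copy of $\omega+\dots+\omega=\omega\cdot k$ sitting inside $\mathcal{D}_0$.

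First I would fix, using Proposition~\ref{prop:alpha:finite} and Proposition~\ref{prop:finite:intersection}, infinitely many $\Gamma$-pairs whose finite orders are pairwise disjoint and whose witnesses are pairwise distinct, and distribute them into $k$ infinite sequences $(x^j_i,\alpha^j_i)_{i\in\omega}$, $j<k$. Placing the $j$-th sequence into the $j$-th block produces a copy $\hat{\mathcal{A}}=\sum_{j<k}\sum_{i\in\omega}\alpha^j_i\cong\omega\cdot k$, so that $\Gamma(\hat{\mathcal{A}})\cong\mathcal{D}_0$ and, by monotonicity, every $x^j_i$ is an element of $\mathcal{D}_0$.

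The heart of the argument is a $k$-block analogue of Proposition~\ref{prop:no-chains}: for any two of the sequences, coming from blocks $a$ and $b$, I claim that in $\Gamma(\hat{\mathcal{A}})$ either $\overline{x^a}<^\star\overline{x^b}$ or $\overline{x^b}<^\star\overline{x^a}$. Supposing neither, the interleaving construction of Proposition~\ref{prop:no-chains} --- which only needs that $\mathcal{D}_0$ has no infinite descending chain --- yields subsequences realizing $x^a_{s_0}<x^b_{t_0}<x^a_{s_1}<x^b_{t_1}<\cdots$ in $\Gamma(\hat{\mathcal{A}})$. Reversing each block internally while keeping the block order gives $\hat{\mathcal{B}}=\sum_{j<k}\sum_{i\in\omega^\star}\alpha^j_i\cong\omega^\star\cdot k$, and the local forcing transfer underlying Proposition~\ref{prop:ascending:chain} (extract $\alpha^a_{s_i}+\alpha^b_{t_i}\Vdash_\Gamma x^a_{s_i}<x^b_{t_i}$ and the analogous relations by monotonicity, then re-place these finite orders inside the reversed blocks) realizes the same interleaving, hence an infinite ascending chain, in $\Gamma(\hat{\mathcal{B}})\cong\mathcal{D}_1$ --- contradicting that $\mathcal{D}_1$ has no infinite ascending chain. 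The new feature relative to the two-block case is precisely that the remaining $k-2$ blocks are inert in these finite forcings and are merely carried along, which is what lets $\mathcal{D}_1$ (rather than $\Gamma(\omega^\star\cdot 2)$) supply the contradiction for every $k$.

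Finally, since $<^\star$ is antisymmetric and transitive on infinite sequences of distinct elements, the pairwise separations linearly order the $k$ sequences, so after relabelling $\overline{x^0}<^\star\overline{x^1}<^\star\cdots<^\star\overline{x^{k-1}}$. Thinning each sequence to an ascending $\omega$-subchain preserves these relations, and deleting the finitely many initial witnesses responsible for each $<^\star$ upgrades them to genuine separations, giving sets $G_0,\dots,G_{k-1}$ as required; their union is the desired copy of $\omega\cdot k$ in $\mathcal{D}_0$. I expect the main obstacle to be the $k$-block separation claim, and in particular checking that the reversal to $\hat{\mathcal{B}}\cong\omega^\star\cdot k$ faithfully transports the interleaving chain; the closing extraction of the blocks $G_j$ is routine.
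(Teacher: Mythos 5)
Your proposal is correct and takes essentially the same route as the paper's own proof: the paper likewise fixes $k$ blocks of pairwise disjoint $\Gamma$-pairs, applies the interleaving-plus-reversal argument of Proposition~\ref{prop:no-chains} (via Proposition~\ref{prop:ascending:chain}) pairwise to force $<^\star$-separation of the $k$ witness chains, lifts these separations by monotonicity to $\Gamma(\omega\cdot k)\cong\mathcal{D}_0$, and assembles the separated infinite chains into a copy of $\omega\cdot k$, with the $\mathcal{D}_1$ half following by order-reversal symmetry. Your only real deviation is that you re-run the interleaving construction inside the full $k$-block structure (with the remaining $k-2$ blocks inert) instead of citing Proposition~\ref{prop:no-chains} verbatim, which is a sound and in fact slightly more careful handling, since that proposition is literally stated only for embeddings of $\{\omega\cdot 2,\omega^\star\cdot 2\}$ and its use for $k>2$ needs exactly the monotonicity observation you make explicit.
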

\begin{proof}
  The case of $k = 2$ is a direct corollary of Proposition~\ref{prop:no-chains}.
  By (\ref{eq:5}), there are some infinite sequences of elements $\overline{x}$ and $\overline{y}$ such that 
  $\mathcal{D}_0 \models \overline{x} <^\star \overline{y}$. Since $\mathcal{D}_0$ does not contain an infinite descending chain, it follows that $\mathcal{D}_0$
  contains $\omega \cdot 2$ as a substructure. Similarly, property (\ref{eq:6}) tells us that $\mathcal{D}_1$ contains $\omega^\star \cdot 2$ as a substructure.
  
  Let us consider the case of $k = 3$, the general case being a straightforward generalization.
  Let $\mathcal{A}$, $\mathcal{B}$, and $\mathcal{C}$ be copies of $\omega$, with disjoint domains, partitioned in the following way:
  \[\mathcal{A} = \sum_{i\in\omega} \alpha_i,\ \mathcal{B} = \sum_{i\in\omega} \beta_i,\text{ and } \mathcal{C} = \sum_{i\in\omega} \gamma_i.\]

  We use Proposition~\ref{prop:no-chains} at most three times. Start with two arbitrary sequences of $\Gamma$-pairs $(x_i,\alpha_{k_i})_{i<\omega}$ and $(y_i,\beta_{m_i})_{i<\omega}$ and,
  without loss of generality, suppose that there is a number $\ell_1$ such that
  \[\Gamma(\sum_{i\in\omega} \alpha_{k_i} + \sum_{i\in\omega} \beta_{m_i}) \models \bigwedge_{i,j>\ell_1} x_i < y_j.\]
  Now we take a sequence of $\Gamma$-pairs $(z_i,\gamma_{n_i})_{i<\omega}$ and we may suppose that there is a number $\ell_2$ such that
  \[\Gamma(\sum_{i\in\omega} \alpha_{k_i} + \sum_{i\in\omega} \gamma_{n_i}) \models \bigwedge_{i,j>\ell_2} x_i < z_j.\]
  We must apply Proposition~\ref{prop:no-chains} one more time the two sequences of $\Gamma$-pairs $(y_i,\beta_{k_i})_{i<\omega}$ and $(z_i,\gamma_{n_i})_{i<\omega}$, We may suppose that there is a number $\ell_3$ such that
  \[\Gamma(\sum_{i\in\omega} \beta_{m_i} + \sum_{i\in\omega} \gamma_{n_i}) \models \bigwedge_{i,j>\ell_3} y_i < z_j.\]
  By monotonicity of $\Gamma$, it follows that for $\ell_0 = \max\{\ell_1,\ell_2,\ell_3\}$, we have
  \[\Gamma(\sum_{i\in\omega}\alpha_{k_i} + \sum_{i\in\omega} \beta_{m_i} + \sum_{i\in\omega} \gamma_{n_i}) \models \bigwedge_{i,j,k > \ell_0} x_i < y_j < z_k. \]
  Again by monotonicity of $\Gamma$, we have
  $\Gamma(\mathcal{A} + \mathcal{B} + \mathcal{C}) \models \bigwedge_{i,j,k > \ell_0} x_i < y_j < z_k$.
  We conclude that $\Gamma(\mathcal{A} + \mathcal{B} + \mathcal{C})$ contains a copy of $\omega \cdot 3$ as a substructure.

  For $k > 3$, the proof is similar to that of the case $k = 3$.
\end{proof}

\begin{corollary}
  For any $k < \omega$, $\{\omega \cdot 2^{k}, \omega^\star \cdot 2^{k}\} <_c \{\omega \cdot 2^{k+1}, \omega^\star \cdot 2^{k+1}\}$.
\end{corollary}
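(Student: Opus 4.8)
The plan is to split the strict inequality $<_c$ into its two halves: the embedding $\{\omega\cdot 2^{k},\omega^\star\cdot 2^{k}\}\le_c\{\omega\cdot 2^{k+1},\omega^\star\cdot 2^{k+1}\}$, and the failure of the reverse reduction. The second half will fall out immediately from the theorem proved just above, so the only genuine construction is for the embedding.

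For the embedding I would exploit that $2^{k+1}=2^{k}+2^{k}$, hence $\omega\cdot 2^{k+1}=\omega\cdot 2^{k}+\omega\cdot 2^{k}$ and likewise $\omega^\star\cdot 2^{k+1}=\omega^\star\cdot 2^{k}+\omega^\star\cdot 2^{k}$. Accordingly I would define an enumeration operator $\Gamma$ realizing $\mathcal{S}\mapsto \mathcal{S}+\mathcal{S}$, i.e.\ two disjoint copies of the input placed one entirely below the other. Concretely, for each element $x$ appearing in the input create two elements $x^{(0)},x^{(1)}$; whenever the input diagram enumerates $x<_{\mathcal{S}}y$, enumerate $x^{(0)}<y^{(0)}$ and $x^{(1)}<y^{(1)}$; and whenever both $x$ and $y$ have appeared, enumerate $x^{(0)}<y^{(1)}$, putting every element of the lower copy below every element of the upper copy. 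Each output relation is a positive consequence of the input atomic diagram, so $\Gamma$ is a monotone enumeration operator. On an input of type $\omega\cdot 2^{k}$ it produces $\omega\cdot 2^{k}+\omega\cdot 2^{k}\cong\omega\cdot 2^{k+1}$, and on an input of type $\omega^\star\cdot 2^{k}$ it produces a copy of $\omega^\star\cdot 2^{k+1}$; isomorphic inputs clearly yield isomorphic outputs and the two targets are non-isomorphic, so $\Gamma$ is a computable embedding.

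For strictness I would argue by contradiction, supposing a computable embedding $\Gamma\colon\{\omega\cdot 2^{k+1},\omega^\star\cdot 2^{k+1}\}\le_c\{\omega\cdot 2^{k},\omega^\star\cdot 2^{k}\}$. Setting $\mathcal{D}_0=\omega\cdot 2^{k}$ and $\mathcal{D}_1=\omega^\star\cdot 2^{k}$, the paper's convention gives $\Gamma(\omega\cdot 2^{k+1})\cong\mathcal{D}_0$ and $\Gamma(\omega^\star\cdot 2^{k+1})\cong\mathcal{D}_1$. Since $\omega\cdot 2^{k}$ is an ordinal it has no infinite descending chain, and $\omega^\star\cdot 2^{k}$ is an infinite order with no infinite ascending chain, so the hypotheses of the preceding theorem are met with $2^{k+1}$ playing the role of $k$. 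The theorem then forces $\mathcal{D}_0=\omega\cdot 2^{k}$ to include a copy of $\omega\cdot 2^{k+1}$.

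The single remaining point, which is the crux of the contradiction, is the elementary fact that any suborder of a well-ordering $\alpha$ has order type at most $\alpha$; since $\omega\cdot 2^{k}<\omega\cdot 2^{k+1}$, the ordinal $\omega\cdot 2^{k}$ cannot contain a copy of $\omega\cdot 2^{k+1}$. This contradiction yields $\{\omega\cdot 2^{k+1},\omega^\star\cdot 2^{k+1}\}\not\le_c\{\omega\cdot 2^{k},\omega^\star\cdot 2^{k}\}$, which combined with the embedding above gives the strict relation $<_c$. In effect, essentially all of the difficulty is already carried by the theorem, and the corollary reduces to checking its hypotheses together with this basic observation about order types of suborders of ordinals.
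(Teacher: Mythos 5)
Your proposal is correct and takes essentially the same route the paper intends: the corollary is stated as an immediate consequence of the preceding theorem, with strictness obtained exactly as you argue (a reverse reduction would force the ordinal $\omega\cdot 2^{k}$ to include a copy of $\omega\cdot 2^{k+1}$, which is impossible), and the positive direction witnessed by the evident doubling operator $\mathcal{S}\mapsto\mathcal{S}+\mathcal{S}$, which the paper leaves implicit. The only point you gloss over --- which structure maps to which under a hypothetical reverse reduction --- is equally glossed over by the paper via its naming convention, and is harmless since reversing the output order reduces the swapped case to the one covered by the theorem.
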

It follows that we have the following picture:
\[\{\omega,\omega^\star\} <_c \{\omega\cdot 2, \omega^\star \cdot 2\} <_c \cdots <_c \{\omega\cdot 2^k,\omega^\star \cdot 2^k\} <_c  \cdots <_c \{1+\eta, \eta + 1\}.\]

% We remark that Proposition~\ref{prop:no-chains} implies that
% $\mathcal{C}$ contains a linear order of type $\omega \cdot 2$ and $\mathcal{D}$ contains a linear order of type $\omega^\star \cdot 2$.
% From here we immediately conclude that $\{\omega \cdot 2, \omega^\star\cdot 2 \} \not\leq_c \{ \omega, \omega^\star \}$.
% Similarly, it is clear that $\{\omega \cdot 3, \omega^\star\cdot 3 \} \not\leq_c \{ \omega \cdot 2, \omega^\star \cdot 2\}$.
Now we are ready to prove the main result of this paper, which is the following theorem.

\begin{theorem}
  For any two non-zero natural numbers $k$ and $t$,
  \[k\ \mid\ t\ \iff\ \{\omega \cdot k, \omega^\star \cdot k\} \leq_c \{\omega\cdot t,\omega^\star \cdot t\}.\]
\end{theorem}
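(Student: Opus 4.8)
The plan is to prove the two directions separately; the forward implication is short and the reverse one carries the real content.

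For $k\mid t$, write $t=km$ and define the enumeration operator $\Gamma$ that, on input $\mathcal{S}$, outputs $m$ side-by-side copies of $\mathcal{S}$, i.e.\ $\mathcal{S}\cdot m$ (the domain is $\dom(\mathcal{S})\times\{0,\dots,m-1\}$, and from each enumerated fact $x<_{\mathcal{S}}y$ one enumerates the induced order both within and between the copies). This is visibly monotone and total, hence a computable embedding, and since linear-order multiplication is associative with $k\cdot m=t$ as finite orders, $\Gamma(\omega\cdot k)\cong(\omega\cdot k)\cdot m=\omega\cdot t$ and $\Gamma(\omega^\star\cdot k)\cong(\omega^\star\cdot k)\cdot m=\omega^\star\cdot t$. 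As these are the only two isomorphism types involved, condition~(2) of the definition is immediate.

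For the converse, assume $\Gamma\colon\{\omega\cdot k,\omega^\star\cdot k\}\leq_c\{\omega\cdot t,\omega^\star\cdot t\}$ and recall that $\omega\cdot t$ has no infinite descending chain and $\omega^\star\cdot t$ no infinite ascending chain, so Propositions~\ref{prop:alpha:finite},~\ref{prop:finite:intersection},~\ref{prop:ascending:chain} and (the $k$-ary generalization of) Proposition~\ref{prop:no-chains} all apply. Fix the input copy $\omega\cdot k=B_0+\dots+B_{k-1}$ with each $B_r\cong\omega$. First, the generalized no-chains argument shows that $\Gamma$-pair sequences drawn from distinct blocks are $<^\star$-comparable and, after refinement, are ordered at infinity exactly as $B_0<\dots<B_{k-1}$; together with the theorem that $\omega\cdot t$ includes a copy of $\omega\cdot k$, this tells us that each block $B_r$ contributes cofinally to the output. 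The goal is to upgrade this to a \emph{count}: modulo finitely many output elements (controlled by Proposition~\ref{prop:finite:intersection}), I would show that every output element has a support localized in a single input block, so that the output splits as a concatenation, in block order, of pieces $O_0<\dots<O_{k-1}$ with $O_r$ generated from $B_r$. Each $O_r$ is then an infinite suborder of $\omega\cdot t$ of the form $\omega\cdot m_r$ for some $m_r\geq 1$, whence $\omega\cdot t\cong\sum_{r<k}\omega\cdot m_r=\omega\cdot\sum_{r<k}m_r$ and $t=\sum_{r<k}m_r$.

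It then remains to see that the $m_r$ are all equal, and here I would exploit the homogeneity of the input: permuting the blocks of $\omega\cdot k$ produces another copy $\mathcal{A}'\cong\omega\cdot k$, so $\Gamma(\mathcal{A}')\cong\Gamma(\mathcal{A})\cong\omega\cdot t$; since the contribution of a block depends, by localization, only on the block and not on its position among the others, the block formerly contributing $m_r$ still contributes $m_r$ output blocks in its new slot, forcing $m_r=m_{r'}$ for all $r,r'$. Thus $t=k\cdot m_0$ and $k\mid t$. I expect the main obstacle to be precisely this localization-and-uniformity step: output elements whose finite supports straddle two input blocks must be shown to be finite in number and harmless, so that ``the output blocks produced by $B_r$'' is well defined and invariant under repositioning $B_r$. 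Proposition~\ref{prop:finite:intersection} is the key tool for the localization, while the symmetric link of Proposition~\ref{prop:ascending:chain} between the $\omega$- and $\omega^\star$-arrangements is what lets the same bookkeeping run on the $\omega^\star\cdot k$ side, ruling out a spurious mismatch between the two halves of the pair.
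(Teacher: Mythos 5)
Your forward direction (multiplying the input by $q = t/k$) is correct; the paper treats it as immediate. The reverse direction, however, hinges on a structural claim that is not merely unproved but false. You assert that, modulo finitely many elements, $\Gamma(B_0 + \cdots + B_{k-1})$ splits as a concatenation $O_0 < O_1 < \cdots < O_{k-1}$ where $O_r$ consists of the output elements generated from $B_r$. The very operator witnessing the positive direction refutes this: for $t = kq$ with $q \geq 2$, the operator $\Gamma(\mathcal{S}) = \mathcal{S}\cdot q$ sends $B_0 + \cdots + B_{k-1}$ to $q$ consecutive copies of the whole input, so the elements generated from $B_0$ occupy the output blocks numbered $0, k, 2k, \dots$ --- interleaved with, not preceding, those generated from $B_1,\dots,B_{k-1}$. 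This interleaving is not an accident: Proposition~\ref{prop:no-merge-general} of the paper proves that for any embedding the block outputs \emph{must} merge pairwise ($\mathcal{B}^1_1 +^\star \mathcal{B}^1_2 + \cdots$), essentially because a mixed copy of $\omega$ assembled from elements of two input blocks must itself map to $\omega\cdot m$. Moreover, Proposition~\ref{prop:finite:intersection} cannot deliver your localization step: it bounds the number of elements having \emph{two disjoint supports}, while the elements that obstruct you are those whose single support straddles several blocks, such as the $z_\ell \in \Gamma(\sum^k_{i=1}\alpha^\ell_i)$ in the paper's case analysis. Such elements may well be infinite in number; the actual task --- and the technical core of the paper, namely the case analysis (\ref{eq:right-general})--(\ref{eq:4}) on top of the $\text{Ord}(M)$ stabilization and Proposition~\ref{prop:no-omega-tail} --- is to show that they can never assemble into an \emph{extra} copy of $\omega$. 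Your proposal leaves this untouched.

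Even if localization were granted, your final step fails: permuting the blocks of a single fixed copy yields no constraint whatsoever, because every permutation of a concatenation of orders $\omega\cdot m_r$ has the same order type $\omega\cdot(\sum_r m_r)$. For instance, $k=2$, $m_0 = 1$, $m_1 = 2$, $t = 3$ is perfectly consistent with both $B_0 + B_1$ and $B_1 + B_0$ mapping to $\omega\cdot 3$, so nothing forces $m_0 = m_1$. What would force equality is a \emph{replacement} argument --- trade one block for a fresh copy of $\omega$ disjoint from all the others, so that $m_{C_0} + \sum_{i\geq 2} m_{C_i} = t = m_{C_1} + \sum_{i\geq 2} m_{C_i}$ --- but this presupposes that ``the contribution of a block'' is a well-defined quantity, independent of the ambient copy and with no finite tail; that uniformity is precisely what the paper manufactures with Proposition~\ref{prop:no-omega-tail}, the well-foundedness argument for $\text{Ord}(M)$, and the stable set $M_p$ on which every copy of $\omega$ maps to one fixed $\omega\cdot m$. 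In short, the two load-bearing steps of your plan are respectively false (localization/concatenation) and vacuous (permutation invariance), and the difficulties they were meant to bypass are exactly the ones the paper's proof is built to resolve.
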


First we will study the simpler case when $k = 2$ and $t = 3$ and then we will prove the general case.

\section{The case $\{\omega \cdot 2, \omega^\star \cdot 2\} \not\leq_c \{\omega\cdot 3,\omega^\star \cdot 3\}$}
In this section, towards a contradiction, assume $\{\omega \cdot 2, \omega^\star \cdot 2\} \leq_c \{\omega\cdot 3,\omega^\star \cdot 3\}$ via the enumeration operator $\Gamma$.

\begin{claim}\label{claim:no-omega-2}
  There is no copy $\A$ of $\omega$ such that $\Gamma(\A)$ contains a copy of $\omega \cdot 2$ as a substructure.
\end{claim}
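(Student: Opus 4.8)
The plan is to argue by contradiction: assume some copy $\mathcal{A}\cong\omega$ has $\Gamma(\mathcal{A})$ containing a copy of $\omega\cdot 2$, witnessed by two infinite ascending chains $\overline{x}=(x_i)_i$ and $\overline{y}=(y_i)_i$ with every $x_i$ below every $y_j$. First I would record the ambient constraint: appending a fresh copy of $\omega$ gives $\mathcal{A}\subseteq\mathcal{A}+\mathcal{A}'\cong\omega\cdot 2$, so by monotonicity $\Gamma(\mathcal{A})\subseteq\Gamma(\mathcal{A}+\mathcal{A}')\cong\omega\cdot 3$; thus $\Gamma(\mathcal{A})$ is a well-order of type between $\omega\cdot 2$ and $\omega\cdot 3$, and $\overline{x},\overline{y}$ are genuine ascending chains with no infinite descending chain present to interfere. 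Next, using Proposition~\ref{prop:alpha:finite} (each finite piece of $\mathcal{A}$ yields only finitely many elements) together with Proposition~\ref{prop:finite:intersection}, I would thin $\overline{x}$ and $\overline{y}$ to sequences of $\Gamma$-pairs $(x_i,F_i)$ and $(y_i,G_i)$ whose finite supports $F_i,G_i\subseteq\mathcal{A}$ are pairwise disjoint and appear in increasing order inside $\mathcal{A}$.

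The strategy is then to re-assemble these support blocks into an \emph{in-class} structure whose $\Gamma$-image is forced to exceed its prescribed value. Since $\omega\cdot 3$ is a well-order, it has no suborder of type $\omega\cdot 4$; hence it would suffice to build a copy $\mathcal{M}\cong\omega\cdot 2$ and exhibit four stacked infinite ascending chains in $\Gamma(\mathcal{M})\cong\omega\cdot 3$. The transport is powered by the forcing relation $\Vdash_\Gamma$ and monotonicity: whenever two support blocks occur in $\mathcal{M}$ in the same relative order as in $\mathcal{A}$, the corresponding forced relation (such as $x_i<x_{i+1}$, $y_i<y_{i+1}$, or $x_i<y_j$) is inherited by $\Gamma(\mathcal{M})$, and Proposition~\ref{prop:no-chains} guarantees that the two chains coming from a single copy of $\omega$ stay comparably stacked ($\overline{x}<^\star\overline{y}$) rather than interleaved. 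Arranging $\mathcal{M}=\sum_{i}F_i+\sum_{i}G_i$ reproduces $\overline{x}<\overline{y}$ inside $\Gamma(\mathcal{M})$, which accounts for two of the four needed chains.

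The step I expect to be the main obstacle is producing the remaining two chains. A single copy of $\omega$ furnishes only $\overline{x}<\overline{y}$, and pairing $\mathcal{A}$ with an \emph{arbitrary} second block $\mathcal{A}'$ and extracting one chain from $\mathcal{A}'$ (as in the lower-bound theorem of Section~\ref{sect:infinite:chain}) only rebuilds order type $\omega\cdot 3$, which is exactly what is permitted. To cross the threshold to $\omega\cdot 4$ one needs the second block to be \emph{good} as well, i.e. $\Gamma(\mathcal{A}')\supseteq\omega\cdot 2$; but because $\Gamma$ is isomorphism-invariant only on the class $\{\omega\cdot 2,\omega^\star\cdot 2\}$, a disjoint isomorphic copy of $\mathcal{A}$ need not be good, so the two chains cannot simply be duplicated. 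I would attack this either by showing that goodness of one $\omega$-block propagates to infinitely many pairwise disjoint $\omega$-blocks (decomposing $\mathcal{A}$ along the cofinal support blocks $F_i,G_i$ extracted above), or, failing that, by a global count of $\mathrm{otp}(\Gamma(\omega\cdot 2))$ that uses Proposition~\ref{prop:no-chains} to place $\Gamma(\mathcal{A})$ as an initial segment of $\Gamma(\mathcal{A}+\mathcal{A}')$ and plays the reversal symmetry of Proposition~\ref{prop:ascending:chain} against the rigid target $\omega^\star\cdot 3$ (where reversing the support order sends ascending chains to descending ones, and $\omega^\star\cdot 3$ admits no infinite ascending chain at all). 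Making one of these routes yield a surplus of order type that $\omega\cdot 3$ cannot absorb is the decisive difficulty, precisely because naive block-reversal flips the within-chain relations while Proposition~\ref{prop:no-chains} forbids the interleaved configuration for which Proposition~\ref{prop:ascending:chain} would otherwise let an ascending chain survive the reversal.
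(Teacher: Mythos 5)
Your setup and your target are exactly right, and they coincide with the paper's: assume $\Gamma(\mathcal{A})$ contains stacked chains $\overline{x}<\overline{y}$, extract finite supports inside $\mathcal{A}$, and reassemble them into a linear order of type $\omega\cdot 2$ (hence in the input class) whose image is forced to contain a copy of $\omega\cdot 4$, contradicting $\Gamma(\omega\cdot 2)\cong\omega\cdot 3$. But the step you yourself flag as ``the main obstacle'' --- producing the third and fourth chains --- is precisely the entire content of the paper's proof, and your proposal does not supply it. Of the two routes you sketch, the first (propagation of ``goodness'' to disjoint $\omega$-blocks) is the right one but is left as an unproved hope, and the second (a global count playing the reversal symmetry of Proposition~\ref{prop:ascending:chain} against $\omega^\star\cdot 3$) is a dead end: no reversal argument is needed or used.

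The missing idea is a different choice of supports plus an even/odd split. Instead of taking separate supports $F_i$ for $x_i$ and $G_i$ for $y_i$ and placing all the $F$'s before all the $G$'s (which, as you observe, only reproduces the two chains you started with), choose each block to support one element of \emph{each} chain: by Proposition~\ref{prop:alpha:finite}, after thinning and relabelling one gets finite $\alpha_i$ with $\sum_{i\in\omega}\alpha_i\subseteq\mathcal{A}$ and $x_i,y_i\in\Gamma(\alpha_i)$ (equivalently, merge your $F_i$ and $G_i$ into a single block). Then form $\mathcal{B}=\sum_{i\in\omega}\alpha_{2i}+\sum_{i\in\omega}\alpha_{2i+1}$, which has type $\omega\cdot 2$. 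Now each half of $\mathcal{B}$ carries \emph{both} chains: even-indexed (resp.\ odd-indexed) blocks occur in $\mathcal{B}$ in the same relative order as in $\mathcal{A}$, so the forcing relation $\Vdash_\Gamma$ transports all relations $x_{2i}<x_{2j}$, $y_{2i}<y_{2j}$, $x_{2i}<y_{2j}$ (and likewise for odd indices) into $\Gamma(\mathcal{B})$, yielding two stacked pairs $\overline{x}^{\,ev}<\overline{y}^{\,ev}$ and $\overline{x}^{\,od}<\overline{y}^{\,od}$. Finally, Proposition~\ref{prop:no-chains}, applied to sequences of $\Gamma$-pairs taken one from each half of $\mathcal{B}$, makes every cross pair among the four chains $<^\star$-comparable; since $<^\star$ is transitive and antisymmetric on disjoint $\omega$-chains, after discarding finitely many elements the four chains are linearly stacked, so $\Gamma(\mathcal{B})$ contains a copy of $\omega\cdot 4$ --- impossible inside $\omega\cdot 3$. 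This pairing of the two chains inside a common block is what makes ``goodness'' survive the split automatically; without it your construction stalls at $\omega\cdot 2$, exactly where you left it.
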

\begin{proof}
  Assume $\A$ is a copy of $\omega$ such that $\Gamma(\A)$ contains a copy of $\omega \cdot 2$.
  By Proposition~\ref{prop:alpha:finite}, it follows that we can take a sequence of finite linear orders $\{\alpha_i\}_{i\in\omega}$
  such that $\sum_{i\in\omega} \alpha_i \subseteq \mathcal{A}$,
  $\Gamma(\sum_{i\in\omega} \alpha_i) \models \bigwedge_{i,j \in \omega} x_i < y_j$, and
  $x_i,y_i \in \Gamma(\alpha_i)$.
  But then we can form the linear order $\B = \sum_{i\in\omega} \alpha_{2i} + \sum_{i\in\omega} \alpha_{2i+1}$ of type $\omega \cdot 2$ and by
  Proposition~\ref{prop:no-chains},
  $\Gamma(\B)$ will contain a copy of $\omega \cdot 4$, which is a contradiction.
\end{proof}

\begin{claim}\label{claim:omega-to-omega}
  If $\mathcal{A}$ is a copy of $\omega$, then $\Gamma(\mathcal{A})$ is a copy of $\omega$.
\end{claim}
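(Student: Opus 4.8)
The plan is to pin down the order type of $\Gamma(\mathcal A)$ from the structural constraints already in place. Since $\mathcal A\cong\omega$ extends to a copy $\hat{\mathcal A}\cong\omega\cdot 2$, monotonicity of $\Gamma$ gives $\Gamma(\mathcal A)\subseteq\Gamma(\hat{\mathcal A})\cong\omega\cdot 3$; hence $\Gamma(\mathcal A)$ is a suborder of a well-order, so it is itself a well-order with no infinite descending chain. By Claim~\ref{claim:no-omega-2} it contains no copy of $\omega\cdot 2$, so its order type is $<\omega\cdot 2$, i.e. it is finite or of the form $\omega+n$. Consequently it suffices to prove two things: that $\Gamma(\mathcal A)$ is \emph{infinite}, and that it has \emph{no greatest element}; the only well-order of type $<\omega\cdot 2$ satisfying both is $\omega$.

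For infiniteness I would analyze $W:=\Gamma(\mathcal A+\mathcal A')\cong\omega\cdot 3$, where $\mathcal A'\cong\omega$ is appended after $\mathcal A$. Each element of $W$ has a finite generator, which lies inside $\mathcal A$, inside $\mathcal A'$, or meets both; this partitions $W$ into the set $\Gamma(\mathcal A)$, the set $\Gamma(\mathcal A')$, and a ``spanning'' remainder. By Claim~\ref{claim:no-omega-2} the first two pieces have order type $<\omega\cdot 2$, and I would establish the same bound for the spanning piece by a dual application of the forcing machinery. Proposition~\ref{prop:no-chains} forbids the block-sequences coming from $\mathcal A$ and from $\mathcal A'$ to interleave, which I would upgrade to the statement that the three pieces are laid out monotonically, so that the order type of $W$ is the ordinal sum of the three pieces. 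Since each summand is $<\omega\cdot 2$, and a finite summand is either absorbed from the left or contributes nothing towards reaching $\omega\cdot 3$, the sum can equal $\omega\cdot 3$ only if $\Gamma(\mathcal A)$ is infinite.

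For ``no greatest element'' I would argue by contradiction using the rigidity of the forcing relation: by Corollary~\ref{cor:force:excluded-middle:general} the order of any two elements is decided as soon as both are enumerated, and by Proposition~\ref{prop:alpha:beta} this decision does not depend on the particular disjoint finite order witnessing it. If $\Gamma(\mathcal A)$ had a greatest element $m$, I would reverse the block decomposition $\mathcal A=\sum_i\alpha_i$ to build a copy of $\omega^\star\cdot 2$ out of the same finite blocks, and transport the relevant forced inequalities through Proposition~\ref{prop:ascending:chain}. The goal is to convert the ``bounded above'' configuration at $m$ into an interleaving pattern $x_i<y_i<x_{i+1}$ that survives the passage from the $\omega$-arrangement to the $\omega^\star$-arrangement, thereby producing an infinite ascending chain inside $\Gamma(\omega^\star\cdot 2)\cong\omega^\star\cdot 3$ and contradicting the absence of infinite ascending chains there.

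The main obstacle is the infiniteness step, and within it the two non-routine points are (a) bounding the order type of the spanning piece by $\omega\cdot 2$, since those elements are exactly the ones not directly controlled by Claim~\ref{claim:no-omega-2}, and (b) promoting the block-level non-interleaving of Proposition~\ref{prop:no-chains} to a genuine monotone layout of the three pieces at the level of individual elements, which is what legitimizes the order-type arithmetic. Proposition~\ref{prop:finite:intersection} will be the key tool for (b): it localizes all but finitely many output elements to a single block, so that each output element can be assigned a well-defined position relative to the $\mathcal A$/$\mathcal A'$ split; and Proposition~\ref{prop:alpha:finite} disposes of the degenerate case in which generators stay bounded, since there the output would be finite.
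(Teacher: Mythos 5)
Your opening reduction is sound and matches the paper's implicit setup: by monotonicity $\Gamma(\mathcal{A})\subseteq\Gamma(\hat{\mathcal{A}})\cong\omega\cdot 3$, so $\Gamma(\mathcal{A})$ is a well-order, Claim~\ref{claim:no-omega-2} caps its type below $\omega\cdot 2$, and it remains to rule out a greatest element and to prove infiniteness. The genuine gap is in the greatest-element step, which is the entire content of the paper's proof. Your plan is to extract an interleaving pattern $x_i<y_i<x_{i+1}$ and push it through Proposition~\ref{prop:ascending:chain}. But hypothesis~(i) of that proposition concerns the arrangement $\sum_{i\in\omega}\alpha_i+\sum_{i\in\omega}\beta_i$, in which \emph{every} $\alpha$-block precedes \emph{every} $\beta$-block, whereas the inequalities you actually possess (those witnessing that $\Gamma(\mathcal{A})$ is bounded by $m$) live in $\Gamma(\mathcal{A})$, where the blocks alternate. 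A forced inequality passes from one arrangement to another only when the finite condition forcing it embeds in both with the same relative order: for the ``return'' inequalities $y_i<x_{i+1}$ the witnessing condition is $\beta_i+\alpha_{i+1}$, which appears reversed, as $\alpha_{i+1}+\beta_i$, inside $\sum\alpha_i+\sum\beta_i$, so nothing transfers. (Proposition~\ref{prop:alpha:beta} does not rescue this: it requires $x,y\in\Gamma(\alpha)\cap\Gamma(\beta)$, and it does not say that the decision is independent of how two blocks, each generating only one of the two elements, are merged --- yet that independence is exactly what your transport needs.) Indeed, by Proposition~\ref{prop:no-chains} the pattern of hypothesis~(i) can never hold in such an arrangement, so \emph{deriving} it from the greatest-element assumption is the whole difficulty, and your sketch does not do it. Note also that inside $\Gamma(\mathcal{A})$ there is nothing above $m$, so there is no second sequence to interleave with. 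The paper's proof uses a genuinely different device that your proposal never invokes: a \emph{second} copy $\mathcal{B}$, so that $\Gamma(\mathcal{A}+\mathcal{B})\cong\omega\cdot 3$ supplies infinitely many elements $z_i$ above $m$ (with generators meeting $\mathcal{B}$); one then interleaves blocks of $\mathcal{A}$ and $\mathcal{B}$ into a single copy $\mathcal{C}\cong\omega$ in which all the facts $x_i<m<z_j$ are forced, whence $\Gamma(\mathcal{C})$ contains a copy of $\omega\cdot 2$, contradicting Claim~\ref{claim:no-omega-2}.

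The infiniteness step is also not secured. Your bound on the spanning piece is left as ``a dual application of the forcing machinery,'' but those elements have generators meeting both copies, so neither Claim~\ref{claim:no-omega-2} nor Proposition~\ref{prop:no-chains} (whose two chains must come from disjoint finite orders) applies to them; a new argument is required there. More seriously, the ``monotone layout'' of the three pieces on which your ordinal arithmetic rests is essentially the trichotomy~(\ref{case:right})--(\ref{case:middle}) that the paper establishes only \emph{after} and \emph{by means of} Claim~\ref{claim:omega-to-omega} together with Claim~\ref{claim:no-chains:2-3}; you give no independent derivation, so as written your route is circular. (To be fair, the paper's own proof opens with ``assume $\Gamma(\mathcal{A})\cong\omega+\ell$, $\ell>0$,'' thereby passing over the finite case silently; you attempt more than the paper does here, but the attempt does not close the gap, and unlike yours the paper's core argument uses nothing downstream of the claim.)
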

\begin{proof}
  Fix a copy $\A$ of $\omega$ and assume that $\Gamma(\A) \cong \omega + \ell$, where $\ell > 0$.
  Let $y$ be the greatest element in $\Gamma(\A)$ and let $\hat\alpha$ be an initial segment of $\A$
  such that $y \in \Gamma(\hat\alpha)$.
  Let $\B$ be another copy of $\omega$.
  Since $\Gamma(\A+\B) \cong \omega \cdot 3$,
  we can partition $\A\setminus\hat\alpha$ and $\B$ into finite suborderings $\alpha_i$ and $\beta_i$, respectively, and choose elements $x_i$ and $z_i$ such that for all $i$, $\hat\alpha + \alpha_i + \beta_i \Vdash_\Gamma x_i < y < z_i$.
  It follows that $\Gamma(\hat\alpha + \sum_{i\in\omega}(\alpha_i+\beta_i))$ contains a copy of $\omega \cdot 2$, which is a contradiction by Claim~\ref{claim:no-omega-2}.
\end{proof}

\begin{claim}\label{claim:no-chains:2-3}
  For any two sequences of $\Gamma$-pairs $(x_i,\alpha_i)_{i\in\omega}$ and $(y_i,\beta_i)_{i\in\omega}$,
  \[\Gamma(\sum_{i\in\omega} \alpha_i + \sum_{i\in\omega} \beta_i) \models \overline{x} <^\star \overline{y}.\]
\end{claim}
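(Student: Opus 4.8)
The plan is to start from the dichotomy supplied by Proposition~\ref{prop:no-chains}. Applied to our operator, with $\C = \omega\cdot 3$ and $\D = \omega^\star\cdot 3$ (which indeed have no infinite descending, resp.\ ascending, chains), it yields that $\Gamma(\sum_{i\in\omega}\alpha_i + \sum_{i\in\omega}\beta_i)$ satisfies $\overline{x} <^\star \overline{y}$ or $\overline{y} <^\star \overline{x}$. It therefore suffices to rule out the second disjunct, and the whole argument is a proof by contradiction assuming $\overline{y} <^\star \overline{x}$.

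First I would localize the assumption to the level of forcing. Since all the $\alpha$'s precede all the $\beta$'s in the forward order $\sum_{i\in\omega}\alpha_i + \sum_{i\in\omega}\beta_i$, the finite order $\alpha_j + \beta_i$ is an induced suborder of it for every $i,j$; as $\Gamma$ of the forward order models $y_i < x_j$ for all large $i,j$, Proposition~\ref{prop:force:either} forces $\alpha_j + \beta_i \Vdash_\Gamma y_i < x_j$ for all $i,j$ past the relevant threshold $q$ (were $\alpha_j+\beta_i$ to force $x_j<y_i$, monotonicity would propagate this to the forward order, a contradiction).

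The key step is then to repackage the two blocks into a \emph{single} copy of $\omega$. I would set $M = \alpha_0 + \beta_0 + \alpha_1 + \beta_1 + \cdots$, which is a copy of $\omega$, so that Claim~\ref{claim:omega-to-omega} gives $\Gamma(M) \cong \omega$. In $M$ the block $\alpha_{q+1}$ precedes $\beta_i$ for every $i \ge q+1$, so the induced suborder on their domains is exactly $\alpha_{q+1} + \beta_i$; combined with the forcing fact above and the fact that $\Gamma(M)$ is a total order, this forces $\Gamma(M) \models y_i < x_{q+1}$ for all $i \ge q+1$. But $\{y_i : i \ge q+1\}$ is an infinite set of distinct elements, hence unbounded in $\Gamma(M) \cong \omega$, while $x_{q+1}$ is an upper bound for it --- the desired contradiction. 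With $\overline{y} <^\star \overline{x}$ excluded, Proposition~\ref{prop:no-chains} leaves $\overline{x} <^\star \overline{y}$.

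I expect the main obstacle to be resisting the temptation to mimic the proof of Proposition~\ref{prop:no-chains} by reversing both blocks and invoking Proposition~\ref{prop:ascending:chain}: a pure separation $\overline{y} <^\star \overline{x}$ provides no \emph{interleaving} of the $x$'s and $y$'s, so no infinite ascending chain can be manufactured that way, and in fact $\overline{y} <^\star \overline{x}$ simply persists into the reversed $\omega^\star\cdot 2$-input without any contradiction. The real leverage comes from the order-type-specific fact that an infinite subset of $\omega$ must be unbounded, which is only available after collapsing to the single copy $\Gamma(M)\cong\omega$ via Claim~\ref{claim:omega-to-omega}. The one delicate point to get right is the block arrangement: since the forcing relation can in general depend on the order of disjoint blocks, the interleaving $M$ must be chosen so that $\alpha_j$ still precedes $\beta_i$ (which the alternating pattern guarantees precisely when $i \ge j$), keeping us in the regime where the forward-order forcing $\alpha_j + \beta_i \Vdash_\Gamma y_i < x_j$ is legitimately available.
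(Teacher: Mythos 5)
Your proof is correct and takes essentially the same route as the paper: the dichotomy of Proposition~\ref{prop:no-chains}, followed by a contradiction obtained from Claim~\ref{claim:omega-to-omega} applied to a single copy of $\omega$ assembled from both the $\alpha$-blocks and the $\beta$-blocks. The only difference is mechanical: the paper uses the copy $\alpha_i + \sum_{j\geq i}\beta_j$ and pushes ``only finitely many $y_j$ lie below $x_i$'' upward to the big order by monotonicity, whereas you interleave all blocks into $M$ and push the forcing $\alpha_j+\beta_i \Vdash_\Gamma y_i < x_j$ downward into $\Gamma(M)$; either way the contradiction is the same, namely an element of $\Gamma(M)\cong\omega$ with infinitely many predecessors.
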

\begin{proof}
  In view of Proposition~\ref{prop:no-chains}, towards a contradiction, assume that there is some $q$ such that
  \[\Gamma(\sum_{i\in\omega} \alpha_i + \sum_{i\in\omega} \beta_i) \models \bigwedge_{i,j>q} y_j < x_i.\]
  Consider some $x_i$, where $q < i$, and consider the linear order $\mathcal{A} = \alpha_i + \sum_{j \geq i} \beta_j$ of order type $\omega$.
  By Claim~\ref{claim:omega-to-omega}, $\Gamma(\mathcal{A})$ has order type $\omega$
  and hence only finitely many elements $y_j$ are to the left of $x_i$.
  But $\mathcal{A} \subseteq \sum_{i\in\omega} \alpha_i + \sum_{i\in\omega} \beta_i$.
  By monotonicity of $\Gamma$,
  only finitely many $y_j$ are to the left of $x_i$ in the linear order $\Gamma(\sum_{i\in\omega} \alpha_i + \sum_{i\in\omega} \beta_i)$.
  We reach a contradiction. Thus, by Proposition~\ref{prop:no-chains},
  \[\Gamma(\sum_{i\in\omega} \alpha_i + \sum_{i\in\omega} \beta_i) \models \overline{x} <^\star \overline{y}.\]
\end{proof}

Let $\mathcal{A}$, $\mathcal{B}$ and $\mathcal{C}$ be linear orders of type $\omega$.
According to Claim~\ref{claim:omega-to-omega} we have that $\Gamma(\mathcal{A})$ and $\Gamma(\mathcal{B})$ have order types $\omega$
and by Claim~\ref{claim:no-chains:2-3} we have one of the following three possible cases:
\begin{align}
  \Gamma(\mathcal{A}+\mathcal{B}) & = \Gamma(\mathcal{A}) +^\star \Gamma(\mathcal{B}) +^\star \mathcal{C} \label{case:right}\\
  \Gamma(\mathcal{A}+\mathcal{B}) & = \mathcal{C} +^\star \Gamma(\mathcal{A}) +^\star \Gamma(\mathcal{B}) \label{case:left}\\
  \Gamma(\mathcal{A}+\mathcal{B}) & = \Gamma(\mathcal{A}) +^\star \mathcal{C} +^\star \Gamma(\mathcal{B}) \label{case:middle}.
\end{align}

In view of Proposition~\ref{prop:alpha:finite}, we may suppose that the linear orders are partitioned
so that $\mathcal{A} = \sum_{i\in\omega} \alpha_i$,
$\mathcal{B} = \sum_{i\in\omega} \beta_i$ and we have the distinct elements
$x_i \in \Gamma(\alpha_i)$, $y_i \in \Gamma(\beta_i)$ and $z_i \in \Gamma(\alpha_i+\beta_i)$,
which belong to the domains of $\Gamma(\mathcal{A})$, $\Gamma(\mathcal{B})$ and $\mathcal{C}$, respectively.
We will show that none of the above three cases are possible.

For Case~(\ref{case:right}), we may assume that
\[\Gamma(\sum_{i\in\omega} \alpha_i + \sum_{i\in\omega} \beta_i) \models \bigwedge_{i,j,k \in \omega} x_i < y_j < z_k.\]
Then $\alpha_0 + \beta_0 \Vdash_\Gamma y_0 < z_0$ and for any $i > 0$, $\alpha_0 + \beta_0 + \beta_i \Vdash_\Gamma y_i < z_0$.
It follows that
\[\Gamma(\alpha_0 + \sum_{i\in\omega}\beta_i) \models \bigwedge_{i\in\omega}y_i < z_0.\]
Hence $\Gamma(\alpha_0 + \sum_{i\in\omega}\beta_i)$ contains a copy of $\omega + 1$, which is a contradiction by Claim~\ref{claim:omega-to-omega}.

For Case~(\ref{case:left}), we may assume that
\[\Gamma(\sum_{i\in\omega} \alpha_i + \sum_{i\in\omega} \beta_i) \models \bigwedge_{i,j,k \in \omega} z_k < x_i < y_j.\]
Then $\alpha_0 + \beta_0 \Vdash_\Gamma z_0 < x_0$ and for any $i > 0$, $\alpha_0 + \alpha_i + \beta_i \Vdash_\Gamma z_i < x_0$.
It follows that
\[\Gamma(\sum_{i\in\omega}(\alpha_i+\beta_i)) \models \bigwedge_{i\in\omega} z_i < x_0.\]

Hence $\Gamma(\sum_{i\in\omega}(\alpha_i + \beta_i))$ contains a copy of $\omega + 1$, which is a contradiction by Claim~\ref{claim:omega-to-omega}.

Now for Case~(\ref{case:middle}), we may assume that
\begin{equation}
  \label{eq:2}
  \Gamma(\sum_{i\in\omega} \alpha_i + \sum_{i\in\omega} \beta_i) \models \bigwedge_{i,j,k \in \omega} x_i < z_k < y_j.
\end{equation}
We know that for all $i$, $\alpha_i + \beta_0 \Vdash_\Gamma x_i < y_0$.
We also know that
\[\Gamma(\sum_{i\in\omega}(\alpha_i + \beta_i)) \not\models \overline{x} <^\star \overline{y}.\]
Otherwise, we would get a contradiction with Claim~\ref{claim:no-omega-2}.
It follows that there is some index $\ell>0$ such that
\[\Gamma(\sum_{i\in\omega}(\alpha_i + \beta_i)) \models \bigwedge_{j>\ell}y_0 < x_j.\]
Hence $\beta_0 + \alpha_{\ell+1} \Vdash_\Gamma y_0 < x_{\ell+1}$.
Moreover, (\ref{eq:2}) implies
\[\alpha_{\ell} + \beta_0 + \beta_{\ell} \Vdash_\Gamma z_{\ell} < y_0\text{ and } \alpha_{\ell} + \alpha_{\ell+1} + \beta_{\ell} \Vdash_\Gamma x_{\ell+1} < z_{\ell}.\]
Combining all of the above, by monotonicity of enumeration operators, it follows that
\[\alpha_{\ell} + \beta_0 + \alpha_{\ell+1} + \beta_{\ell} \Vdash_\Gamma y_0 < x_{\ell+1} < z_{\ell} < y_0,\]
which is a contradiction.  

We considered all possible cases and in each one of them we reach a contradiction. Thus, we conclude that
$\{\omega \cdot 2, \omega^\star \cdot 2\} \not\leq_c \{\omega \cdot 3, \omega^\star \cdot 3\}$.

\section{The general case}

Suppose that $\{\omega\cdot k, \omega^\star \cdot k\} \leq_c \{\omega\cdot t, \omega^\star \cdot t\}$ via the enumeration operator $\Gamma$, where $t = km + r$ for some $m$ and $0 \leq r < k$.
By Theorem~\ref{th:include-omega-k} we know that $m \geq 1$.
It is straightforward to see that for each $m \geq 1$ there is an enumeration operator $\Gamma_m$ such that $\Gamma_m:\{\omega\cdot k, \omega^\star \cdot k\} \leq_c \{\omega\cdot (km), \omega^\star \cdot (km)\}$.
This $\Gamma_m$ just copies the input structure $m$ number of times. In this section we will show that the enumeration operators cannot be any ``smarter'' than this, i.e. it is only possible to have $r = 0$.

\begin{proposition}\label{prop:no-omega-tail}
  If $\A$ is a copy of $\omega$ such that $\Gamma(\A) \cong \omega \cdot m + \ell$, where $\ell > 0$,
  then there is another copy $\hat\A$ of $\omega$ with $dom(\hat\A) = dom(\A)$ such that $\Gamma(\hat\A)$ has the type of a limit ordinal at least $\omega \cdot (m+1)$.
\end{proposition}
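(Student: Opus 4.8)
The plan is to locate the finite piece of $\A$ responsible for the finite tail of $\Gamma(\A)$, to show that this piece cannot permanently ``cap'' the output at its maximum, and then to re-use the elements of $\A$ lying beyond this piece to manufacture a cofinal strictly increasing $\omega$-sequence above the old maximum. Destroying the maximum turns the successor ordinal $\omega\cdot m+\ell$ into a limit ordinal, and arranging the reordering so that the original limit part survives keeps the type at least $\omega\cdot(m+1)$.

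First I would fix notation. Since $\Gamma(\A)\cong\omega\cdot m+\ell$ with $\ell>0$, the order $\Gamma(\A)$ has a greatest element $y$. Because an enumeration operator is a c.e.\ set of pairs with finite first coordinate, there is a finite initial segment $\hat\alpha$ of $\A$ with $y\in\Gamma(\hat\alpha)$; enlarging $\hat\alpha$ if necessary (it stays finite by Proposition~\ref{prop:alpha:finite}) I may assume $\hat\alpha$ also forces, via $\Vdash_\Gamma$, the order among all elements of the finite tail. Write $\A=\hat\alpha+\A'$ with $\A'\cong\omega$. Every output element produced while reading $\A'$ lands below $y$ in $\Gamma(\A)$, so by Corollary~\ref{cor:force:excluded-middle:general} each such element is $\Vdash_\Gamma$-forced below $y$ by a finite initial segment of $\A$.

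The conceptual lever is that $y$ is not a genuine ceiling. Embedding $\A$ as the first block of a copy of $\omega\cdot k$, say $\omega\cdot k=\A+\mathcal{R}$ with $\mathcal{R}\cong\omega\cdot(k-1)$ on a fresh domain, we have $\Gamma(\omega\cdot k)\cong\omega\cdot t$, which is a \emph{limit} ordinal and hence has no greatest element. In particular some output element lies above $y$, and by finiteness of use there is a finite $\beta\supseteq\hat\alpha$ with $\beta\Vdash_\Gamma y<w$ for a suitable $w$. Splitting $\beta$ along the cut between $\A$ and $\mathcal{R}$ exhibits a finite \emph{growth gadget}: a finite linear order $\delta$, an interval of $\mathcal{R}$, such that placing $\delta$ after $\hat\alpha$ forces a new element strictly above $y$.

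Finally I would build $\hat\A$. Partition $\A'$ into consecutive finite blocks, each order-isomorphic to $\delta$, interleaving them with a sparse cofinal subsequence of the original $\A'$-ordering and placing every inserted copy at a finite position so that $\hat\A\cong\omega$. Using Proposition~\ref{prop:alpha:beta} to relocate the gadget's forcing behaviour into $\A'$, and amalgamation arguments in the style of the proof of Proposition~\ref{prop:finite:intersection} to string the successive copies together, I would argue that these copies force an infinite strictly increasing sequence of output elements, each above $y$, so that $\Gamma(\hat\A)$ has no maximum, while the untouched part of $\A'$ still forces a copy of the original limit $\omega\cdot m$ below $y$; hence $\Gamma(\hat\A)$ is a limit ordinal of type at least $\omega\cdot m+\omega=\omega\cdot(m+1)$. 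The \textbf{main obstacle} is precisely this relocation step: an enumeration operator may react quite differently to isomorphic finite diagrams built from different numerals, so one cannot naively transplant $\delta$ from $\mathcal{R}$ into $\A'$. Overcoming it requires pushing the domain-independence of forced order relations (Proposition~\ref{prop:alpha:beta}) far enough to guarantee that \emph{some} $\A'$-realisation of $\delta$ still produces an element above $y$, and then iterating this infinitely while keeping control of the resulting order type.
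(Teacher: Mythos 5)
Your overall strategy (append an $\omega\cdot(k-1)$-shaped reservoir so that the output becomes a copy of $\omega\cdot t$, hence a limit, then extract finite forcing pieces and iterate them to destroy the maximum while keeping type $\omega$) is the right one, and it is the paper's strategy too. But the step you yourself flag as the main obstacle is a genuine gap, and the tool you propose for it cannot close it. Because you take the reservoir $\mathcal{R}$ on a \emph{fresh} domain, disjoint from $\dom(\A)$, you are forced to ``transplant'' the gadget $\delta\subseteq\mathcal{R}$ onto numerals from $\A'$. An enumeration operator, however, acts on concrete atomic diagrams over concrete natural numbers and has no isomorphism-invariance on finite inputs: nothing whatsoever guarantees that for an order-isomorphic $\delta'$ built from $\dom(\A')$ the set $\Gamma(\hat\alpha+\delta')$ contains \emph{any} element above $y$. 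Proposition~\ref{prop:alpha:beta} cannot supply this: it only transfers the forcing of an order relation between two output elements $x,y$ that \emph{already} lie in $\Gamma(\alpha)\cap\Gamma(\beta)$ for disjoint $\alpha,\beta$; it cannot make a new finite input enumerate a new output element. (If anything, Proposition~\ref{prop:finite:intersection} points the other way: output elements enumerated from two disjoint finite inputs are scarce.) So the relocation step, as proposed, has no support, and the construction stalls at the very first transplanted gadget; the subsequent infinite iteration inherits the same problem.

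The paper avoids the issue by a move you are missing: it never uses fresh numerals at all. By Proposition~\ref{prop:alpha:finite} one can thin $\A$ to a subcopy $\B=\sum_i\alpha_i\subseteq\A$ with $\Gamma(\B)\cong\Gamma(\A)$ such that $C=\dom(\A)\setminus\dom(\B)$ is \emph{infinite}; the reservoir $\C\cong\omega\cdot(k-1)$ is then built from $C$ itself. Now $\B+\C$ is a copy of $\omega\cdot k$ whose domain is exactly $\dom(\A)$, so $\Gamma(\B+\C)\cong\omega\cdot t$ has no greatest element, and one finds finite initial segments $\beta_0\subseteq\B$, $\gamma_0\subseteq\C$ with $\beta_0+\gamma_0\Vdash_\Gamma a_0<c_0$ for the old maximum $a_0$; iterating, with $a_{i+1}$ the current maximum, yields $\hat\A=\sum_{i\in\omega}(\beta_i+\gamma_i)$ of type $\omega$ and domain $\dom(\A)$. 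Since each forcing piece consists of the very numerals that appear in $\hat\A$, plain monotonicity of $\Gamma$ (no invariance) gives $\Gamma(\hat\A)\supseteq\Gamma(\B)$ together with an infinite increasing sequence above $a_0$, and the same monotonicity argument shows $\Gamma(\hat\A)$ can have no greatest element, hence is a limit of type at least $\omega\cdot(m+1)$. If you replace your fresh-domain reservoir and transplantation by this thinning step, the rest of your outline goes through essentially as in the paper.
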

\begin{proof}
  By Proposition~\ref{prop:alpha:finite}, we can choose a sequence $\alpha_i$ of finite suborderings of $\A$
  such that for $\B = \sum_i \alpha_i$, $\Gamma(\B) \cong \Gamma(\A)$ and the set
  $C = \text{dom}(\A) \setminus \text{dom}(\B)$ is infinite.
  We can order the elements of $C$ in a linear order $\mathcal{C}$ of type $\omega \cdot (k-1)$.
  Let $a_0$ be the greatest element in $\Gamma(\B)$.
  Since $\B + \C$ has type $\omega \cdot k$ and $\Gamma(\B+\C)$ has no greatest element, there is some $c_0$
  such that $\Gamma(\B+\C) \models a_0 < c_0$.
  Fix some finite initial segments $\beta_0$ of $\B$ and $\gamma_0$ of $\C$
  such that $\beta_0 + \gamma_0 \Vdash_\Gamma a_0 < c_0$.
  Let $\B_0 = \B \setminus \beta_0$ and $\C_0 = \C \setminus \gamma_0$.
  Consider the linear order $\hat\B_0 = \beta_0 + \gamma_0 + \B_0$ of type $\omega$.
  Clearly, $\Gamma(\hat\B_0) \models a_0 < c_0$ and since $\B$ is included in $\hat\B_0$,
  $\Gamma(\hat\B_0)$ has a type at least $\omega \cdot m + \ell + 1$.

  Now we let $a_1$ be the greatest element in $\Gamma(\hat\B_0)$, if such exists, or
  we let $a_1 = c_0$.
  Since $\hat\B_0 + \C_0$ has type $\omega \cdot k$ and $\Gamma(\hat\B_0 + \C_0)$ has no greatest element,
  there is some $c_1$ such that $\Gamma(\hat\B_0 + \C_0) \models a_0 < c_0 \leq a_1 < c_1$.
  Fix some finite initial segments $\beta_1$ of $\B_0$ and $\gamma_1$ of $\C_0$
  such that $\beta_0 + \gamma_0 + \beta_1 + \gamma_1 \Vdash_\Gamma a_0 < c_1 \leq a_1 < c_1$.
  Let $\B_1 = \B_0 \setminus \beta_1$ and $\C_1 = \C_0 \setminus \gamma_1$.
  Consider the linear order $\hat\B_1 = \beta_0 + \gamma_0 + \beta_1 + \gamma_1 + \B_1$ of type $\omega$.
  Clearly, $\Gamma(\hat\B_1) \models a_0 < c_0 \leq a_1 < c_1$ and since $\hat\B_0$ is included in $\hat\B_1$,
  $\Gamma(\hat\B_1)$ has a type at least $\omega \cdot m + \ell + 2$.

  It is clear that we can iterate this construction for every $i$ and produce
  $\hat\B_i = \sum_{j \leq i} (\beta_j + \gamma_j) + \mathcal{B}_{i}$ of type $\omega$
  such that $\Gamma(\hat\B_i)$ has order type at least $\omega\cdot m + \ell + i + 1$.
  In the end we let
  \[\hat\A = \sum_{i\in\omega} (\beta_i + \gamma_i)\] 
  such that $\Gamma(\hat\A)$ has order type at least $\omega \cdot (m+1)$.
  To finish the proof we must show that $\Gamma(\hat\A)$ has the type of a limit ordinal.
  Assume that $\Gamma(\hat\A)$ has a greatest element $d$.
  Fix some index $q$ such that $d \in \Gamma(\sum_{i \leq q}(\beta_i + \gamma_i))$.
  But then $d \in \Gamma(\hat\B_q)$ and since $\hat\B_q \subseteq \hat\A$, by monotonicity, it follows that $d$ is the greatest
  element in $\Gamma(\hat\B_q)$, i.e. $d = a_{q+1}$. At the next stage we will find an element $c_{q+1}$ such that
  $\Gamma(\hat\B_{q+1}) \models d < c_{q+1}$ and by monotonicity, $\Gamma(\hat\A) \models d < c_{q+1}$.
\end{proof}

For an infinite set $M$, let us denote
\[\text{Ord}(M) = \sup\{\Gamma(\mathcal{A}) \mid \mathcal{A} \cong \omega\ \&\ \text{dom}(\mathcal{A}) \subseteq M\}.\]

By Proposition~\ref{prop:no-omega-tail},
$\text{Ord}(M)$ is a limit ordinal.
Let us consider sequences of sets $M_0,M_1,\dots$, where $M_0 = \mathbb{N}$ and $\text{Ord}(M_{i+1}) < \text{Ord}(M_i)$.
We know that any such sequence is finite.
Fix one such finite sequence ${(M_i)}_{i\leq p}$, which cannot be extended further.
Let $m$ be such that $\omega \cdot m = \text{Ord}(M_p)$.
Any copy $\A$ with $\text{dom}(\A) \subseteq M_p$ is such that $\Gamma(\A) \cong \omega \cdot m$.
Otherwise, we would be able to extend the finite sequence with another set.
From now on in this section, we will always suppose that we work with copies of $\omega$ whose domains are subsets of $M_p$
and thus any copy of $\omega$ will produce a copy of $\omega \cdot m$ via $\Gamma$.

The next proposition is a generalization of Claim~\ref{claim:no-chains:2-3}.

\begin{proposition}\label{prop:no-merge-general}
  Let $\A$, $\A_1,\dots,\A_m$ and $\B$, $\B_1,\dots,\B_m$ be copies of $\omega$ such that
  $\Gamma(\A) = \A_1 + \A_2 + \cdots + \A_m$ and
  $\Gamma(\B) = \B_1 + \B_2 + \cdots + \B_m$.
  Then
  \[\Gamma(\A+\B) \supseteq \A_1 +^\star \B_1 + \A_2 +^\star \B_2 + \cdots + \A_m +^\star \B_m.\]
\end{proposition}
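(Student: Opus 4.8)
The plan is to determine exactly how the $2m$ blocks $\A_1,\dots,\A_m$ and $\B_1,\dots,\B_m$ are interleaved inside $\Gamma(\A+\B)$, and then to read off the claimed order. First I would collect the structural facts that come for free. Since $\A,\B\subseteq\A+\B$, monotonicity gives $\Gamma(\A),\Gamma(\B)\subseteq\Gamma(\A+\B)$ with all order relations preserved, so inside $\Gamma(\A+\B)$ the blocks still appear in the orders $\A_1<\dots<\A_m$ and $\B_1<\dots<\B_m$. Also $\A+\B\cong\omega\cdot 2$ is an initial segment of a copy of $\omega\cdot k$, so $\Gamma(\A+\B)$ embeds in $\omega\cdot t$ and is a well-order with no infinite descending chain; thus each $\A_i,\B_j$ is an $\omega$-block of it.

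Next I would show the blocks are linearly arranged, i.e.\ that they form a merge of the two chains. Applying the general form of Proposition~\ref{prop:no-chains} (whose proof uses only that $\omega$-arrangements of the finite pieces extend to $\omega\cdot k$ and $\omega^\star$-arrangements to $\omega^\star\cdot k$) to cofinal sequences of $\Gamma$-pairs chosen from $\A_i$ and from $\B_j$ shows every pair $\{\A_i,\B_j\}$ is $<^\star$-comparable; in a well-order two $\omega$-blocks are $<^\star$-comparable iff their suprema differ, so the $2m$ suprema are distinct and linearly order the blocks, refining the two given chains. Hence the order on $\Gamma(\A)\cup\Gamma(\B)$ is encoded by $b_i=\#\{j:\B_j<^\star\A_i\}+1$, with $b_1\le\dots\le b_m$, and the whole proposition reduces to proving $b_i=i$, equivalently $\A_i<^\star\B_i<^\star\A_{i+1}$ for every $i$.

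The engine for the constraints is the standing fact that any copy of $\omega$ with domain in $M_p$ has $\Gamma$-image $\cong\omega\cdot m$, together with the observation (forced by $\A<\B$) that every suborder of $\A+\B$ of type $\omega$ is either an infinite subset of $\A$ or a finite subset of $\A$ followed by an infinite subset of $\B$. Feeding a copy $\hat\alpha+T_{\B}$ of the latter kind into $\Gamma$ (with $\hat\alpha$ forcing a chosen $x\in\A_i$ and $T_{\B}$ a tail of $\B$) produces $\omega\cdot m$ whose $m$ blocks are cofinal parts of $\B_1,\dots,\B_m$; since $x$ cannot lie above the top block, we get $\A_i<^\star\B_m$ for all $i$, so $\B_m$ is the global top block. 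This is the right generalization of Claim~\ref{claim:no-chains:2-3}: when $m=1$ the inserted element has only finitely many predecessors and one immediately gets $\A_1<^\star\B_1$, recovering Claim~\ref{claim:omega-to-omega}.

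The hard part is the symmetric, lower half of the alternation — ruling out a $\B$-block sitting too low (say $\B_1<^\star\A_1$, or $b_i>i$ in general). The obstruction is intrinsic: because $\A$ lies below $\B$, an $\omega$-subcopy cofinal in $\B$ exhibits all of $\B_1,\dots,\B_m$ as full blocks but contains only finitely many $\A$-elements, while the only $\omega$-subcopies exhibiting full $\A$-blocks contain no $\B$-element; so the $\omega$-collapse above can never witness an $\A$-block lying below a $\B$-block, and a lone $\A$-element above $\B_1$ is consistent with $\Gamma(\hat\alpha+T_{\B})\cong\omega\cdot m$. To break the asymmetry I would pass to the reversed picture: by the symmetry of the hypotheses on $\Gamma$, copies of $\omega^\star$ map to $\omega^\star\cdot m$, and in the $\omega^\star$-arrangement $\sum_{i\in\omega^\star}\alpha_i+\sum_{i\in\omega^\star}\beta_i$ of the same finite pieces top and bottom are exchanged, so the dual collapse now exhibits full $\A$-blocks alongside finitely many $\B$-elements and delivers the missing lower bounds (dually, $\A_1$ is the global bottom block). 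The configurations obtained this way are transported back to our fixed order $\Gamma(\A+\B)$ through Proposition~\ref{prop:ascending:chain}, which matches ascending chains between the $\omega$- and the $\omega^\star$-arrangements. I expect the remaining work to be an induction that peels off the bottom block $\A_1$ and the top block $\B_m$ and recurses on the $2m-2$ interior blocks, at each stage combining the two collapse directions until $b_i=i$ is forced; passing finally to cofinal subsets absorbs the unavoidable finite mixing into the $+^\star$ inside each unit $\A_i+^\star\B_i$, leaving clean $+$ between consecutive units.
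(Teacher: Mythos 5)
Your opening moves are fine: monotonicity, the pairwise $<^\star$-comparability of the $2m$ blocks via the generalized Proposition~\ref{prop:no-chains}, and the identification of $\B_m$ as the top block by collapsing $\hat\alpha+T_{\B}$ to $\omega\cdot m$ are all correct, and you have correctly located the difficulty in the lower bounds. But the route you propose for that hard part fails, and it is exactly where the actual proof does something your plan never reproduces. First, your dual collapse rests on the premise that copies of $\omega^\star$ (on the relevant domains) have $\Gamma$-image $\omega^\star\cdot m$ with the \emph{same} multiplicity $m$. Nothing established so far gives this: the set $M_p$ and the number $m$ come from the $\mathrm{Ord}$-construction for copies of $\omega$ only; a symmetric construction yields some other set and some multiplicity $m^\star$, and the natural argument that $m^\star=m$ goes through precisely the counting that this proposition is designed to enable. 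Second, and worse, even granting such a collapse, the blocks of $\Gamma\bigl(\sum_{i\in\omega^\star}\alpha_i\bigr)$ are not ``full $\A$-blocks'': for $i<i'$ the relation between elements of $\Gamma(\alpha_i)$ and $\Gamma(\alpha_{i'})$ in this structure is forced by the \emph{reversed} configuration $\alpha_{i'}+\alpha_i$, so the dual image is a genuinely different order on overlapping elements, with no established correspondence to $\A_1,\dots,\A_m$; and Proposition~\ref{prop:ascending:chain} transports only alternating ascending chains with matched piece indices, not block structure, so it cannot carry ``$\A_1$ is the bottom block of the dual picture'' back to a statement about the block $\A_1$ of $\Gamma(\A)$ inside $\Gamma(\A+\B)$. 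The closing induction that should ``peel off'' blocks is left entirely unspecified, so the argument does not close.

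The idea you are missing is that you are not restricted to suborders of $\A+\B$ (which, as you yourself observe, can never witness an $\A$-element above a $\B$-element), nor do you need $\omega^\star$ at all: you may rearrange the finite pieces into the single interleaved copy of $\omega$, namely $\C=\sum_{i\in\omega}(\alpha_i+\beta_i)$. Then $\Gamma(\C)\cong\omega\cdot m$ has only $m$ limit points, while it contains, by monotonicity, the chains chosen inside $\A_1,\dots,\A_m$ and inside $\B_1,\dots,\B_m$; counting limit points forces $\A_j$ and $\B_j$ to merge into the $j$-th block of $\Gamma(\C)$. This yields both the literal cross relations $\overline{y}_j<\overline{x}_{j+1}$ and $\overline{x}_j<\overline{y}_{j+1}$, and, since the paired chains are cofinally intertwined in $\Gamma(\C)$, the relations $x^i_j<y^\ell_j$ for all large $\ell$. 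All of these are forced by finite configurations $\alpha_i+\beta_{i'}$ with $i\le i'$, which are suborders of $\C$ \emph{and} of $\A+\B$, so by Propositions~\ref{prop:force:either}, \ref{prop:force:excluded-middle} and \ref{prop:alpha:beta} they transfer verbatim to $\Gamma(\A+\B)$; combined with the dichotomy of Proposition~\ref{prop:no-chains}, this pins down $\overline{x}_j<^\star\overline{y}_j$ and produces the whole alternation at once, uniformly in $j$. Note finally that your reduction to the supremum pattern $b_i=i$ is lossy: the asserted conclusion has a genuine $+$ (no mixing at all) between $\B_j$ and $\A_{j+1}$, which suprema cannot see, and finite mixing there cannot be ``absorbed'' into $\A_j+^\star\B_j$; it is the literal relations $\overline{y}_j<\overline{x}_{j+1}$ coming from $\Gamma(\C)$, available for chains through any prescribed pair of elements, that deliver the clean separation.
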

\begin{proof}
  Let us first partition $\A$ and $\B$ into finite suborderings $\alpha_i$ and $\beta_i$ such that
  $\A = \sum_{i\in\omega}\alpha_i$, $\B = \sum_{i\in\omega} \beta_i$ and for all $i \in \omega$,
  $x^i_1,\dots,x^i_m \in \Gamma(\alpha_i)$ and $y^i_1,\dots,y^i_m \in \Gamma(\beta_i)$, where
  $\overline{x}_j = (x^i_j)_{i\in\omega}$ and $\overline{y}_j = (y^i_j)_{i\in\omega}$, for $1 \leq j \leq m$,
  represent $\omega$-chains of distinct elements belonging to $\A_j$ and $\B_j$, respectively.
  Let $\C = \sum_{i\in\omega}(\alpha_i + \beta_i)$.
  Since $\Gamma(\C) \cong \omega \cdot m$, and $\Gamma(\C) \models \overline{x}_1 < \overline{x}_2 < \cdots < \overline{x}_m$
  and $\Gamma(\C) \models \overline{y}_1 < \overline{y}_2 < \cdots < \overline{y}_m$, we have
  $\Gamma(\C) \models \overline{x}_j < \overline{y}_{j+1}$ and $\Gamma(\C) \models \overline{y}_j < \overline{x}_{j+1}$
  for $1 \leq j < m$.
  It follows that for all $i$,
  \[\alpha_i + \beta_i \Vdash_\Gamma \bigwedge^{m-1}_{j=1} x^i_j < y^i_{j+1}\ \&\ y^i_j < x^i_{j+1}.\]
  Hence
  \[\Gamma(\A+\B) \models \bigwedge^{m-1}_{j=1}\overline{x}_j < \overline{y}_{j+1}\ \&\ \overline{y}_j < \overline{x}_{j+1}.\]
  We can adapt the proof of Proposition~\ref{prop:no-chains} to show that for $1 \leq j \leq m$,
  \[\Gamma(\A+\B) \models \overline{x}_j <^\star \overline{y}_j\ \lor\ \overline{y}_j <^\star \overline{x}_j.\]
  But since $\Gamma(\C) \cong \omega \cdot m$, the chains $\overline{x}_j$ and $\overline{y}_j$ must be merged in $\Gamma(\C)$.
  Otherwise, $\Gamma(\C)$ would contain a copy of $\omega \cdot (m+1)$.
  It follows that there is some $q$ such that for all $\ell > q$,
  $\Gamma(\C) \models \bigwedge^m_{j=1}x^0_j < y^\ell_j$.
  Since $y^\ell_1,\dots,y^\ell_m \in \Gamma(\beta_\ell)$ and $x^0_1,\dots,x^0_m \in \Gamma(\alpha_0)$, we have
  \[\alpha_0 + \beta_\ell \Vdash_\Gamma \bigwedge^m_{j=1}x^0_j < y^\ell_j,\]
  and hence $\Gamma(\A+\B) \models \bigwedge^m_{j=1}\overline{x}_j <^\star \overline{y}_j$.
  We conclude that
  \[\Gamma(\A + \B) \models \overline{x}_1 <^\star \overline{y}_1 < \overline{x}_2 <^\star \overline{y}_2 < \cdots < \overline{x}_m <^\star \overline{y}_m.\]
\end{proof}

% By monotonicity of enumeration operators, we immediately obtain the following corollary.

\begin{corollary}\label{cor:no-merge-general}
  Let $\A_1,\dots,\A_k$ be copies of $\omega$ such that
  $\Gamma(\A_i) = \B^1_i + \cdots + \B^m_i$,
  where $\B^j_i$ are also copies of $\omega$.
  Then
  \[\Gamma(\sum^k_{i=1} \A_i) \supseteq \sum^m_{j=1}(\B^j_1 +^\star \B^j_2 +^\star \cdots +^\star \B^j_k).\]
\end{corollary}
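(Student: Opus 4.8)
The plan is to prove the corollary by induction on $k$, using Proposition~\ref{prop:no-merge-general} both as the base case and as the engine of the inductive step. For $k=1$ the statement is just the hypothesis $\Gamma(\A_1) = \B^1_1 + \cdots + \B^m_1$, and the case $k=2$ is exactly Proposition~\ref{prop:no-merge-general}. So suppose the statement holds for some $k \geq 1$ and consider copies $\A_1, \dots, \A_{k+1}$ of $\omega$ with $\Gamma(\A_i) = \B^1_i + \cdots + \B^m_i$. Write $\mathcal{P} = \A_1 + \cdots + \A_k$, so that $\mathcal{P} + \A_{k+1} = \sum^{k+1}_{i=1}\A_i$; the goal is to insert the single new copy $\A_{k+1}$ into the already-understood arrangement of the first $k$.

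The key point is that every order fact we need already lives inside $\Gamma(\mathcal{P} + \A_{k+1})$ by monotonicity of $\Gamma$. First, since $\mathcal{P}$ is a substructure of $\mathcal{P} + \A_{k+1}$, the inductive hypothesis yields
\[\Gamma(\mathcal{P}+\A_{k+1}) \supseteq \sum^m_{j=1}(\B^j_1 +^\star \B^j_2 +^\star \cdots +^\star \B^j_k).\]
Second, for each $i \leq k$ the order $\A_i + \A_{k+1}$ is precisely the substructure of $\mathcal{P} + \A_{k+1}$ induced on $\dom(\A_i) \cup \dom(\A_{k+1})$ (here we use $i \leq k$, so that all of $\A_i$ lies below $\A_{k+1}$). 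Applying Proposition~\ref{prop:no-merge-general} to the pair $\A_i, \A_{k+1}$ and pushing the conclusion forward by monotonicity gives
\[\Gamma(\mathcal{P}+\A_{k+1}) \supseteq \sum^m_{j=1}(\B^j_i +^\star \B^j_{k+1})\]
for every $i \leq k$, where the order pattern is preserved because it concerns a fixed suborder.

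It then remains to assemble these facts. The within-level claim is that, for each fixed $j$, the block $\B^j_{k+1}$ sits $+^\star$-above all of $\B^j_1, \dots, \B^j_k$; this is immediate because $\B^j_i +^\star \B^j_{k+1}$ holds for every $i \leq k$, so combining it with the arrangement $\B^j_1 +^\star \cdots +^\star \B^j_k$ from the inductive hypothesis places $\B^j_{k+1}$ at the right end of level $j$. The between-level claim is that for $j < j'$ every element of level $j$ lies below every element of level $j'$: for two copies among $\A_1, \dots, \A_k$ this is the clean separation in the inductive hypothesis, for a copy $\A_i$ (with $i \leq k$) against $\A_{k+1}$ it is the clean separation in the pairwise conclusion above, and for $\A_{k+1}$ against itself it is the given decomposition $\Gamma(\A_{k+1}) = \B^1_{k+1} + \cdots + \B^m_{k+1}$. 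Combining the two claims gives $\Gamma(\sum^{k+1}_{i=1}\A_i) \supseteq \sum^m_{j=1}(\B^j_1 +^\star \cdots +^\star \B^j_{k+1})$, completing the induction.

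The only real obstacle is the bookkeeping hidden in the symbol $+^\star$. Each pairwise application of Proposition~\ref{prop:no-merge-general} and each level of the inductive hypothesis contributes its own finite set of ``exceptional'' elements that may be mixed out of order; I must check that these do not accumulate. Since we combine only finitely many such facts (at most $\binom{k+1}{2}$ pairs across $m$ levels), the union of all exceptional sets stays finite, so within each level the resulting arrangement is genuinely of the form $\B^j_1 +^\star \cdots +^\star \B^j_{k+1}$, with a clean concatenation between levels. One should also confirm, exactly as in the proof of Proposition~\ref{prop:no-merge-general}, that within a fixed level the blocks $\B^j_1, \dots, \B^j_{k+1}$ all merge into a single $\omega$-block rather than splitting apart; this follows from transitivity of the pairwise merging relations, since each consecutive pair is forced to share a block.
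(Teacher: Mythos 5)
Your proof is correct and takes essentially the same route as the paper: the paper likewise applies Proposition~\ref{prop:no-merge-general} to the pairs $\A_i + \A_j$ (each a suborder of $\sum_{i}\A_i$), transfers the resulting order facts by monotonicity of $\Gamma$, and assembles them, illustrating this for $k=3$ and declaring the general case a straightforward generalization. Your induction on $k$ is just an explicit packaging of that generalization — unrolled, it invokes exactly the same pairwise facts and the same within-level/between-level assembly.
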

\begin{proof}
  We essentially use the monotonicity property of enumeration operators.
  For simplicity, consider the case of $k = 3$.
  By Proposition~\ref{prop:no-merge-general}, we know that
  \begin{align*}
    \Gamma(\A_1 + \A_2) & \supseteq \B^1_1 +^\star \B^1_2 + \cdots + \B^m_1 +^\star \B^m_2,\\
    \Gamma(\A_2 + \A_3) & \supseteq \B^1_2 +^\star \B^1_3 + \cdots + \B^m_2 +^\star \B^m_3,\\
    \Gamma(\A_1 + \A_3) & \supseteq \B^1_1 +^\star \B^1_3 + \cdots + \B^m_1 +^\star \B^m_3.
  \end{align*}
  Since $\Gamma(\A_1+\A_2)$, $\Gamma(\A_2+\A_3)$ and $\Gamma(\A_1+\A_3)$ are included in $\Gamma(\A_1+\A_2+\A_3)$, we obtain
  for $1 \leq i \leq m$ that $\Gamma(\A_1+\A_2+\A_3) \supseteq \B^i_1 +^\star \B^i_2 +^\star \B^i_3$,
  and for $1 \leq i < m$, $\Gamma(\A_1+\A_2+\A_3) \supseteq \B^i_3 + \B^{i+1}_1$.
  We conclude that 
  \[\Gamma(\A_1 + \A_2 + \A_3) \supseteq \B^1_1 +^\star \B^1_2 +^\star \B^1_3 + \cdots + \B^m_1 +^\star \B^m_2 +^\star \B^m_3.\]
\end{proof}

For $i=1,\dots,k$, let $\Gamma(\A_i) = \B^1_i + \cdots + \B^m_i$.
Recall that our enumeration operator $\Gamma$ is such that $\Gamma:\{\omega\cdot k, \omega^\star \cdot k\} \leq_c \{\omega\cdot t, \omega^\star \cdot t\}$, where $t = kq + r$ for some $q \geq 1$ and $0 \leq r < k$. 

If we assume that $r > 0$, by Corollary~\ref{cor:no-merge-general}, we have the following cases to consider
where an extra copy of $\omega$, denoted by $\C$, is placed:
\begin{align}
  \Gamma(\sum^k_{i=1} \A_i)   & \supseteq \B^m_k +^\star \C; \label{eq:right-general}\\
  \Gamma(\sum^k_{i = 1} \A_i) & \supseteq \C +^\star \B^1_1; \label{eq:left-general}\\
  \Gamma(\sum^k_{i=1} \A_i)  & \supseteq \B^j_k +^\star \C +^\star \B^{j+1}_1, \text{ for some }j < m; \label{eq:3}\\
  \Gamma(\sum^k_{i=1} \A_i)  & \supseteq \B^j_\ell +^\star \C +^\star \B^j_{\ell+1}\text{, for some }j \leq m\text{ and }\ell < k. \label{eq:4}
\end{align}
We will show that none of these cases are possible and conclude that $r = 0$.

For Case~(\ref{eq:right-general}), we proceed as in Case~(\ref{case:right}). We may assume
\[\Gamma(\sum^k_{i=1} \A_i) \models \bigwedge_{\ell,k\in\omega} x_\ell < z_k,\]
where $\B^m_k$ and $\C$ are represented by the $\omega$-chains $(x_\ell)_{\ell\in\omega}$ and $(z_\ell)_{\ell\in\omega}$, respectively.
Moreover, we may assume that $\A_i = \sum_{\ell\in\omega} \alpha^\ell_i$ for $1\leq i \leq k$,
and for all $\ell$, $x_\ell \in \Gamma(\alpha^\ell_k)$ and $z_\ell \in \Gamma(\sum^k_{i=1} \alpha^\ell_i)$.
Then we have the following:
\[\alpha^0_1 + \cdots + \alpha^0_{k-1} + \alpha^0_k \Vdash_\Gamma x_0 < z_0,\]
and for the linear order $\D = \alpha^0_1 + \cdots + \alpha^0_{k-1} + \A_k$ of type $\omega$,
\[\Gamma(\D) \models \bigwedge_{\ell\in\omega} x_\ell < z_0.\]
Since $\A_k$ is included in $\D$, it follows that $\Gamma(\A_k)$ is included in $\Gamma(\D)$ and hence
$\Gamma(\D)$ contains a linear order of type $\omega \cdot m + 1$.
We reach a contradiction.

For Case~(\ref{eq:left-general}), we proceed as in Case~(\ref{case:left}). We may assume
\[\Gamma(\sum^k_{i=1} \A_i) \models \bigwedge_{\ell,k\in\omega} z_k < x_\ell,\]
where $\B^1_1$ and $\C$ are represented by the $\omega$-chains ${(x_\ell)}_{\ell\in\omega}$ and ${(z_\ell)}_{\ell\in\omega}$, respectively.
Moreover, we may assume that $\A_i = \sum_{\ell\in\omega} \alpha^\ell_i$ for $1\leq i \leq k$,
and for all $\ell$, $x_\ell \in \Gamma(\alpha^\ell_1)$ and $z_\ell \in \Gamma(\sum^k_{i=1} \alpha^\ell_i)$.
Then we have the following:
\[\sum^k_{i=1}\alpha^0_i \Vdash_\Gamma z_0 < x_0,\]
and for every $\ell > 0$,
\[\alpha^0_1 + \sum^k_{i=1} \alpha^\ell_i \Vdash_\Gamma z_\ell < x_0.\]
It follows that for the linear order $\D = \sum_{\ell\in\omega}\sum^k_{j=1} \alpha^\ell_j$ of type $\omega$,
\[\Gamma(\D) \models \bigwedge_{t\in\omega} z_t < x_0.\]
Since $\A_1$ is included in $\D$, it follows that
$\Gamma(\D)$ contains a linear order of type $\omega \cdot (m+1)$. We reach a contradiction.

For Case~(\ref{eq:3}), we may assume
\begin{equation}
  \label{eq:7}
  \Gamma(\sum^k_{i=1} \A_i) \models \bigwedge_{s,t,v\in\omega} x_s < z_t < y_v,
\end{equation}
where $\B^j_k$, $\C$ and $\B^{j+1}_{1}$ are represented by the $\omega$-chains ${(x_\ell)}_{\ell\in\omega}$, ${(z_\ell)}_{\ell\in\omega}$,
and ${(y_\ell)}_{\ell\in\omega}$, respectively.
Moreover, we may assume that $\A_i = \sum_{\ell\in\omega} \alpha^\ell_i$ for $1\leq i \leq k$,
and for all $\ell$, $x_\ell \in \Gamma(\alpha^\ell_k)$, $y_\ell \in \Gamma(\alpha^\ell_1)$,
and $z_\ell \in \Gamma(\sum^k_{i=1} \alpha^\ell_i)$.
We have that for all $\ell$,
\[\sum^k_{i=1} \alpha^\ell_i \Vdash_\Gamma x_\ell < z_\ell < y_\ell.\]
Consider some indices $s$, $t$, and $v$.
As we already noted, if $s = t = v$, then $\sum^k_{i=1} \alpha^t_i \Vdash_\Gamma x_t < z_t < y_t$.
If $t < s$, then
\[\sum^{k}_{i=1} \alpha^t_i + \alpha^s_k \Vdash_\Gamma x_s < z_t,\]
and if $t > s$, then since $\alpha^s_k + \alpha^t_k \Vdash_\Gamma x_s < x_t$,
we obtain
\[\alpha^s_k + \sum^{k}_{i=1} \alpha^t_i \Vdash_\Gamma x_s < x_t < z_t.\]
Similarly, if $t > v$, then
\[\alpha^v_1 + \sum^{k}_{i=1} \alpha^t_i \Vdash_\Gamma z_t < y_v,\]
and if $t < v$, then since $\alpha^t_1 + \alpha^v_1 \Vdash_\Gamma y_t < y_v$,
we obtain
\[\sum^{k}_{i=1} \alpha^t_i + \alpha^v_1 \Vdash_\Gamma z_t < y_t < y_v.\]
We conclude that for the linear order $\D = \sum_{\ell\in\omega}\sum^k_{j=1} \alpha^\ell_j$ of type $\omega$, we have 
\[\Gamma(\D) \models \bigwedge_{s,t,v\in\omega}x_s < z_t < y_v.\]
In other words,
\[\Gamma(\D) \supseteq \B^j_k +^\star \C +^\star \B^{j+1}_{1}.\]
Since $\A_1, \A_k \subseteq \D$, we also have
\begin{align*}
  \Gamma(\D) & \supseteq \B^1_k + \B^2_k + \cdots + \B^j_k,\\
  \Gamma(\D) & \supseteq \B^{j+1}_1 + \B^{j+2}_1 + \cdots + \B^m_1.
\end{align*}
Combining all of the above, we conclude that
\[\Gamma(\D) \supseteq \B^1_k + \B^2_k + \cdots + \B^j_k +^\star \C +^\star \B^{j+1}_1 + \B^{j+2}_1 + \cdots \B^m_1,\]
which means that $\Gamma(\D)$ contains a copy of $\omega \cdot (m+1)$,
which is a contradiction.

For Case~(\ref{eq:4}), we may assume that
\[\Gamma(\sum^k_{i=1} \A_i) \models \bigwedge_{i,j,k\in\omega} x_i < z_k < y_j,\]
where $\B^j_i$, $\C$ and $\B^{j}_{i+1}$ are represented by the $\omega$-chains ${(x_\ell)}_{\ell\in\omega}$, ${(z_\ell)}_{\ell\in\omega}$,
and ${(y_\ell)}_{\ell\in\omega}$, respectively.
Moreover, we may assume that $\A_i = \sum_{\ell\in\omega} \alpha^\ell_i$ for $1\leq i \leq k$,
and $x_\ell \in \Gamma(\alpha^\ell_i)$, $y_\ell \in \Gamma(\alpha^\ell_{i+1})$,
and $z_\ell \in \Gamma(\sum^k_{i=1} \alpha^\ell_i)$ for all $\ell$.

We will reach a contradiction in a similar manner as in Case~(\ref{case:middle}).
Consider the linear order $\D = \sum_{\ell\in\omega}(\alpha^\ell_i + \alpha^\ell_{i+1})$ of type $\omega$.
Since $\A_i$ and $\A_{i+1}$ are included in $\D$,
by monotonicity, $\Gamma(\D)$ contains both $\B^1_i+\cdots+\B^m_i$ and $\B^1_{i+1} + \cdots + \B^m_{i+1}$.
Since $\Gamma(\D) \cong \omega \cdot m$,
the linear orders $\B^j_i$ and $\B^j_{i+1}$ are merged in $\Gamma(\D)$.
It follows that there is some index $\ell > 0$ such that
\[\alpha^0_{i+1} + \alpha^{\ell+1}_i  \Vdash_\Gamma y_0 < x_{\ell+1} .\]
On the other hand, since $y_0 \in \Gamma(\alpha^0_{i+1})$ and $z_\ell \in \Gamma(\sum^k_{i=1}\alpha^\ell_i)$, we obtain
\[\sum^i_{j=1} \alpha^{\ell}_j + \alpha^0_{i+1} + \sum^k_{j=i+1} \alpha^{\ell}_j \Vdash_\Gamma z_{\ell} < y_0.\]
Moreover, since $x_{\ell+1} \in \Gamma(\alpha^{\ell+1}_i)$, we obtain
\[\sum^i_{j=1} \alpha^\ell_j + \alpha^{\ell+1}_i + \sum^k_{j=i+1}\alpha^\ell_j \Vdash_\Gamma x_{\ell+1} < z_\ell,\]
Combining all of the above, we obtain the following
\[\sum^i_{j=1} \alpha^\ell_j + \alpha^{\ell+1}_i +\alpha^0_{i+1} + \sum^k_{j=i+1}\alpha^\ell_j \Vdash_\Gamma y_0 < x_{\ell+1} < z_\ell < y_0,\]
which is a contradiction.

We considered all possible cases and reached a contradiction in each one of them.
Thus, we proved our main result.

\bibliographystyle{alpha}
\bibliography{pairs}

\newcommand{\etalchar}[1]{$^{#1}$}
\begin{thebibliography}{HTMMM17}

\bibitem[AK90]{AK90}
C.~J. Ash and J.~F. Knight.
\newblock Pairs of recursive structures.
\newblock {\em Ann. Pure Appl. Logic}, 46(3):211--234, 1990.

\bibitem[AK00]{AK00}
C.~J. Ash and J.~F. Knight.
\newblock {\em Computable structures and the hyperarithmetical hierarchy},
  volume 144 of {\em Stud. Logic Found. Math.}
\newblock Elsevier Science B.V., Amsterdam, 2000.

\bibitem[Baz18]{Bazh-MS}
N.~Bazhenov.
\newblock Autostability spectra for decidable structures.
\newblock {\em Math. Struct. Comput. Sci.}, 28(3):392--411, 2018.

\bibitem[BFS20]{BFS-arxiv}
N.~Bazhenov, E.~Fokina, and L.~{San Mauro}.
\newblock Learning families of algebraic structures from informant.
\newblock {\em Inf. Comput.}, 275:104590, 2020.

\bibitem[BGV19]{BGV19CiE}
N.~Bazhenov, H.~Ganchev, and S.~Vatev.
\newblock Effective embeddings for pairs of structures.
\newblock In Daniel Paulusma and Giuseppe Primiero, editors, {\em Computing
  with Foresight and Industry}, volume 11558 of {\em Lect. Notes Comput. Sci.},
  pages 84--95. Springer, 2019.

\bibitem[CCKM04]{CCKM04}
W.~Calvert, D.~Cummins, J.~F. Knight, and S.~Miller.
\newblock Comparing classes of finite structures.
\newblock {\em Algebra Logic}, 43(6):374--392, 2004.

\bibitem[CKM07]{CKM07}
J.~Chisholm, J.~F. Knight, and S.~Miller.
\newblock Computable embeddings and strongly minimal theories.
\newblock {\em J. Symb. Log.}, 72(3):1031--1040, 2007.

\bibitem[dY10]{dBY}
M.~{de Brecht} and A.~Yamamoto.
\newblock Topological properties of concept spaces (full version).
\newblock {\em Inf. Comput.}, 208(4):327--340, 2010.

\bibitem[EPS11]{EPS-11}
{Yu.}~L. Ershov, V.~G. Puzarenko, and A.~I. Stukachev.
\newblock {$HF$}-computability.
\newblock In S.~B. Cooper and A.~Sorbi, editors, {\em Computability in
  Context}, pages 169--242. Imperial College Press, London, 2011.

\bibitem[FKM{\etalchar{+}}11]{FKMQS11}
E.~Fokina, J.~F. Knight, A.~Melnikov, S.~M. Quinn, and C.~Safranski.
\newblock Classes of {U}lm type and coding rank-homogeneous trees in other
  structures.
\newblock {\em J. Symb. Log.}, 76(3):846--869, 2011.

\bibitem[FS89]{FS89}
H.~Friedman and L.~Stanley.
\newblock A {B}orel reducibility theory for classes of countable structures.
\newblock {\em J. Symb. Log.}, 54(3):894--914, 1989.

\bibitem[GHK{\etalchar{+}}05]{GHKMMS}
S.~Goncharov, V.~Harizanov, J.~Knight, C.~McCoy, R.~Miller, and R.~Solomon.
\newblock Enumerations in computable structure theory.
\newblock {\em Ann. Pure Appl. Logic}, 136(3):219--246, 2005.

\bibitem[GKV18]{GKV}
H.~Ganchev, I.~Kalimullin, and S.~Vatev.
\newblock Computable embedding of classes of algebraic structures with
  congruence relation.
\newblock {\em Uchenye Zapiski Kazanskogo Universiteta. Seriya
  Fiziko-Matematicheskie Nauki}, 160(4):731--737, 2018.
\newblock in Russian.

\bibitem[HKSS02]{HKSS02}
D.~R. Hirschfeldt, B.~Khoussainov, R.~A. Shore, and A.~M. Slinko.
\newblock Degree spectra and computable dimensions in algebraic structures.
\newblock {\em Ann. Pure Appl. Logic}, 115(1--3):71--113, 2002.

\bibitem[HTMMM17]{HMMM-17}
M.~Harrison-Trainor, A.~Melnikov, R.~Miller, and A.~Mont\'alban.
\newblock Computable functors and effective interpretability.
\newblock {\em J. Symb. Log.}, 82(1):77--97, 2017.

\bibitem[Kal18]{Kal-18}
I.~{Sh.} Kalimullin.
\newblock Computable embeddings of classes of structures under enumeration and
  {T}uring operators.
\newblock {\em Lobachevskii J. Math.}, 39(1):84--88, 2018.

\bibitem[KMV07]{KMV07}
J.~F. Knight, S.~Miller, and M.~{Vanden Boom}.
\newblock Turing computable embeddings.
\newblock {\em J. Symb. Log.}, 72(3):901--918, 2007.

\bibitem[Mil19]{Miller}
R.~Miller.
\newblock Isomorphism and classification for countable structures.
\newblock {\em Computability}, 8(2):99--117, 2019.

\bibitem[MPSS18]{MPSS-18}
R.~Miller, B.~Poonen, H.~Schoutens, and A.~Shlapentokh.
\newblock A computable functor from graphs to fields.
\newblock {\em J. Symb. Log.}, 83(1):326--348, 2018.

\bibitem[Puz09]{Puz-09}
V.~G. Puzarenko.
\newblock A certain reducibility on admissible sets.
\newblock {\em Sib. Math. J.}, 50(2):330--340, 2009.

\bibitem[Ros82]{Ros82}
J.~G. Rosenstein.
\newblock {\em Linear orderings}, volume~98 of {\em Pure Appl. Math.}
\newblock Academic Press, New York, 1982.

\bibitem[Ros17]{Rossegger}
D.~Rossegger.
\newblock On functors enumerating structures.
\newblock {\em Sib. Elektron. Mat. Izv.}, 14:690--702, 2017.

\end{thebibliography}

\end{document}